\newcommand{%
  \tikzsetnextfilename{}%
  \input{Pictures/.tikz}%
}[1]{%
  \tikzsetnextfilename{#1}%
  \input{Pictures/#1.tikz}%
}
\string\usetikzlibrary{decorations.markings} to use arrow with markings}{}}{}%
\tikzset{
circled/.code={\tikzset{markwith={\draw (0,0) circle (.375ex);}};},
slashed/.code={\tikzset{markwith={\draw[-] (-.4ex,-.4ex) -- (.4ex,.4ex);}};},
markwith/.code={
\pgfutil@ifundefined{tikz@library@decorations.markings@loaded}%
{\pgfutil@packageerror{tikz-cd}{You need to say %
\string\usetikzlibrary{decorations.markings} to use arrow with markings}{}}{}%
\pgfkeysalso{/tikz/postaction={/tikz/decorate,
/tikz/decoration={
markings,
mark = at position 0.5 with
{#1}}}}},
}
\providecommand{\leftsquigarrow}{%
  \mathrel{\mathpalette\reflect@squig\relax}%
}
\newcommand{\reflect@squig}[2]{%
  \reflectbox{$\m@th#1\rightsquigarrow$}%
}
\newcommand{\hourglass}{\rotatebox[origin=c]{90}{$\bowtie$}}
\newcommand{\ie}{{\em i.e.}}
\title{Combinatorial connections in snake graphs: \\
Tilings, lattice paths, and perfect matchings}
\author{Carolina Melo\authornote{1}
}
\email{amelol@unal.edu.co})}
\begin{document}

\maketitle


\begin{abstract}
Snake graphs and their perfect matchings play a key role in the description of cluster variables of cluster algebras associated to surfaces. In this paper, we introduce triangular snake graphs and establish a bijection between their routes (non-intersecting lattice paths), perfect matchings of their underlying snake graphs, and tilings. As an application, we show that the number of perfect matchings in straight snake graphs can be expressed in terms of determinants of Hankel matrices with Catalan number entries. Moreover, we prove that the number of perfect matchings in snake graphs can be expressed as a sum of products of Fibonacci numbers, and we show how Fibonacci and Pell sequences arise from determinants of matrices with Fibonacci entries.
\end{abstract}

\section{Introduction}

In this paper, we introduce triangular snake graphs, a directed acyclic graph structure naturally associated to snake graphs, and use them as a framework to study perfect matchings and their algebraic properties. Our main contributions are:
\begin{itemize}
  \item We establish a bijection between perfect matchings of a snake graph and k-routes (non-intersecting lattice paths) in its associated triangular snake graph, providing a combinatorial interpretation of classical path-counting results.
  \item A connection between snake graphs and determinants of Hankel matrices. We establish a connection between the number of perfect matchings in snake graphs and determinants of path matrices involving Catalan, Fibonacci, and Pell numbers, bridging tiling problems with algebraic identities.
\end{itemize}

\subsection{Overview} \label{def:catalan numbers} 
To place our results in context, we first review classical problems in tiling theory and their connections to combinatorics. Given a region (usually a subset of the Euclidean plane) and a collection of shapes (tiles), a \emph{tiling} of that region is defined as a set of non-overlapping tiles whose union is the region. The study of tilings and their combinatorial properties has been a central topic in discrete mathematics, with deep connections to perfect matchings, continued fractions, and determinant identities; see, for example, \cite{AS10, BEO20, EKLP92, Kas61}. This research often addresses key questions: Is it possible to cover a given region completely using specific tiles? How many distinct ways can such a region be covered without overlapping tiles? Can these tilings be connected to other combinatorial objects through bijections, and what combinatorial properties emerge from these configurations? \par
As the study of larger regions and more complex tiles progresses, the complexity of these questions increases due to the rapid growth in the number of possible configurations. Nevertheless, progress has been made by focusing on specific families of regions and tiles, yielding precise answers to these questions. A well-known example involves regions defined as subsets of the plane, with tiles being $1\times 2$ rectangles (commonly referred to as dominoes or domino tiles), placed on lattice points. The problem of counting the number of tilings (also called domino tilings) within a given region has been widely studied in mathematics; see, for example, \cite{FT61, Kas61}. For instance, the number of domino tilings for a $2\times n$ rectangle corresponds to the $(n+1)$-th Fibonacci number; see \cite{KP80}.

Moreover, combinatorial proofs based on tilings have been used to establish and extend a variety of algebraic identities. Examples include connections with Fibonacci determinants \cite{BCQ05}, Hankel determinants \cite{EF05}, and the Fibonacci and Pell numbers \cite{KS09}. The work of Elkies, Kuperberg, Larsen, and Propp in \cite{EKLP92} has been foundational in this area. They introduced a family of planar regions called Aztec diamonds and studied their tilings with dominoes. Among their key contributions is the Aztec diamond theorem, which states that the number of domino tilings for this shape is exactly $2^{n(n+1)/2}$. Furthermore, they developed a method to transform tiling problems into the framework of counting paths on graphs. Central to this approach is the concept of a \textit{$k$-route}, which represents a set of $k$ non-intersecting lattice paths. \par
Building on the work of Gessel and Viennot \cite{GV89}, Eu and Fu established in \cite{EF05} connections between determinants of \textit{Hankel matrices} (square matrices in which each ascending skew-diagonal from left to right is constant) and the enumeration of $k$-tuples of non-intersecting large and small Schröder paths. These bijections establish a link between the domino tilings of Aztec diamonds and non-intersecting lattice paths. An alternative approach to the Aztec diamond theorem, distinct from the methods introduced by Elkies, Kuperberg, Larsen, and Propp, is presented in \cite{Ciu96}. In this work, Ciucu obtains the recurrence relation $a_n =2^{n}a_{n-1}$ using perfect matchings of cellular graphs. In general, a domino tiling of a region corresponds to a perfect matching in the grid graph obtained by placing a vertex at the center of each square of the region and connecting two vertices whenever their corresponding squares are adjacent.\par
A \textit{matching} in a graph is defined as a subset of independent edges, \ie, edges that share no common vertices. This concept has been widely studied in mathematics and computer science due to its broad applications, including graph coloring, flow networks, and neural networks; see, for example, \cite{LP86}. However, several open problems remain, such as finding a closed formula for the number of matchings in a graph. In particular, determining the number of perfect matchings, denoted as $m(G)$, in an arbitrary graph $G$ remains an unresolved problem. While exact closed formulas are generally unavailable, asymptotic approximations and exact results exist for specific classes of graphs; see, for example, \cite{Bre73, LP86}.\par 
On the other hand, in the context of cluster algebras associated to surfaces, Musiker, Schiffler, and Williams in \cite{MSW11, MSW13} established a connection between cluster variables in cluster algebras and perfect matchings of snake graphs (connected planar graphs consisting of a finite sequence of square tiles). Their work provided a formula for computing cluster variables combinatorially. Recently, following the ideas of Eu and Fu, we showed in \cite{Mel19} how to associate a perfect matching of a ladder graph (or straight snake graph, where all tiles lie in a single column or row) to a $k$-route formed by Schröder paths. In this paper, we extend these ideas by constructing a bijection between perfect matchings and non-intersecting lattice paths, revealing new determinant identities involving some numerical sequences. \par 


We define the region $T(\mathcal{G})$, referred to as the snake graph cover, as the union of all unit squares centered at the vertices of a snake graph $\mathcal{G}$ with $d$ square tiles. Using the domino tilings of $T(\mathcal{G})$, the triangular snake graph $\mathcal{T}_\mathcal{G}$ is defined as an acyclic-directed graph with $d$ triangular tiles. Alternatively, employing the edge contraction operation, we provide an alternative method to construct $\mathcal{T}_\mathcal{G}$ (see Definition~\ref{def:contracted_snake_graph}). Leveraging Lemma~\ref{le:equality_of_sources_and_sinks} and Definition~\ref{def:k-vertices}, the set of $k$-paths $p_i$ from $s_i$ to $t_i$, where ${s_1, \dots, s_k}$ is the set of sources and ${t_1, \dots, t_k}$ is the set of sinks, can be considered. One may then pose the question: How many of these sets of $k$ paths are $k$-routes? The following synthesis encapsulates Proposition~\ref{prop:snake_graph_tilings_and_perfect_matchings}, Lemma~\ref{lemma:Perfect_matchings_to_routes}, and Theorem~\ref{the:Perfect_matchings_and_Routes} below, providing an answer to this question.  \label{principal results 2} 

\begin{theorem}Let $\mathcal{G}$ be a snake graph. 
\begin{enumerate}[(1)]
    \item There exists a bijection between the domino tilings of the region $T(\mathcal{G})$ and the set $\text{Match}(\mathcal{G})$ of perfect matchings of $\mathcal{G}$. 
    \item The set $\text{Match}(\mathcal{G})$ is in bijection with the set of $k$-routes from $s$ to $t$ in $\mathcal{T}_\mathcal{G}$, where $s=(s_1,\dots,s_k)$ and $t=(t_1,\dots,t_k)$.
\end{enumerate}
\end{theorem}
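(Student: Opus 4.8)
The plan is to prove the two parts by constructing explicit bijections, matching the three results (Proposition~\ref{prop:snake_graph_tilings_and_perfect_matchings}, Lemma~\ref{lemma:Perfect_matchings_to_routes}, and Theorem~\ref{the:Perfect_matchings_and_Routes}) that the statement synthesizes. For part (1), I would follow the classical duality between domino tilings and perfect matchings. Since $T(\mathcal{G})$ is by definition the union of unit squares centered at the vertices of $\mathcal{G}$, placing a vertex at the center of each such unit square and joining the centers of adjacent squares recovers precisely the graph $\mathcal{G}$. Under this identification, each domino in a tiling of $T(\mathcal{G})$ covers exactly two adjacent squares, hence corresponds to a single edge of $\mathcal{G}$. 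The first step is to verify that this assignment sends a tiling to a perfect matching: the non-overlap condition translates into the selected edges being vertex-disjoint, while covering all of $T(\mathcal{G})$ translates into every vertex of $\mathcal{G}$ being incident to exactly one chosen edge. The inverse map assigns to each matching edge the domino covering the two corresponding squares, and checking that the two maps are mutually inverse is routine once the adjacency structures are aligned; the only care needed is at the boundary of $T(\mathcal{G})$, where one must confirm that no square admits a domino placement outside the edge set of $\mathcal{G}$.

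For part (2), I would treat the two component results separately. First, for the matchings-to-routes direction (Lemma~\ref{lemma:Perfect_matchings_to_routes}), I would describe how a perfect matching $M$ of $\mathcal{G}$ determines a system of directed paths in $\mathcal{T}_\mathcal{G}$. The idea is that the edges left exposed by $M$ trace out directed paths in the triangular snake graph, each running from a source $s_i$ to a corresponding sink $t_i$. The perfect-matching condition --- every vertex covered exactly once --- should translate into the statement that at each interior vertex of $\mathcal{T}_\mathcal{G}$ exactly one incoming and one outgoing edge is used, so the collection of paths is well-defined and passes through each triangular tile consistently. For the reverse direction (Theorem~\ref{the:Perfect_matchings_and_Routes}), I would exhibit the inverse: given a $k$-route, read off from each directed edge whether the corresponding edge of $\mathcal{G}$ belongs to the matching, and verify that the resulting edge set is again a perfect matching.

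The hard part will be proving that the paths produced from a matching are genuinely non-intersecting, i.e. that they form a $k$-route rather than an arbitrary $k$-tuple of paths. This is where the planarity and the specific sequential (snake) structure of $\mathcal{G}$ must be exploited: two matching-induced paths can cross only if two edges of $M$ share a vertex, which contradicts the independence of matching edges. I would make this precise by ordering the sources $s_1 < \dots < s_k$ and the sinks $t_1 < \dots < t_k$ compatibly with the planar embedding, and arguing that a crossing of the $i$-th and $j$-th paths would force two dominoes (equivalently two matching edges) to overlap, which is impossible. Establishing that this ordering is consistent --- and that the equality of the number of sources and sinks guaranteed by Lemma~\ref{le:equality_of_sources_and_sinks} matches each source to a unique sink --- is the crux. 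Once non-intersection is secured, bijectivity follows by checking that the two constructions are mutually inverse, which reduces to a local tile-by-tile verification in $\mathcal{T}_\mathcal{G}$.
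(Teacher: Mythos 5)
Part (1) of your proposal is essentially the paper's own proof of Proposition~\ref{prop:snake_graph_tilings_and_perfect_matchings} (dominoes correspond to matching edges, non-overlap to independence of edges, covering to perfection), so nothing is missing there. Part (2), however, contains a genuine gap: your correspondence runs in the wrong direction, and as stated it fails. In the paper's bijection (Lemma~\ref{lemma:Perfect_matchings_to_routes} together with Theorem~\ref{the:Perfect_matchings_and_Routes}), the arrows of the $k$-route correspond to edges that \emph{are in} the matching: each matching edge becomes a domino of the associated tiling, each non-empty decorated domino carries one arrow of type $(2,0)$, $(1,1)$ or $(1,-1)$, and the paths are the concatenations of these arrows. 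In the contracted-graph language of \S~\ref{sec:edge_contraction}, the matching attached to a route $R$ is the set of arrows lying on a path of $R$ \emph{together with} the contracted edges $\mathbf{e}$ whose vertices lie on no path of $R$. Your description, in which ``the edges left exposed by $M$ trace out directed paths,'' is not an equivalent convention. For the three-tile snake graph of Example~\ref{ex:contracted snake graph}, take the perfect matching formed by the south edge of $G_1$, the west edge of $G_2$, and the south and north edges of $G_3$. Its complement, read inside $\mathcal{T}_\mathcal{G}$, consists of the five arrows $s_1\to\mathbf{e}$, $\mathbf{e}\to t_1$, $s_2\to\mathbf{e}'$, $\mathbf{e}\to\mathbf{e}'$, $\mathbf{e}'\to t_2$, where $\mathbf{e}$ and $\mathbf{e}'$ are the two contraction vertices; this set has a vertex of out-degree two (and one of in-degree two), so it is not a disjoint union of source-to-sink paths at all. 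For the same reason, your local claim that ``at each interior vertex of $\mathcal{T}_\mathcal{G}$ exactly one incoming and one outgoing edge is used'' is false: a contraction vertex lies on no path of the route precisely when its contracted edge belongs to the matching, and then no arrow through it is used. Consequently the inverse map you propose, which reconstructs the matching from the route's arrows alone, does not output a perfect matching: both endpoints of every off-route contracted edge would remain unmatched.

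There is a second, independent gap. Your non-intersection argument (a crossing forces two dominoes, hence two matching edges, to overlap) only rules out two \emph{distinct} paths sharing a vertex; it does not rule out a \emph{single} path running from $s_i$ to $t_j$ with $i\neq j$, i.e.\ a path passing through the neck of an hourglass subgraph (Definition~\ref{def:hourglass graph}). The paper excludes this possibility not by an overlap argument but by parity: each of the three possible neck-crossing configurations shown in Figure~\ref{fig:Non-perfect_matchings} forces an edge configuration in $\mathcal{G}$ that leaves at least one unmatched vertex on each side, contradicting the assumption that $P$ is perfect. Without this step, the pairing of each source $s_i$ with the sink $t_i$ of the same index --- which you yourself identify as the crux, and which Lemma~\ref{le:equality_of_sources_and_sinks} alone does not provide --- is not established.
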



Çanakçi and Schiffler established in \cite{CS18} that the number of perfect matchings of $\mathcal{G}$ is equal to the numerator of the continued fraction associated with $\mathcal{G}$. As a consequence, item $(1)$ of the preceding theorem provides a method for determining the number of domino tilings within the region $T(\mathcal{G})$. Moreover, by the Lindström-Gessel-Viennot lemma, which states that the number of $k$-routes from $s$ to $t$ in $\mathcal{T}_\mathcal{G}$ equals the determinant of the corresponding path matrix $M_{st}$ (as seen in Lemma~\ref{lem:LGV}), item $(2)$ implies that this determinant can be interpreted as the numerator of the associated continued fraction. Conversely, the continued fraction associated to $\mathcal{G}$ can also be expressed as a quotient of determinants of path matrices. \par 
To establish an explicit connection between snake graphs and Hankel matrices, it is necessary to describe how these combinatorial structures are linked through the study of path matrices and matchings. This exploration reveals a relationship between the entries of such path matrices and well-known numerical sequences, such as the Fibonacci sequence and the Catalan numbers. \par
The Catalan number sequence, denoted by $C$, is one of the most important sequences in combinatorics. It has been extensively studied and is associated with over 200 different combinatorial objects; see \cite{Sta15}. This sequence appears in the On-Line Encyclopedia of Integer Sequences (or OEIS) \cite{Sloane} with the identifier $A000108$ and is recursively defined as follows:
\[
C_0=1, \quad C_{n+1}=\sum_{i=0}^{n}C_iC_{n-i}, \quad \text{with} \quad n\geq 0.
\]

Examples of combinatorial objects counted by the Catalan numbers include triangulations of convex polygons, binary trees, and Dyck paths; see \cite[Corollary 6.2.3]{Sta99}. \par
Similarly, the Fibonacci sequence, denoted by $F$, is a well-known sequence defined recursively by:
\[
F_0 = 0, \quad F_1 = 1, \quad F_{n+1} = F_n + F_{n-1}, \quad \text{for} \quad n > 0.
\]
Fibonacci numbers have a wide range of applications in mathematics, computer science, and natural phenomena. They appear in areas such as the golden ratio, population dynamics, and the structure of plants; see, for example, \cite{Kos01}. \par
We explore the combinatorial nature of both Catalan and Fibonacci numbers by connecting them to tiling problems, perfect matchings on snake graphs and Hankel matrices. By extending the concepts of $k$-routes and perfect matchings, we establish a relationship between these sequences and determinants of matrices associated with snake graphs. In particular, we show how Hankel matrices can be used to study these combinatorial sequences, providing explicit connections between their entries and the number of perfect matchings in specific graph structures.  The following synthesis, encompassing Propositions~\ref{prop:Catalan_Fibonacci},~\ref{prop:Ladder_Fibonacci}, and~\ref{prop:Hankel matrix of Catalan numbers}, highlights these relationships:

\begin{theorem} Let $C$ be the sequence of Catalan numbers $C_n$, and let $F_n$ be the $n$-th Fibonacci number. Then the following relationship holds:
\[
F_n=\det(H(C))=\det (M_{st}), \]
where $H(C)$ is a matrix defined in terms of the Hankel matrices of the Catalan numbers, and $M_{st}=(m_{ij})$ is the path matrix associated to the triangular snake graph $\mathcal{T}_{L_{n-2}}$ which corresponds to the vertical ladder graph $L_{n-2}$.
\end{theorem}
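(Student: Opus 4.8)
The plan is to establish the two equalities separately, using the fact that both determinants evaluate to the number of perfect matchings of the vertical ladder $L_{n-2}$, which is the Fibonacci number $F_n$. I would organize the argument around the three constituent statements (Propositions~\ref{prop:Catalan_Fibonacci},~\ref{prop:Ladder_Fibonacci}, and~\ref{prop:Hankel matrix of Catalan numbers}), treating the matching count of the ladder as the common hinge through which both determinants are computed.

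First I would prove that $m(L_{n-2}) = F_n$ (Proposition~\ref{prop:Ladder_Fibonacci}). For a straight snake graph, conditioning on whether the terminal vertical edge of the last tile lies in a given perfect matching produces the standard deletion/contraction dichotomy: if that edge is used one reduces to $L_{n-3}$, and if it is not, a forced pair of edges reduces to $L_{n-4}$. This yields the recursion $m(L_d) = m(L_{d-1}) + m(L_{d-2})$, and with the base values $m(L_1) = 2 = F_3$ and $m(L_2) = 3 = F_4$ this is exactly the Fibonacci recursion shifted so that $m(L_{n-2}) = F_n$. Next, for the equality $F_n = \det(M_{st})$, I would invoke Lemma~\ref{lem:LGV}: the Lindström–Gessel–Viennot lemma expresses $\det(M_{st})$ as the signed sum over families of $k$ paths from the sources $s=(s_1,\dots,s_k)$ to the sinks $t=(t_1,\dots,t_k)$ in $\mathcal{T}_{L_{n-2}}$. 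Because $\mathcal{T}_{L_{n-2}}$ is planar and its sources and sinks are ordered so that the only permutation admitting a nonintersecting family is the identity, this signed sum collapses to the plain count of $k$-routes. By Theorem~\ref{the:Perfect_matchings_and_Routes} these $k$-routes are in bijection with $\text{Match}(L_{n-2})$, so $\det(M_{st}) = m(L_{n-2}) = F_n$.

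It remains to show $\det(H(C)) = \det(M_{st})$, which is the heart of Proposition~\ref{prop:Hankel matrix of Catalan numbers}. Here I would compute the entries $m_{ij}$ explicitly: each entry of the path matrix counts the monotone lattice paths joining a fixed source $s_i$ to a fixed sink $t_j$ in the triangular ladder, and these paths are naturally encoded as Dyck paths and hence enumerated by Catalan numbers. I would then verify that the resulting array is precisely the matrix $H(C)$ assembled from Hankel blocks of the $C_n$, up to the determinant-preserving row and column operations used in its construction, giving $\det(H(C)) = \det(M_{st})$. As an independent confirmation of Proposition~\ref{prop:Catalan_Fibonacci}, one can evaluate $\det(H(C))$ intrinsically: a cofactor expansion along the last block, combined with the classical Hankel evaluations $\det(C_{i+j}) = \det(C_{i+j+1}) = 1$, reduces the determinant to the recurrence $\det(H_n) = \det(H_{n-1}) + \det(H_{n-2})$, which again matches $F_n$.

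The main obstacle is the final step: pinning down the entries $m_{ij}$ as the correct Catalan numbers. This requires a careful analysis of the source and sink labelling (Definition~\ref{def:k-vertices}) together with the geometry of lattice paths in $\mathcal{T}_{L_{n-2}}$, so that the encoding by Dyck paths is exact and the path matrix aligns column-for-column with the Hankel construction $H(C)$. Once the entries are identified, the LGV reduction and the determinant recursion are both routine; it is the combinatorial identification of the path counts with the Catalan entries where the real work lies.
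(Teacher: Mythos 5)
Your handling of the equality $F_n=\det(M_{st})$ is correct and is essentially the paper's own argument: the tile recursion gives $m(L_{n-2})=F_n$, Lemma~\ref{lem:LGV} turns $\det(M_{st})$ into a count of $k$-routes in $\mathcal{T}_{L_{n-2}}$, and Theorem~\ref{the:Perfect_matchings_and_Routes} identifies those routes with $\text{Match}(L_{n-2})$; this is exactly how Propositions~\ref{prop:Ladder_Fibonacci} and~\ref{prop:Hankel matrix of Catalan numbers} proceed (the paper additionally records the explicit entries of $M_{st}$ by inspecting the graph).

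The genuine gap is your third step. The path matrix of $\mathcal{T}_{L_{n-2}}$ does \emph{not} have Catalan entries and is not $H(C)$ in disguise: the triangular snake graph of a vertical ladder is a stack of hourglass graphs in which every source-to-sink path has length at most two, so the number of paths from $s_i$ to $t_j$ is $2$ or $3$ when $i=j$, $1$ when $|i-j|=1$, and $0$ otherwise --- precisely the sparse tridiagonal matrix with entries $F_3=2$, $F_4=3$, $F_2=1$, $F_0=0$ appearing in Propositions~\ref{prop:Ladder_Fibonacci} and~\ref{prop:Hankel matrix of Catalan numbers}. There are no Dyck paths to encode, and no row/column operations relating $M_{st}$ to $H(C)$ are exhibited (asserting their existence because the determinants agree would be circular). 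In the paper, the Catalan numbers arise as path counts in a \emph{different} acyclic directed graph, the Aztec-diamond-style lattice of Figure~\ref{fig:Catalan acyclic-directed graph general}, where the number of paths from $s_i$ to $t_j$ is $C_{i+j-2}+C_{i+j-1}$; hence its path matrix is $H_k(C)+H_k^{\prime}(C)$ (and $H_k(C)+H_k^{\prime}(C)-E_{k,k}$ in the even case). Proposition~\ref{prop:Catalan_Fibonacci} applies Lemma~\ref{lem:LGV} in \emph{that} graph and identifies its $k$-routes with perfect matchings of the ladder via the Eu--Fu correspondence (matchings $\leftrightarrow$ decorated Aztec tilings $\leftrightarrow$ non-intersecting Schr\"oder paths). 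So $\det H(C)=\det M_{st}$ holds because both determinants count the same set, namely $\text{Match}(L_{n-2})$, through two different graphs --- not because one matrix can be carried to the other. Your proposed ``independent confirmation'' by cofactor expansion also does not stand: determinants are not additive, so $\det\bigl(H_k(C)+H_k^{\prime}(C)\bigr)$ cannot be reduced to the classical evaluations $\det H_k(C)=\det H_k^{\prime}(C)=1$ together with a Fibonacci recurrence without a substantial extra argument; producing that count is exactly what the LGV/Aztec-diamond step supplies.
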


Additionally, Proposition~\ref{prop:Ladder_Fibonacci} and
Proposition~\ref{prop:Hankel matrix of Catalan numbers} not only address Hankel matrices with entries from the sequence of Catalan numbers but also introduce matrices whose entries are Fibonacci numbers, with determinants matching those of the earlier Hankel matrices. In \S~\ref{sec:Some identities in not straight snake graphs} we explore this property in greater generally, showing that for snake graphs, the number of perfect matchings can be expressed as a sum of products of Fibonacci numbers, described by the determinant of a matrix with entries given in terms of Fibonacci numbers.

\subsection{Organization}
The paper is structured as follows. In \S~\ref{sec:Background} we provide the background on lattice paths, Aztec diamonds, Hankel matrices, and snake graphs, and set up the notation that will be used throughout the text. In \S~\ref{sec:Perfect matchings, tilings and non-intersecting lattice paths} we give a bijection between non-intersecting lattice paths (or routes) in an acyclic-directed graph and perfect matchings of the snake graph. We also introduce the triangular snake graph $\mathcal{T}_\mathcal{G}$ and describe how each perfect matching corresponds to a $k$-route of $\mathcal{T}_\mathcal{G}$ (\S~\ref{sec:Lattice_paths_and_perfect_matchings}). Finally, in \S~\ref{sec:Fibonacci identities and Hankel matrices} we provide key identities in terms of Fibonacci numbers that allow the computation of determinants of matrices, either Hankel matrices associated with Catalan numbers in the case of straight snake graphs, or more generally, determinants of path matrices.




\section{Background} \label{sec:Background}

\subsection{Lattice paths and Aztec diamonds}\label{subsection:Aztec diamonds and Schröder paths} 

Let $S$ be a subset of $\mathbb{Z}^d$. A \emph{lattice path} $L$ in $\mathbb{Z}^d$ of length $k$ with steps in $S$ is a sequence $v_0, v_1,\cdots , v_k \in \mathbb{Z } ^d$ such that each consecutive difference $v_i - v_{i-1}$ belongs to $S$. $L$ is said to start at $v_0$ and end at $v_k$. The enumeration of these paths often leads to interesting numerical sequences. For example, the set of \emph{Dyck paths} $\mathcal{D}_n$ consists of all lattice paths in $\mathbb{Z}^2$ from $(0, 0)$ to $(2n, 0)$ for some $n> 0$, using steps $S=\{(1,1), (1,- 1)\}$ without crossing below the $x$-axis. Their number corresponds to the $n$-th Catalan number. The following $5$ lattice paths make up $\mathcal{D}_3$. \par \bigskip

\begin{center}
\begin{tikzpicture}[y=.3cm, x=.3cm,font=\normalsize, scale=1.5]


\foreach \x in {0,1,2}{
\draw[->, >=latex,gray] (\x,\x) -- (\x+1,\x+1);
\draw[->, >=latex,gray] (3+\x,3-\x) -- (3+\x+1,3-\x-1);
} 
\foreach \x in {0,1}{\draw[->, >=latex,gray] (\x+10,\x) -- (\x+1+10,\x+1);}
\foreach \x in {1,2}{\draw[->, >=latex,gray] (3+\x+10,3-\x) -- (3+\x+1+10,3-\x-1);}
\draw[->, >=latex,gray] (12,2) -- (13,1);
\draw[->, >=latex,gray] (13,1) -- (14,2);

\draw[->, >=latex,gray] (-5,-3) -- (-4,-2);
\draw[->, >=latex,gray] (-4,-2) -- (-3,-3);
\draw[->, >=latex,gray] (-3,-3) -- (-2,-2);
\draw[->, >=latex,gray] (-2,-2) -- (-1,-1);
\draw[->, >=latex,gray] (-1,-1) -- (0,-2);
\draw[->, >=latex,gray] (0,-2) -- (1,-3);
\draw[->, >=latex,gray] (15,-3) -- (16,-2);
\draw[->, >=latex,gray] (16,-2) -- (17,-1);
\draw[->, >=latex,gray] (17,-1) -- (18,-2);
\draw[->, >=latex,gray] (18,-2) -- (19,-3);
\draw[->, >=latex,gray] (19,-3) -- (20,-2);
\draw[->, >=latex,gray] (20,-2) -- (21,-3);
\draw[->, >=latex,gray] (5,-3) -- (6,-2);
\draw[->, >=latex,gray] (6,-2) -- (7,-3);
\draw[->, >=latex,gray] (7,-3) -- (8,-2);
\draw[->, >=latex,gray] (8,-2) -- (9,-3);
\draw[->, >=latex,gray] (9,-3) -- (10,-2);
\draw[->, >=latex,gray] (10,-2) -- (11,-3);
\end{tikzpicture}
\end{center} \par \bigskip

Similarly, the (large) Schröder number $r_n$ counts the number of lattice paths from $(0, 0)$ to $(2n, 0)$, for some $n> 0$, using steps $S=\{(1,1), (1,- 1), (2,0)\}$ and staying above the $x$-axis. For a fixed $n$, let $S_n$ denote the set of these \emph{Schröder paths}. The following $6$ lattice paths constitute $S_2$. \par \bigskip

\begin{center}
\begin{tikzpicture}[y=.3cm, x=.3cm,font=\normalsize, scale=1.5]
\foreach \x in {0,1}{
\draw[->, >=latex,gray] (\x,\x) -- (\x+1,\x+1);
\draw[->, >=latex,gray] (2+\x,2-\x) -- (2+\x+1,2-\x-1);
} 
\draw[->, >=latex,gray] (8,0) -- (10,0);
\draw[->, >=latex,gray] (10,0) -- (12,0);

\draw[->, >=latex,gray] (16,0) -- (18,0);
\draw[->, >=latex,gray] (18,0) -- (19,1);
\draw[->, >=latex,gray] (19,1) -- (20,0);

\draw[->, >=latex,gray] (0,-4) -- (1,-3);
\draw[->, >=latex,gray] (1,-3) -- (2,-4);
\draw[->, >=latex,gray] (2,-4) -- (4,-4);

\draw[->, >=latex,gray] (8,-4) -- (9,-3);
\draw[->, >=latex,gray] (9,-3) -- (10,-4);
\draw[->, >=latex,gray] (10,-4) -- (11,-3);
\draw[->, >=latex,gray] (11,-3) -- (12,-4);

\draw[->, >=latex,gray] (16,-4) -- (17,-3);
\draw[->, >=latex,gray] (17,-3) -- (19,-3);
\draw[->, >=latex,gray] (19,-3) -- (20,-4);
\end{tikzpicture}
\end{center} \par \bigskip

These numbers are intimately connected to tiling problems. The \emph{Aztec diamond} of order $n$, denoted by $Az(n)$, is defined as the union of all unit squares whose corners have integer coordinates $(x,y)$ that satisfy $|x|+|y|\leq n+1$. A \emph{domino tile} is a rectangle of size 1 by 2 or 2 by 1 with corners with integer coordinates. A \emph{domino tiling} of $Az(n)$ is a set of non-overlapping dominoes whose union is $Az(n)$.
In \cite{EKLP92}, it was first proven that the number of domino tilings for the Aztec diamond $Az(n)$ is $2^{n(n+1)/2}$. This significant outcome, commonly referred to as the Aztec Diamond Theorem, has been proved in different ways. One such proof was presented by Eu and Fu in \cite{EF05} using Hankel determinants of the large and small Schröder numbers. This proof relies on a bijection connecting domino tilings of an Aztec diamond to non-intersecting lattice paths. The essential idea behind this bijection involves labeling the rows of $Az(n)$ with numbers from $1$ to $2n$ (from bottom to top). For each $1\leq i\leq n$, a path $p_i$ is defined, starting from the center of the left edge of the $i$-th row and ending at the center of the right edge of the same row. Whenever the path reaches a high domino, it traverses through the domino, switching from top to bottom or vice versa. For wide dominoes, the path moves horizontally. The dominoes above the highest path are exclusively wide. Therefore, to establish a correspondence between Aztec diamonds and Schröder paths, specific decorated dominoes are introduced. Each domino is depicted as follows: \par \bigskip

\begin{center}
\begin{tikzpicture}[domain=0:4, scale=0.9]
  \draw (-5,1) -- (-3,1);
  \draw (-5,1) -- (-5,0);
  \draw (-3,0) -- (-3,1);
  \draw (-3,0) -- (-5,0);
  
  \draw (-1,1) -- (1,1);
  \draw (-1,1) -- (-1,0);
  \draw (1,0) -- (1,1);
  \draw[red] (-1,0.5) -- (1,0.5);
  \draw (1,0) -- (-1,0);
 
  \draw (3,1) -- (4,1);
  \draw (3,1) -- (3,-1);
  \draw (4,-1) -- (3,-1);
  \draw (4,-1) -- (4,1);
  \draw[fill][red] (3,-0.5) -- (4,0.5);
  
  \draw (6,1) -- (7,1);
  \draw (6,1) -- (6,-1);
  \draw (7,-1) -- (6,-1);
  \draw (7,-1) -- (7,1);
  \draw[red] (6,0.5) -- (7,-0.5);
  \end{tikzpicture}
\end{center}

\subsection{Hankel matrices and lattice paths}\label{subsec:Hankel_path}
A well-known problem in linear algebra is the study of \emph{Hankel matrices}, which are $n\times n$ matrices $B=(b_{ij})$ such that, for $i\leq j$, we have $b_{ij}=b_{i+k,j-k}$, for all $k\in [j-i]$. In combinatorics specifically, we study Hankel matrices associated with a sequence $\mathcal{B}=\{b_n\}_{n\in\mathbb{N}}$, which are defined in \cite[p.53]{Ard15} as \par
\[
H_n(\mathcal{B}) =
\left( \begin{array}{cccc}
 b_0 & b_1 & \cdots & b_{n-1} \\ 
 b_1 & b_2 & \cdots & b_{n} \\
 \vdots & \vdots & \ddots & \vdots \\
 b_{n-1} & b_{n} & \cdots & b_{2n-2}
\end{array} \right)   \quad  \text{and} \quad  
H_n^{\prime}(\mathcal{B}) =
\left( \begin{array}{cccc}
 b_1 & b_2 & \cdots & b_n \\ 
 b_2 & b_3 & \cdots & b_{n+1} \\
 \vdots & \vdots & \ddots & \vdots \\
 b_n & b_{n+1} & \cdots & b_{2n-1}
\end{array} \right).
\]  \par \bigskip

In particular, the \textit{Hankel determinant} of order $n$ of $\mathcal{B}$ (sometimes called the \textit{Hankel transform}), is the determinant of the corresponding Hankel matrix of order $n$. The Hankel
determinant was first introduced in OEIS with Sloane’s sequence A055878 and later studied by Layman \cite{Lay01}. Many interesting properties of Hankel determinants are known. For instance, for the Catalan sequence, we obtain that \par
\[ 
\det \; H_n^{\prime}(C) =
\left| \begin{array}{cccc}
 C_1 & C_2 & \cdots & C_n \\ 
 C_2 & C_3 & \cdots & C_{n+1} \\
 \vdots & \vdots & \ddots & \vdots \\
 C_n & C_{n+1} & \cdots & C_{2n-1}
\end{array} \right| =1.
\] \par \bigskip

To find some properties for the determinants of the Hankel matrices associated with the sequence of Catalan numbers we can use the Lindström-Gessel-Viennot lemma. To state this lemma, we first introduce the concepts of routes and path matrices. Let $ G = (V, E) $ be a directed graph and let $ n \in \mathbb {N} $. A \emph{$n$-vertex} $ s=(s_1, \dots, s_n) $ of $ G $ is an $n$-tuple where $ s_i \in V $, for all $ i \in [n] $. If $ s = (s_1, \dots, s_n) $ and $ t = (t_1, \dots, t_n) $ are $n$-vertices of $ G $, define a \emph{$ n$-path} (also called \emph{path system}) $ P = (p_1, \dots, p_n) $ from $ s $ to $ t $, where $ p_i $ is a path from $ s_i $ to $ t_i $. 

If the paths $p_i$ and $ p_j $ do not have vertices in common, for $ 1 \leq i, j \leq n $, they are  \emph{vertex-disjoint paths}. The $n$-path $ R = (p_1, \dots, p_n) $ is a \emph{$n$-route} if $ p_i $ and $ p_j $ are vertex-disjoint, for all $ i \neq j $. Let $A$ be a commutative ring and let $w: E \to A$ be a weight function on the edges of a directed graph $G$.  

The \emph{weight} of a $n$-route $R$, denoted as $w(R)$, is the product of the weights of the edges in the paths:
\[
w(R)=\prod_{i=1}^n \prod_{e\in p_i} w(e).
\]

\label{def:path matrix}The \emph{path matrix} $ M_{st} = (m_{ij})_ {1 \leq i, j \leq n}$ between two $n$-vertices $ s = (s_1, \dots, s_n)$ and $ t = (t_1, \dots, t_n)$ is defined as follows:
\[
m_{ij}=\sum_{p:s_i\to t_j}w(p).
\]

In particular, if $w(e)=1$ for all edges $e\in E$ (unweighted graph), the path matrix $ M_{st} = (m_{ij})_ {1 \leq i, j \leq n}$ from $ s = (s_1, \dots, s_n)$ to $ t = (t_1, \dots, t_n)$ is such that $m_{ij}$ is equal to the number of paths from $s_i$ to $t_j$.

\begin{lemma}[Lindström \cite{Lin73}, Gessel-Viennot {\cite[Theorem 1]{GV89}}] \label{lem:LGV}
Let $ G $ be an acyclic directed unweighted graph, and let $ s = (s_1, \dots, s_n) $ and $ t = (t_1, \dots, t_n) $ be two $n$-vectors of $ G $, then
\[
\det \; M_{st}=|\{R: R \; \text{is  a  $n$-route from $s$ to $t$}\}|,
\]
where $ M_{st} $ is the path matrix between $ s $ and $ t $.
\end{lemma}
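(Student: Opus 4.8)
The plan is to prove the full signed expansion of the determinant first and then cancel all the ``crossing'' contributions by a sign-reversing involution, leaving exactly the vertex-disjoint path systems. First I would expand $\det M_{st}$ by the Leibniz formula,
\[
\det M_{st} = \sum_{\sigma \in S_n} \mathrm{sgn}(\sigma) \prod_{i=1}^n m_{i,\sigma(i)},
\]
and use the definition of $m_{ij}$ as the number of paths from $s_i$ to $t_j$ to rewrite the product $\prod_i m_{i,\sigma(i)}$ as the number of path systems $P = (p_1,\dots,p_n)$ with $p_i\colon s_i \to t_{\sigma(i)}$. This turns the determinant into a signed sum over all pairs $(\sigma, P)$ in which $P$ realizes the endpoint permutation $\sigma$, namely $\det M_{st} = \sum_{(\sigma, P)} \mathrm{sgn}(\sigma)$.

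Next I would split these pairs into those whose path system is vertex-disjoint and those in which at least two paths share a vertex, and construct a sign-reversing involution $\phi$ on the latter ``intersecting'' set. Given an intersecting system, the involution is defined canonically: take the least index $i$ for which $p_i$ meets another path, let $v$ be the first vertex along $p_i$ (traversing from $s_i$) that lies on some $p_j$ with $j \neq i$, choose the least such $j$, and exchange the portions of $p_i$ and $p_j$ after $v$. This swaps the endpoints $t_{\sigma(i)}$ and $t_{\sigma(j)}$, so the associated permutation changes by the transposition $(\sigma(i)\,\sigma(j))$ and its sign flips, while the new system is still intersecting at $v$; hence $\phi$ pairs each term with one of opposite sign and all intersecting contributions cancel.

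What remains are the vertex-disjoint pairs $(\sigma, P)$. Here I would invoke the acyclicity together with the ordered arrangement of the sources $s = (s_1,\dots,s_n)$ and sinks $t = (t_1,\dots,t_n)$ relevant to $\mathcal{T}_\mathcal{G}$: in this configuration any non-identity permutation forces two of the paths to share a vertex, so vertex-disjointness is possible only when $\sigma = \mathrm{id}$. Every surviving term then contributes $\mathrm{sgn}(\mathrm{id}) = +1$, and the vertex-disjoint systems with $\sigma = \mathrm{id}$ are precisely the $n$-routes from $s$ to $t$, giving $\det M_{st} = |\{R : R \text{ is an } n\text{-route from } s \text{ to } t\}|$.

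I expect the main obstacle to be the careful verification that $\phi$ is well defined and genuinely an involution. I must check that the ``first crossing'' data is preserved under the swap: since $p_i$ is disjoint from every other path before reaching $v$, the rerouted path $p_i'$ still meets no path before $v$, so its first crossing vertex is again $v$; the set of indices of paths through $v$ is unchanged, so the least index distinct from $i$ is again $j$; and any $p_k$ with $k<i$, being disjoint from both old $p_i$ and old $p_j$, remains disjoint from the spliced paths, so $i$ is still the least intersecting index. These three facts force $\phi^2 = \mathrm{id}$, and acyclicity guarantees the spliced objects are again legitimate directed paths with the stated endpoints. The secondary delicate point is justifying that disjointness forces $\sigma = \mathrm{id}$, which is exactly where the ordering hypothesis on $s$ and $t$ enters and where the ``count'' form of the lemma, rather than the general signed form, is obtained.
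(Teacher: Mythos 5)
The paper does not prove this lemma at all: it is imported as a classical result of Lindstr\"om and Gessel--Viennot, and everything built on it (Lemma~\ref{lemma:Perfect_matchings_to_routes}, Theorem~\ref{the:Perfect_matchings_and_Routes}, the determinant computations of \S~\ref{sec:Fibonacci identities and Hankel matrices}) treats it as a black box. So the only meaningful comparison is with the cited literature, and your proposal is essentially that standard proof. Its core is correct: the Leibniz expansion, the reinterpretation of $\prod_{i} m_{i,\sigma(i)}$ as a count of path systems realizing $\sigma$, and the tail-swap involution $\phi$, including the three preservation checks (least intersecting index, first crossing vertex, least partner index) that give $\phi^2=\mathrm{id}$, and the observation that acyclicity forces spliced walks to be simple paths with the stated endpoints.

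The one genuine caveat is your final step. What the involution proves, for an arbitrary acyclic digraph and arbitrary $n$-vertices, is the signed identity $\det M_{st}=\sum_{(\sigma,P)}\mathrm{sgn}(\sigma)$, the sum running over vertex-disjoint systems $P$ realizing any permutation $\sigma$. The unsigned statement in the lemma follows only under the additional hypothesis that the identity is the sole permutation realizable by a disjoint system; without it the statement is simply false (take two sources joined by edges only to the two ``crossed'' sinks: the determinant is $-1$ while there are no $2$-routes). You are right that this is exactly where the ordering of $s$ and $t$ must enter, but your appeal to ``the configuration of $\mathcal{T}_\mathcal{G}$'' is a placeholder rather than an argument, and strictly speaking it imports a hypothesis that the lemma as stated (for arbitrary $G$) does not contain --- an imprecision inherited from the paper's own formulation. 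To finish, one must show that in the configurations actually used (the triangular snake graphs, and the Dyck-path grid of \S~\ref{subsec:Hankel_path}) the sources and sinks lie on the boundary of a planar embedding in compatible order, so any disjoint system realizing $\sigma\neq\mathrm{id}$ would contain two paths forced to cross and hence to share a vertex. Supplying that planarity argument is the only missing piece; with it, your proof is complete and coincides with the argument in the sources the paper cites.
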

An example relating the previous concepts is given by the graph below with $n$-vertices $ s = (s_1, \dots, s_n) $ and $ t = (t_1, \dots, t_n) $

\begin{center}
\begin{tikzpicture}[y=.3cm, x=.3cm,font=\normalsize, scale=0.5,rotate=-45]

\draw[gray,dashed] (29,14) -- (26,11);
\draw[gray,dashed] (-4,-19) -- (-1,-16);
\draw[->, >=latex] (-5,15) -- (0,15);

\foreach \y in {-15,-10,...,10}
 {\draw[gray,dashed] (-4,\y) -- (-1,\y);}
 
\foreach \y in {-10,-5,...,15,20}
 {\draw[->, >=latex] (-5,\y-10) -- (-5,\y-5);} 
 
\foreach \x in {0,5,...,25}
 {\draw[gray,dashed] (\x,14) -- (\x,11);} 

  \foreach \x in {0,5,...,25}
 {         \draw[->, >=latex] (\x,15) -- (\x+5,15);    
         } 
         
  \foreach \x in {0,5,...,20}
 {         \draw[->, >=latex] (\x,10) -- (\x+5,10);
           \draw[->, >=latex] (\x,5) -- (\x,10);
      
         } 
         
  \foreach \x in {0,5,...,15}
 {         \draw[->, >=latex] (\x,5) -- (\x+5,5);
           \draw[->, >=latex] (\x,0) -- (\x,5);
      
         } 
           
  \foreach \x in {0,5,10}
 {         \draw[->, >=latex] (\x,0) -- (\x+5,0);
           \draw[->, >=latex] (\x,-5) -- (\x,0);
      
         } 
         
  \foreach \x in {0,5}
 {         \draw[->, >=latex] (\x,-5) -- (\x+5,-5);
           \draw[->, >=latex] (\x,-10) -- (\x,-5);
      
         }          

 \draw[->, >=latex] (0,-10) -- (5,-10);
 \draw[->, >=latex] (0,-15) -- (0,-10);

\node (4) at (32,15) {$t_n$};
\node (5) at (27,10) {$t_3$};
\node (6) at (22,5) {$t_2$};
\node (7) at (17,0) {$t_1$};
\node (8) at (10,-7) {$s_1$};
\node (9) at (5,-12) {$s_2$};
\node (10) at (0,-17) {$s_3$};
\node (11) at (-5,-22) {$s_n$};
\end{tikzpicture}
\end{center}

According to the picture, the paths from $ s_i $ to $ t_k $ are Dyck paths, in this way, the path matrix is $ M_{st}=(m_{i j}) $, where $ m_{ij} = C_{i + j-1} $, which is the Hankel matrix $ H_n^{\prime} (C) $ associated to the sequence $ C $ of Catalan numbers. Additionally, only the $ n$-route is the following

\begin{center}
\begin{tikzpicture}[y=.3cm, x=.3cm,font=\normalsize, scale=0.5,rotate=-45]

\draw[gray,dashed] (29,14) -- (26,11);
\draw[gray,dashed] (-4,-19) -- (-1,-16);

\foreach \y in {-15,-10,...,10}
 {\draw[gray,dashed] (-4,\y) -- (-1,\y);}
 
\foreach \y in {-10,-5,...,15,20}
 {\draw[->, >=latex,red] (-5,\y-10) -- (-5,\y-5);} 
 
\foreach \x in {0,5,...,25}
 {\draw[gray,dashed] (\x,14) -- (\x,11);} 

  \foreach \x in {-5,0,5,...,25}
 {         \draw[->, >=latex,red] (\x,15) -- (\x+5,15);    
         } 
         
  \foreach \x in {0,5,...,20}
 {         \draw[->, >=latex,red] (\x,10) -- (\x+5,10);
         } 
  \foreach \x in {5,10,...,20}
 {   \draw[->, >=latex,gray] (\x,5) -- (\x,10);}
 \draw[->, >=latex,red] (0,5) -- (0,10);
         
  \foreach \x in {5,10,15}
 {         \draw[->, >=latex,red] (\x,5) -- (\x+5,5);} 
\draw[->, >=latex,gray] (0,5) -- (5,5);

\draw[->, >=latex,red] (0,0) -- (0,5);
\draw[->, >=latex,red] (5,0) -- (5,5);
\draw[->, >=latex,gray] (10,0) -- (10,5);
\draw[->, >=latex,gray] (15,0) -- (15,5);

  \foreach \x in {0,5}
 {         \draw[->, >=latex,gray] (\x,0) -- (\x+5,0);
           \draw[->, >=latex,red] (\x,-5) -- (\x,0);
      
         } 
      \draw[->, >=latex,red] (10,-5) -- (10,0);
      \draw[->, >=latex,red] (10,0) -- (15,0);
         
  \foreach \x in {0,5}
 {         \draw[->, >=latex,gray] (\x,-5) -- (\x+5,-5);
           \draw[->, >=latex,red] (\x,-10) -- (\x,-5);
      
         }          

 \draw[->, >=latex,gray] (0,-10) -- (5,-10);
 \draw[->, >=latex,red] (0,-15) -- (0,-10);

\node (4) at (32,15) {$t_n$};
\node (5) at (27,10) {$t_3$};
\node (6) at (22,5) {$t_2$};
\node (7) at (17,0) {$t_1$};
\node (8) at (10,-7) {$s_1$};
\node (9) at (5,-12) {$s_2$};
\node (10) at (0,-17) {$s_3$};
\node (11) at (-5,-22) {$s_n$};
\end{tikzpicture}
\end{center}

Therefore, using the Lemma~\ref{lem:LGV}, we conclude that $\det \; H_n^{\prime}(C) = 1$.

\subsection{Perfect matchings and snake graphs}\label{subsection:Perfect matchings and snake graphs}
In \cite{CS13} and \cite{CS18} the authors introduced a useful relationship between the continued fraction associated with a snake graph, the number of perfect matchings of such graph and a cluster algebra. These relationships became a tool to simplify some problems related to each of these objects, and they are one of the main interests of the present work. 

\label{def:perfect matchings} Let $G=(V, E)$ be a graph with $|V|=n$ and let $S\subset E$. The set $S$ is said to be a \emph{matching} on $G$, if these edges do not have vertices in common. If each vertex in $G$ is incident to an edge of $S$, then $S$ is a \emph{perfect matching} and $|S|=\frac{n}{2}$. We denote by $\text{Match} (G)$ the
set of perfect matchings of $G$. 

We consider a tile $ G $ as a graph that has the following form:
\begin{center}
\begin{tikzpicture}[y=.3cm, x=.3cm,font=\normalsize, scale=1]
\draw[gray] (0,0) -- (0,5);
\draw[gray] (5,0) -- (5,5);
\draw[gray] (5,5) -- (0,5);
\draw[gray] (0,0) -- (5,0);

\filldraw[fill=white!40,draw=black!80] (0,0) circle (3pt);
\filldraw[fill=white!40,draw=black!80] (0,5) circle (3pt);
\filldraw[fill=white!40,draw=black!80] (5,0) circle (3pt);
\filldraw[fill=white!40,draw=black!80] (5,5) circle (3pt);

\filldraw(4.6,5.8)     node[anchor=east] {$North$};
\filldraw(4.6,-0.8)     node[anchor=east] {$South$};
\filldraw(8.5,2.5)     node[anchor=east] {$East$};
\filldraw(0,2.5)     node[anchor=east] {$West$};

\filldraw(2.5,2.5)     node[anchor=center] {$G$};

\end{tikzpicture}
\end{center}

\label{def:snake graph} A \emph{snake graph} $\mathcal{G}$ is a connected planar graph consisting of a finite sequence of tiles $G_1, G_2, \dots, G_d$, with $d \geq 1$, in such a way that for each $i \in [d-1]$, the $G_i$ and $G_{i + 1}$ tiles share exactly one edge $e_i$, which is the North edge of $G_i$ and the South edge of $G_{i + 1}$, or the East edge of $G_i$ and the West edge of $G_{i + 1}$.  The edges $e_i$ that are contained in two tiles are called \emph{interior edges}. For a snake graph made up of tiles $G_1, G_2, \dots, G_d$, the interior edges are $e_1, e_2, \dots, e_{d-1}$. All of the other edges are called \emph{boundary edges}. 

Each snake graph $\mathcal{G}$ admits a sign function $f: E (\mathcal{G}) \longrightarrow \{+,-\}$ such that for each tile $ G_i $ in $\mathcal {G}$ the following conditions are met:

\begin{itemize}
     \item The north and west edges of $G_i$ have the same sign,
     \item The south and east edges of $G_i$ have the same sign,
     \item The sign of the north edge is opposite to the sign of the south edge.
\end{itemize}

These signs determine a continued fraction associated to $G$, encoding combinatorial properties of its perfect matchings. If $\mathcal{G}$ is a snake graph with $d$ tiles, then we define the sequence of signs
\[
[f(e_0), f (e_1), \cdots, f(e_{d-1}), f(e_d)],
\]
where $e_1, \cdots, e_{d-1}$ are the interior edges of $\mathcal{G}$, $e_0$ is the south edge of $G_1$ and $e_d$ is the north edge of $G_d$.
If $\epsilon \in \{+, - \}$ we have 
\begin{equation} \label{eq:sign sequence}
    [f(e_0), f(e_1), \cdots, f(e_{d-1}), f(e_d)] = [ \underbrace{\epsilon, \cdots, \epsilon}_{a_1}, \underbrace{- \epsilon, \cdots, - \epsilon}_{a_2}, \cdots, \underbrace {\pm \epsilon, \cdots, \pm \epsilon}_{a_n}],
\end{equation}

then the continued fraction $ [a_1, \cdots, a_n] $ associated to $ \mathcal{G}$ is obtained, where \label{def:continued fraction} 
\[
\displaystyle{[a_1, \cdots, a_n]}=\displaystyle{a_1+\frac{1}{a_2+\displaystyle{\frac{1}{a_3+\displaystyle{\frac{1}{\ddots+\displaystyle{\frac{1}{a_n}}}}}}}}.
\]

\label{def:snake graph continued fraction}Now let $[a_1, \dots, a_n]$ be a positive continued fraction (\ie, if each $a_i \in \mathbb{Z}_{>0}$). The snake graph associated to the continued fraction $[a_1, \dots, a_n]$ is denoted by $\mathcal{G}[a_1, \dots, a_n]$ and is the snake graph with $d=a_1+a_2+\cdots+ a_n - 1$ tiles determined by the sign sequence \ref{eq:sign sequence}. The following theorem relates the continued fractions of a snake graph with its perfect matchings.
\begin{theorem}[\cite{CS18}] \label{frac}
If $m(\mathcal{G})$ denotes the number of perfect matchings of $\mathcal{G}$ then
$$[a_1, a_2, \cdots, a_n]=\frac{m(\mathcal{G}[a_1, a_2, \cdots, a_n])} {m(\mathcal{G}[a_2, \cdots, a_n ])}.$$
\end{theorem}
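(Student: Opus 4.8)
The plan is to prove the equivalent arithmetic statement that the number of perfect matchings is the \emph{continuant} of the block sequence, i.e. the numerator of the continued fraction. Define $K$ by $K(\,)=1$, $K(a_n)=a_n$, and
\[
K(a_1,\dots,a_n)=a_1\,K(a_2,\dots,a_n)+K(a_3,\dots,a_n).
\]
From the standard matrix form of continued fractions one has $[a_1,\dots,a_n]=K(a_1,\dots,a_n)/K(a_2,\dots,a_n)$, so the theorem follows once I show $m(\mathcal{G}[a_1,\dots,a_n])=K(a_1,\dots,a_n)$, under the convention $m(\mathcal{G}[\,])=1$ (the empty snake graph is a single edge). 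I would proceed by induction on the number of blocks $n$: writing $[a_1,\dots,a_n]=a_1+1/[a_2,\dots,a_n]$ and feeding in the inductive hypothesis $[a_2,\dots,a_n]=m(\mathcal{G}[a_2,\dots,a_n])/m(\mathcal{G}[a_3,\dots,a_n])$, the two denominators cancel against the continuant recursion and the whole identity collapses to the single combinatorial recursion
\[
m(\mathcal{G}[a_1,\dots,a_n])=a_1\, m(\mathcal{G}[a_2,\dots,a_n])+m(\mathcal{G}[a_3,\dots,a_n]).
\]

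The engine for this recursion is the elementary vertex-deletion identity for matchings: for any edge $e=uv$ of a graph $G$,
\[
m(G)=m(G-\{u,v\})+m(G-e),
\]
where the first term counts matchings containing $e$ and the second counts those avoiding it. I would apply it at the free outer edge of the first tile of $\mathcal{G}=\mathcal{G}[a_1,\dots,a_n]$. Since a maximal run of equal signs corresponds to a maximal staircase of turning tiles, the first block of length $a_1$ is an initial staircase attached straight to a copy of $\mathcal{G}[a_2,\dots,a_n]$. Taking $e$ to be the distinguished boundary edge of the first tile $G_1$ and splitting on whether $e$ lies in the matching, one case deletes $G_1$ and shortens the first block by one, producing $\mathcal{G}[a_1-1,a_2,\dots,a_n]$, while the other case forces the two opposite edges of $G_1$ into the matching and collapses $G_1$ together with the interface to the next block, producing $\mathcal{G}[a_2,\dots,a_n]$. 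This yields the master single-tile recursion
\[
m(\mathcal{G}[a_1,a_2,\dots,a_n])=m(\mathcal{G}[a_1-1,a_2,\dots,a_n])+m(\mathcal{G}[a_2,\dots,a_n]),
\]
which, iterated down from $a_1$ to the boundary case $a_1=1$ and combined with the degenerate evaluation $m(\mathcal{G}[1,a_2,\dots,a_n])=m(\mathcal{G}[a_2,\dots,a_n])+m(\mathcal{G}[a_3,\dots,a_n])$, reproduces both the displayed block recursion and (with the sequence $a_2,\dots,a_n$ empty) the base case $m(\mathcal{G}[a_1])=a_1$ for the pure staircase.

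I expect the main obstacle to be the geometric identification of the two reduced graphs, and in particular the verification that the deletion performed at the \emph{first turn} of the snake genuinely returns the snake graphs $\mathcal{G}[a_1-1,a_2,\dots,a_n]$ and $\mathcal{G}[a_2,\dots,a_n]$ on the nose. This requires tracking the sign function through the deletion: I must check that removing the relevant tiles and vertices yields a graph whose south/north boundary edges and interior signs again satisfy the sign conditions of Equation~\eqref{eq:sign sequence}, so that the reduced sign sequence is exactly the truncation claimed. The within-block steps are routine, but at the block boundary the tile attaches by a turn rather than straight, and there the forced-edge propagation and the relabeling of the new boundary edge $e_0$ must be handled with care; getting this bookkeeping right for both parities of the sign $\epsilon$ is where the real content of the argument lies.
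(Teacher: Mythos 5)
The paper contains no proof of Theorem~\ref{frac} to compare against: it is imported verbatim from \cite{CS18} as background, so your argument can only be judged on its own merits and against the standard proof in that reference. On those terms your proposal is correct, and it is essentially the Çanakçi--Schiffler argument: reduce the theorem, via continuants and induction on $n$, to the identity $m(\mathcal{G}[a_1,\dots,a_n])=a_1\,m(\mathcal{G}[a_2,\dots,a_n])+m(\mathcal{G}[a_3,\dots,a_n])$, and prove that identity by peeling matchings off the initial block. Two points in your sketch need sharpening, and they are exactly the ones you flag. First, when the split edge $e_0$ lies in the matching and $a_1\ge 2$ (so $G_1$ is a turning tile), the result is not a collapse of ``$G_1$ together with the interface'': forcing the south and north edges of $G_1$ starts a chain of degree-two vertices that propagates through \emph{every} turning tile of the first block and stops only at the straight tile $G_{a_1}$, so all $a_1$ tiles of the block are consumed and the residual graph is the snake graph on tiles $G_{a_1+1},\dots,G_d$, which is $\mathcal{G}[a_2,\dots,a_n]$; for $a_1\ge 3$ your phrasing underestimates the cascade, though the displayed recursion is the right one. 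Second, which case produces which reduced graph swaps according to whether $G_1$ is turning or straight: for $a_1=1$ it is the case $e_0\notin P$ that cascades (now through the \emph{second} block, yielding $\mathcal{G}[a_3,\dots,a_n]$), while $e_0\in P$ simply deletes $G_1$ and yields $\mathcal{G}[a_2,\dots,a_n]$. This is why your ``degenerate evaluation'' $m(\mathcal{G}[1,a_2,\dots,a_n])=m(\mathcal{G}[a_2,\dots,a_n])+m(\mathcal{G}[a_3,\dots,a_n])$ is a genuinely separate verification rather than a formal specialization of the master recursion. With those two cascades checked, together with the harmless reflection needed because the reduced snake graph's initial boundary edge sits on the west rather than the south side (so the sign conditions of \eqref{eq:sign sequence} are met after relabeling), your induction closes and the proof is complete.
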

As an application of Theorem~\ref{frac}, we note that the numerator of the continued fraction of $[a_1, \dots, a_n]$  corresponds to the number of perfect matchings in the snake graph $\mathcal{G}[a_1, \dots, a_n]$. Similarly, the numerator of $[a_n, \dots , a_1]$ represents the number of perfect matchings in $\mathcal{G}[a_n, \dots , a_1]$. Since these two snake graphs are identical up to a $180$-degree rotation, they must have the same number of perfect matchings. This leads to the following result.
\begin{theorem}[\cite{CS18}]\label{Continued fraction rotation}
The continued fractions $[a_1, \dots , a_n]$ and $[a_n, \dots , a_1]$ have the same numerator.
\end{theorem}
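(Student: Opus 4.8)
The plan is to exploit the snake-graph description of the numerator supplied by Theorem~\ref{frac}, together with the fact that reversing a continued fraction corresponds to rotating its snake graph by $180^\circ$. By Theorem~\ref{frac}, writing $[a_1,\dots,a_n]=m(\mathcal{G}[a_1,\dots,a_n])/m(\mathcal{G}[a_2,\dots,a_n])$, the numerator of $[a_1,\dots,a_n]$ is exactly the number of perfect matchings $m(\mathcal{G}[a_1,\dots,a_n])$; likewise the numerator of $[a_n,\dots,a_1]$ is $m(\mathcal{G}[a_n,\dots,a_1])$. Thus the theorem reduces to the purely combinatorial claim that $m(\mathcal{G}[a_1,\dots,a_n])=m(\mathcal{G}[a_n,\dots,a_1])$.

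First I would analyze the effect of reversing the continued fraction on the defining sign sequence. By \ref{eq:sign sequence}, the snake graph $\mathcal{G}[a_1,\dots,a_n]$ is determined by the sign sequence $[f(e_0),\dots,f(e_d)]$ whose maximal runs of equal sign have lengths $a_1,a_2,\dots,a_n$ and strictly alternate in sign. Reading this sequence from right to left produces a new sequence whose runs still strictly alternate but now have lengths $a_n,\dots,a_2,a_1$ (possibly after a global sign flip, which leaves the snake graph unchanged), so the reversed sequence is precisely the sign sequence defining $\mathcal{G}[a_n,\dots,a_1]$. Next I would translate this reversal into a geometric symmetry of the planar graph: each interior edge $e_i$ is the shared edge between $G_i$ and $G_{i+1}$, glued as a North/South or an East/West pair according to the sign change at position $i$, so reversing the sign sequence reverses the order of the tiles $G_1,\dots,G_d$ while swapping North with South and East with West. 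This is exactly the action of a half-turn of the plane, whence $\mathcal{G}[a_n,\dots,a_1]$ is the image of $\mathcal{G}[a_1,\dots,a_n]$ under a $180^\circ$ rotation. Being a planar isometry, this rotation is a graph isomorphism and hence induces a bijection $\text{Match}(\mathcal{G}[a_1,\dots,a_n])\to\text{Match}(\mathcal{G}[a_n,\dots,a_1])$, giving $m(\mathcal{G}[a_1,\dots,a_n])=m(\mathcal{G}[a_n,\dots,a_1])$ and completing the argument.

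The step I expect to be the main obstacle is verifying rigorously that reversing the sign sequence realizes the $180^\circ$ rotation rather than some other symmetry. One has to track the gluing data carefully: confirm that the alternation of runs is preserved under reversal, that a global sign flip of $f$ indeed leaves the snake graph unchanged (so the parity of $n$ causes no problem), and that the North/South versus East/West type of each interior gluing transforms correctly under the half-turn.

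As a cleaner alternative that sidesteps the geometry, I could instead prove the equality algebraically. The numerator of $[a_1,\dots,a_n]$ is the continuant $K(a_1,\dots,a_n)$, and the matrix identity
\[
\left( \begin{smallmatrix} K(a_1,\dots,a_n) & K(a_1,\dots,a_{n-1}) \\ K(a_2,\dots,a_n) & K(a_2,\dots,a_{n-1}) \end{smallmatrix} \right)
= \left( \begin{smallmatrix} a_1 & 1 \\ 1 & 0 \end{smallmatrix} \right) \cdots \left( \begin{smallmatrix} a_n & 1 \\ 1 & 0 \end{smallmatrix} \right)
\]
yields the mirror symmetry $K(a_1,\dots,a_n)=K(a_n,\dots,a_1)$ immediately: each factor is symmetric, so transposing the product reverses the order of the factors while fixing the $(1,1)$-entry, which is the numerator on each side. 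I would present the snake-graph rotation argument as the primary proof, since it is the combinatorial viewpoint developed in this paper, and note the continuant identity as a quick independent confirmation.
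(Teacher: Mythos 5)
Your proposal follows essentially the same route as the paper: the paper also invokes Theorem~\ref{frac} to identify each numerator with the number of perfect matchings of the corresponding snake graph, and then observes that $\mathcal{G}[a_1,\dots,a_n]$ and $\mathcal{G}[a_n,\dots,a_1]$ coincide up to a $180^\circ$ rotation, so their matching counts agree. Your extra care with the sign-sequence reversal and the continuant identity $K(a_1,\dots,a_n)=K(a_n,\dots,a_1)$ are sound additions, but they elaborate on rather than depart from the paper's argument.
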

Observe that if $[a_1, \dots , a_n]$ is a continued fraction with  $a_n=1$, then $[a_1, \dots , a_n]=[a_1, \dots , a_{n-1}+1]$. This property, along with Theorem~\ref{Continued fraction rotation}, will be used in \S~\ref{sec:Some identities in not straight snake graphs}. 

\section{Perfect matchings, tilings and non-intersecting lattice paths}\label{sec:Perfect matchings, tilings and non-intersecting lattice paths}
In this section, we will construct an acyclic-directed graph corresponding to each snake graph. This construction will establish a bijection between the routes (non-intersecting lattice paths) in the resulting acyclic-directed graph and the perfect matchings of the snake graph. This construction was motivated by the bijection between domino tilings of an Aztec diamond 
and non-intersecting lattice paths (large Schröder paths satisfying some particular conditions) given by Eu and Fu in \cite{EF05}. 

\subsection{Tilings and snake graphs}\label{sec:Lattice_paths_and_perfect_matchings}
Without loss of generality and for an easier exposition, let us consider a snake graph $\mathcal{G}$ formed by a sequence of unit squares $G_1, G_2,\dots, G_{d}$, where $d\geq 1$.

\begin{definition}\label{def:Snake graph cover}
The \emph{cover} $T(G_i)$ of tile $G_i$ in a snake graph $\mathcal{G}$ is the union of all unit squares centered at the vertices of $G_i$. The \emph{snake graph cover} $T(\mathcal{G})$ of a snake graph $\mathcal{G}$ is the union of the covers of all tiles $G_i$ in $\mathcal{G}$.

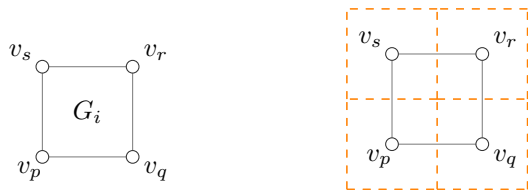
\begin{figure}[ht]
\begin{center}
\begin{tikzpicture}[y=.3cm, x=.3cm,font=\normalsize, scale=0.8]
\draw[gray] (0,0) -- (0,5);
\draw[gray] (5,0) -- (5,5);
\draw[gray] (5,5) -- (0,5);
\draw[gray] (0,0) -- (5,0);

\filldraw[fill=white!40,draw=black!80] (0,0) circle (3pt);
\filldraw[fill=white!40,draw=black!80] (0,5) circle (3pt);
\filldraw[fill=white!40,draw=black!80] (5,0) circle (3pt);
\filldraw[fill=white!40,draw=black!80] (5,5) circle (3pt);

\filldraw(0.5,-0.8)     node[anchor=east] {$v_{p}$};
\filldraw(7.5,-0.8)     node[anchor=east] {$v_{q}$};
\filldraw(7.5,5.8)     node[anchor=east] {$v_{r}$};
\filldraw(0,5.8)     node[anchor=east] {$v_{s}$};

\filldraw(2.5,2.5)     node[anchor=center] {$G_i$};

\end{tikzpicture}
\hspace{2cm} \begin{tikzpicture}[y=.3cm, x=.3cm,font=\normalsize, scale=0.8]
\draw[gray] (0,0) -- (0,5);
\draw[gray] (5,0) -- (5,5);
\draw[gray] (5,5) -- (0,5);
\draw[gray] (0,0) -- (5,0);

\filldraw[fill=white!40,draw=black!80] (0,0) circle (3pt);
\filldraw[fill=white!40,draw=black!80] (0,5) circle (3pt);
\filldraw[fill=white!40,draw=black!80] (5,0) circle (3pt);
\filldraw[fill=white!40,draw=black!80] (5,5) circle (3pt);

\filldraw(0.5,-0.8)     node[anchor=east] {$v_{p}$};
\filldraw(7.5,-0.8)     node[anchor=east] {$v_{q}$};
\filldraw(7.5,5.8)     node[anchor=east] {$v_{r}$};
\filldraw(0,5.8)     node[anchor=east] {$v_{s}$};

\draw[orange, dashed] (-2.5,-2.5) -- (-2.5,2.5);
\draw[orange, dashed] (2.5,-2.5) -- (2.5,2.5);
\draw[orange, dashed] (2.5,2.5) -- (-2.5,2.5);
\draw[orange, dashed] (-2.5,-2.5) -- (2.5,-2.5);

\draw[orange, dashed] (7.5,-2.5) -- (7.5,2.5);
\draw[orange, dashed] (2.5,-2.5) -- (7.5,-2.5);

\draw[orange, dashed] (-2.5,2.5) -- (-2.5,7.5);
\draw[orange, dashed] (2.5,2.5) -- (2.5,7.5);
\draw[orange, dashed] (2.5,7.5) -- (-2.5,7.5);

\draw[orange, dashed] (7.5,2.5) -- (7.5,7.5);
\draw[orange, dashed] (7.5,7.5) -- (2.5,7.5);
\draw[orange, dashed] (2.5,2.5) -- (7.5,2.5);

\draw[orange, dashed] (-2.5,-2.5) -- (-2.5,2.5);
\draw[orange, dashed] (2.5,-2.5) -- (2.5,2.5);
\draw[orange, dashed] (2.5,2.5) -- (-2.5,2.5);
\draw[orange, dashed] (-2.5,-2.5) -- (2.5,-2.5);

\draw[orange, dashed] (7.5,-2.5) -- (7.5,2.5);
\draw[orange, dashed] (2.5,-2.5) -- (7.5,-2.5);

\draw[orange, dashed] (-2.5,2.5) -- (-2.5,7.5);
\draw[orange, dashed] (2.5,2.5) -- (2.5,7.5);
\draw[orange, dashed] (2.5,7.5) -- (-2.5,7.5);

\draw[orange, dashed] (7.5,2.5) -- (7.5,7.5);
\draw[orange, dashed] (7.5,7.5) -- (2.5,7.5);
\draw[orange, dashed] (2.5,2.5) -- (7.5,2.5);
\end{tikzpicture}
\end{center}
\caption{From left to right, tile $G_i$ of a snake graph $\mathcal{G}$ and its cover $T(G_i)$.}
\end{figure}
\end{definition}
The unit square centered at the southwest vertex of the tile $G_1$ is called the \emph{initial tile} of $T(\mathcal{G})$.
\begin{definition}\label{def:snake graph tiling}
A \emph{domino tile} is formed by the union of any two unit squares sharing an edge. A \emph{snake graph tiling} of $\mathcal{G}$ is a set of non-overlapping domino tiles whose union forms $T(\mathcal{G})$. We denote the set of all snake graph tilings of $\mathcal{G}$ by $\text{Til}(\mathcal{G})$.
\end{definition}

\begin{example}\label{ex:SnakeGraphTilings} Consider the snake graph 
\begin{center}
\begin{tikzpicture}[scale=1.2]

\filldraw(-1,1) node[anchor=center] {$\mathcal{G} \quad =$};

\draw[gray] (0,0) -- (0,1);
\draw[gray] (1,0) -- (1,1);
\draw[gray] (1,1) -- (0,1);
\draw[gray] (0,0) -- (1,0);

\filldraw(0.5,0.5)     node[anchor=center] {$G_1$};

\draw[gray] (0,1) -- (0,2);
\draw[gray] (1,1) -- (1,2);
\draw[gray] (1,2) -- (0,2);
\draw[gray] (0,1) -- (1,1);

\filldraw(0.5,1.5)     node[anchor=center] {$G_{2}$};

\draw[gray] (2,1) -- (2,2);
\draw[gray] (1,2) -- (2,2);
\draw[gray] (2,1) -- (1,1);

\filldraw(1.5,1.5)     node[anchor=center] {$G_{3}$};

\filldraw[fill=white!40,draw=black!80] (0,0) circle (2pt);
\filldraw[fill=white!40,draw=black!80] (1,0) circle (2pt);
\filldraw[fill=white!40,draw=black!80] (0,1) circle (2pt);
\filldraw[fill=white!40,draw=black!80] (1,1) circle (2pt);
\filldraw[fill=white!40,draw=black!80] (2,1) circle (2pt);
\filldraw[fill=white!40,draw=black!80] (0,2) circle (2pt);
\filldraw[fill=white!40,draw=black!80] (1,2) circle (2pt);
\filldraw[fill=white!40,draw=black!80] (2,2) circle (2pt);
\end{tikzpicture}
\end{center}

The snake graph cover $T(\mathcal{G})$ corresponding to it is
\begin{center}
\begin{tikzpicture}[scale=1.2]

\draw[gray] (0,0) -- (0,1);
\draw[gray] (1,0) -- (1,1);
\draw[gray] (1,1) -- (0,1);
\draw[gray] (0,0) -- (1,0);


\draw[gray] (0,1) -- (0,2);
\draw[gray] (1,1) -- (1,2);
\draw[gray] (1,2) -- (0,2);
\draw[gray] (0,1) -- (1,1);


\draw[gray] (2,1) -- (2,2);
\draw[gray] (1,2) -- (2,2);
\draw[gray] (2,1) -- (1,1);


\filldraw[fill=white!40,draw=black!80] (0,0) circle (2pt);
\filldraw[fill=white!40,draw=black!80] (1,0) circle (2pt);
\filldraw[fill=white!40,draw=black!80] (0,1) circle (2pt);
\filldraw[fill=white!40,draw=black!80] (1,1) circle (2pt);
\filldraw[fill=white!40,draw=black!80] (2,1) circle (2pt);
\filldraw[fill=white!40,draw=black!80] (0,2) circle (2pt);
\filldraw[fill=white!40,draw=black!80] (1,2) circle (2pt);
\filldraw[fill=white!40,draw=black!80] (2,2) circle (2pt);

\draw[orange, dashed] (-0.5,-0.5) -- (1.5,-0.5);
\draw[orange, dashed] (-0.5,0.5) -- (2.5,0.5);
\draw[orange, dashed] (-0.5,1.5) -- (2.5,1.5);
\draw[orange, dashed] (-0.5,2.5) -- (2.5,2.5);

\draw[orange, dashed] (-0.5,-0.5) -- (-0.5,2.5);
\draw[orange, dashed] (0.5,-0.5) -- (0.5,2.5);
\draw[orange, dashed] (1.5,-0.5) -- (1.5,2.5);
\draw[orange, dashed] (2.5,0.5) -- (2.5,2.5);

\draw[orange, dashed] (-0.5,-0.5) -- (1.5,-0.5);
\draw[orange, dashed] (-0.5,0.5) -- (2.5,0.5);
\draw[orange, dashed] (-0.5,1.5) -- (2.5,1.5);
\draw[orange, dashed] (-0.5,2.5) -- (2.5,2.5);

\draw[orange, dashed] (-0.5,-0.5) -- (-0.5,2.5);
\draw[orange, dashed] (0.5,-0.5) -- (0.5,2.5);
\draw[orange, dashed] (1.5,-0.5) -- (1.5,2.5);
\draw[orange, dashed] (2.5,0.5) -- (2.5,2.5);

\end{tikzpicture}
\end{center}

There exist precisely four snake graph tilings of $\mathcal{G}$
\begin{center}
\begin{tikzpicture}[scale=1]


\draw[orange, dashed] (-0.5,-0.5) -- (1.5,-0.5);
\draw[orange, dashed] (-0.5,0.5) -- (2.5,0.5);
\draw[orange, dashed] (0.5,1.5) -- (2.5,1.5);
\draw[orange, dashed] (-0.5,2.5) -- (2.5,2.5);

\draw[orange, dashed] (-0.5,-0.5) -- (-0.5,2.5);
\draw[orange, dashed] (0.5,0.5) -- (0.5,2.5);
\draw[orange, dashed] (1.5,-0.5) -- (1.5,0.5);
\draw[orange, dashed] (2.5,0.5) -- (2.5,2.5);

\draw[orange, dashed] (-0.5,-0.5) -- (1.5,-0.5);
\draw[orange, dashed] (-0.5,0.5) -- (2.5,0.5);
\draw[orange, dashed] (0.5,1.5) -- (2.5,1.5);
\draw[orange, dashed] (-0.5,2.5) -- (2.5,2.5);

\draw[orange, dashed] (-0.5,-0.5) -- (-0.5,2.5);
\draw[orange, dashed] (0.5,0.5) -- (0.5,2.5);
\draw[orange, dashed] (1.5,-0.5) -- (1.5,0.5);
\draw[orange, dashed] (2.5,0.5) -- (2.5,2.5);


\draw[orange, dashed] (3,-0.5) -- (5,-0.5);
\draw[orange, dashed] (3,0.5) -- (6,0.5);
\draw[orange, dashed] (3,2.5) -- (6,2.5);

\draw[orange, dashed] (3,-0.5) -- (3,2.5);
\draw[orange, dashed] (4,0.5) -- (4,2.5);
\draw[orange, dashed] (5,-0.5) -- (5,2.5);
\draw[orange, dashed] (6,0.5) -- (6,2.5);

\draw[orange, dashed] (3,-0.5) -- (5,-0.5);
\draw[orange, dashed] (3,0.5) -- (6,0.5);
\draw[orange, dashed] (3,2.5) -- (6,2.5);

\draw[orange, dashed] (3,-0.5) -- (3,2.5);
\draw[orange, dashed] (4,0.5) -- (4,2.5);
\draw[orange, dashed] (5,-0.5) -- (5,2.5);
\draw[orange, dashed] (6,0.5) -- (6,2.5);


\draw[orange, dashed] (6.5,-0.5) -- (8.5,-0.5);
\draw[orange, dashed] (6.5,0.5) -- (9.5,0.5);
\draw[orange, dashed] (6.5,1.5) -- (8.5,1.5);
\draw[orange, dashed] (6.5,2.5) -- (9.5,2.5);

\draw[orange, dashed] (6.5,-0.5) -- (6.5,2.5);
\draw[orange, dashed] (8.5,-0.5) -- (8.5,2.5);
\draw[orange, dashed] (9.5,0.5) -- (9.5,2.5);

\draw[orange, dashed] (6.5,-0.5) -- (8.5,-0.5);
\draw[orange, dashed] (6.5,0.5) -- (9.5,0.5);
\draw[orange, dashed] (6.5,1.5) -- (8.5,1.5);
\draw[orange, dashed] (6.5,2.5) -- (9.5,2.5);

\draw[orange, dashed] (6.5,-0.5) -- (6.5,2.5);
\draw[orange, dashed] (8.5,-0.5) -- (8.5,2.5);
\draw[orange, dashed] (9.5,0.5) -- (9.5,2.5);


\draw[orange, dashed] (10,-0.5) -- (12,-0.5);
\draw[orange, dashed] (12,0.5) -- (13,0.5);
\draw[orange, dashed] (10,1.5) -- (12,1.5);
\draw[orange, dashed] (10,2.5) -- (13,2.5);

\draw[orange, dashed] (10,-0.5) -- (10,2.5);
\draw[orange, dashed] (11,-0.5) -- (11,1.5);
\draw[orange, dashed] (12,-0.5) -- (12,2.5);
\draw[orange, dashed] (13,0.5) -- (13,2.5);

\draw[orange, dashed] (10,-0.5) -- (12,-0.5);
\draw[orange, dashed] (12,0.5) -- (13,0.5);
\draw[orange, dashed] (10,1.5) -- (12,1.5);
\draw[orange, dashed] (10,2.5) -- (13,2.5);

\draw[orange, dashed] (10,-0.5) -- (10,2.5);
\draw[orange, dashed] (11,-0.5) -- (11,1.5);
\draw[orange, dashed] (12,-0.5) -- (12,2.5);
\draw[orange, dashed] (13,0.5) -- (13,2.5);
\end{tikzpicture}
\end{center}
\end{example}

\begin{proposition}\label{prop:snake_graph_tilings_and_perfect_matchings}
Let $\mathcal{G}$ be a snake graph. There exists a bijection between the set $\text{Til}(\mathcal{G})$ of snake graph tilings of $\mathcal{G}$ and the set $\text{Match}(\mathcal{G})$ of perfect matchings of $\mathcal{G}$. 
\end{proposition}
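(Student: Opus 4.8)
The plan is to exhibit a single natural correspondence at the level of edges and dominoes, and then verify that it respects the ``cover each vertex/square exactly once'' condition. The starting observation is that the edges of $\mathcal{G}$ and the dominoes of $T(\mathcal{G})$ carry the same data: if $e=\{v,w\}$ is an edge of $\mathcal{G}$, then $v,w$ are lattice points at Euclidean distance $1$, so the two unit squares of $T(\mathcal{G})$ centered at $v$ and $w$ share a side, and their union is a domino $d_e$; conversely, every domino of $T(\mathcal{G})$ is the union of two unit squares whose centers are vertices of $\mathcal{G}$ at distance $1$. I would therefore define $\Phi\colon\text{Match}(\mathcal{G})\to\text{Til}(\mathcal{G})$ by $\Phi(M)=\{\,d_e : e\in M\,\}$.

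First I would check that $\Phi(M)$ is genuinely a snake graph tiling. Since the edges of a matching are pairwise vertex-disjoint, the corresponding dominoes use pairwise distinct unit squares and hence do not overlap; since $M$ is \emph{perfect}, every vertex of $\mathcal{G}$ lies on exactly one edge of $M$, so every unit square of $T(\mathcal{G})$ is covered by exactly one domino $d_e$. As $T(\mathcal{G})$ is by definition the union of these unit squares, $\Phi(M)$ covers $T(\mathcal{G})$ exactly, i.e. $\Phi(M)\in\text{Til}(\mathcal{G})$. This direction needs nothing beyond the definitions.

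The substance of the proof lies in the inverse map, which requires the geometric lemma: \emph{if two vertices of $\mathcal{G}$ are at distance $1$, they are joined by an edge of $\mathcal{G}$.} Granting this, each domino of a tiling $T$ (a pair of adjacent unit squares with centers $v,w$) determines the edge $\{v,w\}$ of $\mathcal{G}$, yielding a map $\Psi\colon\text{Til}(\mathcal{G})\to\text{Match}(\mathcal{G})$; that the resulting edge set is a perfect matching follows exactly as above, and $\Phi,\Psi$ are mutually inverse because both are induced by the identification $e\leftrightarrow d_e$. To prove the lemma I would use the defining monotonicity of snake graphs: each tile $G_{i+1}$ sits directly north or directly east of $G_i$, so, recording each tile by its lower-left corner, the sequence of corners is non-decreasing in both coordinates. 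Assuming without loss of generality that $v=(a,b)$, $w=(a+1,b)$, and that $\{v,w\}$ is not an edge, neither of the unit squares $[a,a+1]\times[b-1,b]$ nor $[a,a+1]\times[b,b+1]$ is a tile; since $v$ and $w$ are nonetheless vertices, they must be corners of tiles lying strictly to the left of $v$ and strictly to the right of $w$. Connectedness then forces the monotone path of tiles to cross the column $x\in[a,a+1]$, and I would show that the first tile entering that column is forced to have row index $b-1$ or $b$ (a higher row index would require the path to descend afterwards to reach the tile on the right of $w$, contradicting monotonicity), which reinstates one of the two forbidden tiles; the vertical adjacency case is symmetric.

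The main obstacle is precisely this lemma: one must rule out a distance-$1$ adjacency between two vertices that is not realized by a tile edge, and the only available leverage is the up/right monotonicity built into the definition of a snake graph, so the case analysis on where the snake enters the intermediate column is the crux. Once the lemma is established, $\Phi$ and $\Psi$ are mutually inverse, giving the desired bijection and in particular $|\text{Til}(\mathcal{G})|=|\text{Match}(\mathcal{G})|$.
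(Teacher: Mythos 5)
Your bijection is exactly the one the paper uses: an edge $e$ of a matching corresponds to the domino $d_e$ formed by the two unit squares centered at its endpoints, and your verification that a perfect matching yields a tiling (non-overlap from vertex-disjointness of the matching, coverage from perfectness) coincides with the paper's argument for that direction. The difference is in the converse. The paper takes the unique domino covering the square centered at a vertex $v_p$, names the center of its other square $v_q$, and declares $e_{pq}$ to be an edge of the matching --- implicitly assuming that $v_p v_q$ is an edge of $\mathcal{G}$ at all. You isolate this as a genuine geometric lemma (two vertices of a snake graph at Euclidean distance $1$ are joined by an edge) and prove it from the north/east monotonicity of the tile sequence: if the segment $vw$ were not an edge, both tiles containing it would be absent, forcing $v$ and $w$ to be corners of tiles strictly to the left and strictly to the right respectively; the snake must then occupy the intermediate column, and monotonicity of the row indices (the first tile entering that column sits at the row of the last tile of the previous column, which is at least $b-1$, while it cannot exceed $b$ without stranding the tile to the right of $w$) pins every tile of that column to one of the two forbidden positions, a contradiction. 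This lemma is not vacuous: it fails for non-monotone unions of tiles (for a U-shaped polyomino, a domino of the cover can join two squares whose centers are not adjacent in the graph), so it genuinely uses the snake condition, and your proof fills a real gap in the paper's converse at the cost of this column-crossing case analysis. Modulo writing out that case analysis in full detail (your ``I would show'' step, which does go through exactly as you indicate), the proposal is complete and correct.
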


\begin{proof}
Given $P \in \text{Match}(\mathcal{G})$, we associate a snake graph tiling as follows: For an edge $e_{ij}$ in $P$ connecting the vertices $v_i$ and $v_j$ in $\mathcal{G}$, we create a domino tile $D_{ij}$ associated to $e_{ij}$ by linking the unit squares centered at $v_i$ and $v_j$. If two distinct edges $e_{ij}$ and $e_{kl}$ exist in $P$, the intersection of the domino tiles $D_{ij}$ and $D_{kl}$ is empty. Otherwise, the existence of a common vertex between $e_{ij}$ and $e_{kl}$ would contradict the assumption of $P$ being a matching of $\mathcal{G}$. Moreover,
\[
\bigcup_{e_{ij}} D_{ij} = T(\mathcal{G}),
\]
where the union is taken over all edges $e_{ij}$ in $P$. This holds because for each edge $e_{kl}$ in $P$, there exists a tile $G_i$ in $\mathcal{G}$ such that $e_{kl}$ is an edge of $G_i$, and thus $D_{kl}\subset T(G_i) \subset T(\mathcal{G})$. Furthermore, as $P$ is a perfect matching, for each cover $T(G_i)$ there exist edges $e_{pq}$ in $P$ such that $T(G_i)\subseteq \bigcup_{e_{pq}}D_{ pq}$. 
Conversely, considering a snake graph tiling of $\mathcal{G}$, for every vertex $v_p$ of $\mathcal{G}$, there exists a unique domino tile such that $v_p$ is the center of one of the unit squares contained in that domino tile. Let $v_q$ be the center of the other unit square. We construct a perfect matching $P$ where the edge $e_{pq}$ from $v_p$ to $v_q$ is an edge of $P$. 

This proves the bijection between snake graph tilings and perfect matchings in $\mathcal{G}$.
\end{proof}

\begin{example}\label{ex:snake_graph_tiling_matching} Let $\mathcal{G}$ the snake graph in the Example~\ref{ex:SnakeGraphTilings}. Then, the perfect matchings associated to the four snake graph tilings are

\begin{center}
\begin{tikzpicture}[scale=1]


\draw[gray] (0,0) -- (0,1);
\draw[gray] (1,0) -- (1,1);
\draw[gray] (1,1) -- (0,1);
\draw[red, ultra thick] (0,0) -- (1,0);


\draw[red, ultra thick] (0,1) -- (0,2);
\draw[gray] (1,1) -- (1,2);
\draw[gray] (1,2) -- (0,2);
\draw[gray] (0,1) -- (1,1);


\draw[gray] (2,1) -- (2,2);
\draw[red, ultra thick] (1,2) -- (2,2);
\draw[red, ultra thick] (2,1) -- (1,1);


\draw[orange, dashed] (-0.5,-0.5) -- (1.5,-0.5);
\draw[orange, dashed] (-0.5,0.5) -- (2.5,0.5);
\draw[orange, dashed] (0.5,1.5) -- (2.5,1.5);
\draw[orange, dashed] (-0.5,2.5) -- (2.5,2.5);

\draw[orange, dashed] (-0.5,-0.5) -- (-0.5,2.5);
\draw[orange, dashed] (0.5,0.5) -- (0.5,2.5);
\draw[orange, dashed] (1.5,-0.5) -- (1.5,0.5);
\draw[orange, dashed] (2.5,0.5) -- (2.5,2.5);

\draw[orange, dashed] (-0.5,-0.5) -- (1.5,-0.5);
\draw[orange, dashed] (-0.5,0.5) -- (2.5,0.5);
\draw[orange, dashed] (0.5,1.5) -- (2.5,1.5);
\draw[orange, dashed] (-0.5,2.5) -- (2.5,2.5);

\draw[orange, dashed] (-0.5,-0.5) -- (-0.5,2.5);
\draw[orange, dashed] (0.5,0.5) -- (0.5,2.5);
\draw[orange, dashed] (1.5,-0.5) -- (1.5,0.5);
\draw[orange, dashed] (2.5,0.5) -- (2.5,2.5);

\filldraw[fill=white!40,draw=black!80] (0,0) circle (2pt);
\filldraw[fill=white!40,draw=black!80] (1,0) circle (2pt);
\filldraw[fill=white!40,draw=black!80] (0,1) circle (2pt);
\filldraw[fill=white!40,draw=black!80] (1,1) circle (2pt);
\filldraw[fill=white!40,draw=black!80] (2,1) circle (2pt);
\filldraw[fill=white!40,draw=black!80] (0,2) circle (2pt);
\filldraw[fill=white!40,draw=black!80] (1,2) circle (2pt);
\filldraw[fill=white!40,draw=black!80] (2,2) circle (2pt);


\draw[gray] (3.5,0) -- (3.5,1);
\draw[gray] (4.5,0) -- (4.5,1);
\draw[gray] (4.5,1) -- (3.5,1);
\draw[blue, ultra thick] (3.5,0) -- (4.5,0);


\draw[blue, ultra thick] (3.5,1) -- (3.5,2);
\draw[blue, ultra thick] (4.5,1) -- (4.5,2);
\draw[gray] (4.5,2) -- (3.5,2);
\draw[gray] (3.5,1) -- (4.5,1);


\draw[blue, ultra thick] (5.5,1) -- (5.5,2);
\draw[gray] (4.5,2) -- (5.5,2);
\draw[gray] (5.5,1) -- (4.5,1);


\draw[orange, dashed] (3,-0.5) -- (5,-0.5);
\draw[orange, dashed] (3,0.5) -- (6,0.5);
\draw[orange, dashed] (3,2.5) -- (6,2.5);

\draw[orange, dashed] (3,-0.5) -- (3,2.5);
\draw[orange, dashed] (4,0.5) -- (4,2.5);
\draw[orange, dashed] (5,-0.5) -- (5,2.5);
\draw[orange, dashed] (6,0.5) -- (6,2.5);

\draw[orange, dashed] (3,-0.5) -- (5,-0.5);
\draw[orange, dashed] (3,0.5) -- (6,0.5);
\draw[orange, dashed] (3,2.5) -- (6,2.5);

\draw[orange, dashed] (3,-0.5) -- (3,2.5);
\draw[orange, dashed] (4,0.5) -- (4,2.5);
\draw[orange, dashed] (5,-0.5) -- (5,2.5);
\draw[orange, dashed] (6,0.5) -- (6,2.5);

\filldraw[fill=white!40,draw=black!80] (3.5,0) circle (2pt);
\filldraw[fill=white!40,draw=black!80] (4.5,0) circle (2pt);
\filldraw[fill=white!40,draw=black!80] (3.5,1) circle (2pt);
\filldraw[fill=white!40,draw=black!80] (4.5,1) circle (2pt);
\filldraw[fill=white!40,draw=black!80] (5.5,1) circle (2pt);
\filldraw[fill=white!40,draw=black!80] (3.5,2) circle (2pt);
\filldraw[fill=white!40,draw=black!80] (4.5,2) circle (2pt);
\filldraw[fill=white!40,draw=black!80] (5.5,2) circle (2pt);


\draw[gray] (7,0) -- (7,1);
\draw[gray] (8,0) -- (8,1);
\draw[green, ultra thick] (8,1) -- (7,1);
\draw[green, ultra thick] (7,0) -- (8,0);


\draw[gray] (7,1) -- (7,2);
\draw[gray] (8,1) -- (8,2);
\draw[green, ultra thick] (8,2) -- (7,2);


\draw[green, ultra thick] (9,1) -- (9,2);
\draw[gray] (8,2) -- (9,2);
\draw[gray] (9,1) -- (8,1);


\draw[orange, dashed] (6.5,-0.5) -- (8.5,-0.5);
\draw[orange, dashed] (6.5,0.5) -- (9.5,0.5);
\draw[orange, dashed] (6.5,1.5) -- (8.5,1.5);
\draw[orange, dashed] (6.5,2.5) -- (9.5,2.5);

\draw[orange, dashed] (6.5,-0.5) -- (6.5,2.5);
\draw[orange, dashed] (8.5,-0.5) -- (8.5,2.5);
\draw[orange, dashed] (9.5,0.5) -- (9.5,2.5);

\draw[orange, dashed] (6.5,-0.5) -- (8.5,-0.5);
\draw[orange, dashed] (6.5,0.5) -- (9.5,0.5);
\draw[orange, dashed] (6.5,1.5) -- (8.5,1.5);
\draw[orange, dashed] (6.5,2.5) -- (9.5,2.5);

\draw[orange, dashed] (6.5,-0.5) -- (6.5,2.5);
\draw[orange, dashed] (8.5,-0.5) -- (8.5,2.5);
\draw[orange, dashed] (9.5,0.5) -- (9.5,2.5);

\filldraw[fill=white!40,draw=black!80] (7,0) circle (2pt);
\filldraw[fill=white!40,draw=black!80] (8,0) circle (2pt);
\filldraw[fill=white!40,draw=black!80] (7,1) circle (2pt);
\filldraw[fill=white!40,draw=black!80] (8,1) circle (2pt);
\filldraw[fill=white!40,draw=black!80] (9,1) circle (2pt);
\filldraw[fill=white!40,draw=black!80] (7,2) circle (2pt);
\filldraw[fill=white!40,draw=black!80] (8,2) circle (2pt);
\filldraw[fill=white!40,draw=black!80] (9,2) circle (2pt);


\draw[purple, ultra thick] (10.5,0) -- (10.5,1);
\draw[purple, ultra thick] (11.5,0) -- (11.5,1);
\draw[gray] (11.5,1) -- (10.5,1);
\draw[gray] (10.5,0) -- (11.5,0);


\draw[gray] (10.5,1) -- (10.5,2);
\draw[gray] (11.5,1) -- (11.5,2);
\draw[purple, ultra thick] (11.5,2) -- (10.5,2);
\draw[gray] (10.5,1) -- (11.5,1);


\draw[purple, ultra thick] (12.5,1) -- (12.5,2);
\draw[gray] (11.5,2) -- (12.5,2);
\draw[gray] (12.5,1) -- (11.5,1);


\draw[orange, dashed] (10,-0.5) -- (12,-0.5);
\draw[orange, dashed] (12,0.5) -- (13,0.5);
\draw[orange, dashed] (10,1.5) -- (12,1.5);
\draw[orange, dashed] (10,2.5) -- (13,2.5);

\draw[orange, dashed] (10,-0.5) -- (10,2.5);
\draw[orange, dashed] (11,-0.5) -- (11,1.5);
\draw[orange, dashed] (12,-0.5) -- (12,2.5);
\draw[orange, dashed] (13,0.5) -- (13,2.5);

\draw[orange, dashed] (10,-0.5) -- (12,-0.5);
\draw[orange, dashed] (12,0.5) -- (13,0.5);
\draw[orange, dashed] (10,1.5) -- (12,1.5);
\draw[orange, dashed] (10,2.5) -- (13,2.5);

\draw[orange, dashed] (10,-0.5) -- (10,2.5);
\draw[orange, dashed] (11,-0.5) -- (11,1.5);
\draw[orange, dashed] (12,-0.5) -- (12,2.5);
\draw[orange, dashed] (13,0.5) -- (13,2.5);

\filldraw[fill=white!40,draw=black!80] (10.5,0) circle (2pt);
\filldraw[fill=white!40,draw=black!80] (11.5,0) circle (2pt);
\filldraw[fill=white!40,draw=black!80] (10.5,1) circle (2pt);
\filldraw[fill=white!40,draw=black!80] (11.5,1) circle (2pt);
\filldraw[fill=white!40,draw=black!80] (12.5,1) circle (2pt);
\filldraw[fill=white!40,draw=black!80] (10.5,2) circle (2pt);
\filldraw[fill=white!40,draw=black!80] (11.5,2) circle (2pt);
\filldraw[fill=white!40,draw=black!80] (12.5,2) circle (2pt);
\end{tikzpicture}
\end{center}

\end{example}

\subsection{Decorated snake graph tilings and paths}
We intend to establish a correspondence between routes and snake graph tilings. To achieve this, we define a grid in $(\mathbb{R}_{\geq 0})^2$, placing black points at coordinates $(2n, 2m+0.5)$ and $(2n+1, 2m+1.5)$, where $n$ and $m$ are elements of $\mathbb{N}$.

\begin{center}
\begin{tikzpicture}[scale=0.9]
\draw[black,thick,->] (0,0)--(6.5,0) node[right] {}; 
\foreach \x/\xtext in {1/1, 2/2, 3/3, 4/4, 5/5, 6/6} 
\draw[shift={(\x,0)},black] (0pt,2pt)--(0pt,-2pt) node[below] {$\xtext$};
\foreach \y/\ytext in {1/1, 2/2, 3/3, 4/4, 5/5, 6/6} 
\draw[shift={(0,\y)},black] (2pt,0pt)--(-2pt,0pt) node[left] {$\ytext$};
\draw[black,thick,->] (0,0)--(0,6.5) node[left,above] {}; 
\foreach \x in {1, 2, 3, 4, 5, 6} 
\draw[gray, dashed] (\x,0)--(\x,6.5);
\foreach \x in {1, 2, 3, 4, 5, 6} 
\draw[gray, dashed] (0,\x)--(6.5,\x);
\foreach \x in {0, 2, 4, 6} 
\filldraw[fill=black!100,draw=black!80] (\x,0.5) circle (2.5pt)    node[anchor=north] {\small };
\foreach \x in {0, 2, 4, 6}
\filldraw[fill=black!100,draw=black!80] (\x,2.5) circle (2.5pt)    node[anchor=north] {\small };
\foreach \x in {0, 2, 4, 6}
\filldraw[fill=black!100,draw=black!80] (\x,4.5) circle (2.5pt)    node[anchor=north] {\small };

\foreach \x in {1, 3, 5}
\filldraw[fill=black!100,draw=black!80] (\x,1.5) circle (2.5pt)    node[anchor=north] {\small };
\foreach \x in {1, 3, 5}
\filldraw[fill=black!100,draw=black!80] (\x,3.5) circle (2.5pt)    node[anchor=north] {\small };
\foreach \x in {1, 3, 5}
\filldraw[fill=black!100,draw=black!80] (\x,5.5) circle (2.5pt)    node[anchor=north] {\small };
\end{tikzpicture}
\end{center}

Subsequently, we position the snake graph cover $T(\mathcal{G})$ of $\mathcal{G}$ on the grid, ensuring that the southwest vertex of the initial tile of $T(\mathcal{G})$ aligns with $(0,0)$, thus establishing a decoration using black points within the snake graph cover. We can observe that the snake graph cover is decorated with the following two unit squares

\begin{center}
\begin{tikzpicture}[domain=0:4, scale=1]

  \draw[dashed] (-5,1) -- (-4,1);
  \draw[dashed] (-5,1) -- (-5,0);
  \draw[dashed] (-4,0) -- (-4,1);
  \draw[dashed] (-4,0) -- (-5,0);
  
  \draw[dashed] (1,1) -- (2,1);
  \draw[dashed] (1,1) -- (1,0);
  \draw[dashed] (2,0) -- (2,1);
  \draw[dashed] (2,0) -- (1,0);
 
 \filldraw[fill=black!100,draw=black!80] (-4,0.5) circle (2.5pt) node[anchor=north] {\small } node[above=13pt] {\textbf{Right-decorated square~~~~~~~}};
\filldraw[fill=black!100,draw=black!80] (1,0.5) circle (2.5pt) node[anchor=north] {\small } node[above=13pt] {\textbf{~~~~~~~Left-decorated square}};
  \end{tikzpicture}
\end{center} 
in such a way that the initial tile of $T(\mathcal{G})$ is left-decorated, and each square that shares an edge with a left-decorated square is right-decorated, and vice versa. In this way, any domino tile in a snake graph tiling will be decorated in one of the following ways

\begin{center}
\begin{tikzpicture}[domain=0:4, scale=1]

  \draw[dashed] (-5,1) -- (-3,1);
  \draw[dashed] (-5,1) -- (-5,0);
  \draw[dashed] (-3,0) -- (-3,1);
  \draw[dashed] (-4,0) -- (-4,1);
  \draw[dashed] (-3,0) -- (-5,0);
  
  \draw[dashed] (-1,1) -- (1,1);
  \draw[dashed] (-1,1) -- (-1,0);
  \draw[dashed] (0,0) -- (0,1);
  \draw[dashed] (1,0) -- (1,1);
  \draw[dashed] (1,0) -- (-1,0);
 
  \draw[dashed] (3,1) -- (4,1);
  \draw[dashed] (3,0) -- (4,0);
  \draw[dashed] (3,1) -- (3,-1);
  \draw[dashed] (4,-1) -- (3,-1);
  \draw[dashed] (4,-1) -- (4,1);
  
  \draw[dashed] (6,1) -- (7,1);
  \draw[dashed] (6,0) -- (7,0);
  \draw[dashed] (6,1) -- (6,-1);
  \draw[dashed] (7,-1) -- (6,-1);
  \draw[dashed] (7,-1) -- (7,1);
  
  \filldraw[fill=black!100,draw=black!80] (-4,0.5) circle (2.5pt)    node[anchor=north] {\small };
  
  \filldraw[fill=black!100,draw=black!80] (-1,0.5) circle (2.5pt)    node[anchor=north] {\small };
  \filldraw[fill=black!100,draw=black!80] (1,0.5) circle (2.5pt)    node[anchor=north] {\small };
  
   \filldraw[fill=black!100,draw=black!80] (4,0.5) circle (2.5pt)    node[anchor=north] {\small };
  \filldraw[fill=black!100,draw=black!80] (3,-0.5) circle (2.5pt)    node[anchor=north] {\small };
  \filldraw[fill=black!100,draw=black!80] (6,0.5) circle (2.5pt)    node[anchor=north] {\small };
  \filldraw[fill=black!100,draw=black!80] (7,-0.5) circle (2.5pt)    node[anchor=north] {\small };
  
  \end{tikzpicture}
\end{center} 

So decorated dominoes can be naturally associated to these figures as follows:

\begin{figure}[ht]
\begin{center}
\begin{tikzpicture}[domain=0:4, scale=1]

  \draw[orange] (-5,1) -- (-3,1);
  \draw[orange] (-5,1) -- (-5,0);
  \draw[orange] (-3,0) -- (-3,1);
  \draw[orange] (-3,0) -- (-5,0);
  
  \draw[orange] (-1,1) -- (1,1);
  \draw[orange] (-1,1) -- (-1,0);
  \draw[orange] (1,0) -- (1,1);
  \draw[red, fill, ->, >=latex] (-1,0.5) -- (0.9,0.5);
  \draw[orange] (1,0) -- (-1,0);
 
  \draw[orange] (3,1) -- (4,1);
  \draw[orange] (3,1) -- (3,-1);
  \draw[orange] (4,-1) -- (3,-1);
  \draw[orange] (4,-1) -- (4,1);
  \draw[red, fill, ->, >=latex] (3,-0.5) -- (3.9,0.4);
  
  \draw[orange] (6,1) -- (7,1);
  \draw[orange] (6,1) -- (6,-1);
  \draw[orange] (7,-1) -- (6,-1);
  \draw[orange] (7,-1) -- (7,1);
  \draw[red, fill, ->, >=latex] (6,0.5) -- (6.9,-0.4);

  \filldraw[fill=black!100,draw=black!80] (-4,0.5) circle (2.5pt)    node[anchor=north] {\small } node[above=13pt] {\textbf{Empty domino}};
  
  \filldraw[fill=black!100,draw=black!80] (-1,0.5) circle (2.5pt)    node[anchor=north] {\small } node[above=13pt] {\textbf{~~~~~~~~~~~~~~$(2,0)$ domino}};
  \filldraw[fill=black!100,draw=black!80] (1,0.5) circle (2.5pt)    node[anchor=north] {\small };
  
   \filldraw[fill=black!100,draw=black!80] (4,0.5) circle (2.5pt)    node[anchor=north] {\small }node[above=13pt] {\textbf{$(1,1)$ domino~~~~~~~~~}};
  \filldraw[fill=black!100,draw=black!80] (3,-0.5) circle (2.5pt)    node[anchor=north] {\small };
  
  \filldraw[fill=black!100,draw=black!80] (6,0.5) circle (2.5pt)    node[anchor=north] {\small } node[above=13pt] {\textbf{~~~~~~~$(1,-1)$ domino}};
  \filldraw[fill=black!100,draw=black!80] (7,-0.5) circle (2.5pt)    node[anchor=north] {\small };
  
  \end{tikzpicture}
\end{center} 
\caption{Decorated dominoes.}
\label{fig:decorated_dominoes}
\end{figure}
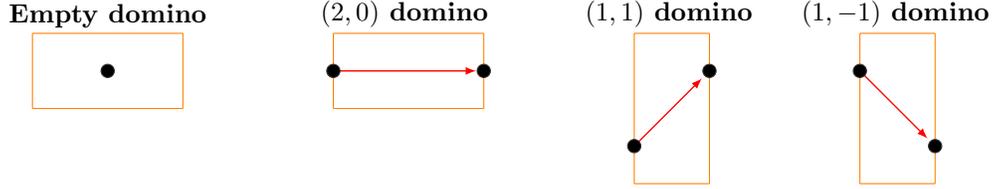

\begin{remark}
While the coordinates of the black points in the grid are not aligned with integers on the $y$-axis, a simple translation by 0.5 units downwards shows that the resulting grid does indeed possess integer coordinates. Based on this observation, we classify each of the paths derived from the decorated snake graph tilings as a lattice path with steps $S=\{(2,0),(1,1),(1,-1)\}$.
\end{remark}

\begin{definition}\label{def:triangular_snake_graph_1}
    The \emph{triangular snake graph} $\mathcal{T}_\mathcal{G}$ associated to $\mathcal{G}$ is a connected acyclic-directed graph derived from the decorated snake graph cover. It is constructed by placing an arrow between two distinct black points if the corresponding unit squares share an edge as shown in Figure~\ref{fig:decorated_dominoes}.
\end{definition}

\begin{remark}\label{re:Basic_properties_T_G}
Notably, the black points that belong to only one square in $T(\mathcal{G})$ are sources or sinks. Considering that the arrows in the triangular snake graph are oriented from left to right, the sources (resp. the sinks) reside on a domino tile's left-hand side (resp. right-hand side). Additionally, note that every tile $G_j$ of $\mathcal{G}$ contributes a triangular tile labeled by the number $j$, depending on the parity of $j$, as follows:
    
\begin{center}
\begin{tikzpicture}[scale=1.2]

\draw[gray] (0.5,4) -- (0.5,5);
\draw[gray] (1.5,4) -- (1.5,5);
\draw[gray] (1.5,5) -- (0.5,5);
\draw[gray] (0.5,4) -- (1.5,4);

\filldraw[fill=white!40,draw=black!80] (0.5,4) circle (3pt);
\filldraw[fill=white!40,draw=black!80] (0.5,5) circle (3pt);
\filldraw[fill=white!40,draw=black!80] (1.5,4) circle (3pt);
\filldraw[fill=white!40,draw=black!80] (1.5,5) circle (3pt);
\filldraw(1,3)     node[anchor=center] {$\Downarrow$};
\filldraw(1,4.5)     node[anchor=center] {$G_{2i-1}$};

\draw[orange, dashed] (0,0) -- (2,0);
\draw[orange, dashed] (0,1) -- (2,1);
\draw[orange, dashed] (0,2) -- (2,2);

\draw[orange, dashed] (0,0) -- (0,2);
\draw[orange, dashed] (1,0) -- (1,2);
\draw[orange, dashed] (2,0) -- (2,2);


\draw[gray, fill, ->, >=latex] (0,0.5) -- (0.9,1.4);
\draw[gray, fill, ->, >=latex] (1,1.5) -- (1.9,0.6);
\draw[gray, fill, ->, >=latex] (0,0.5) -- (1.9,0.5);

\foreach \x in {0, 2} 
\filldraw[fill=black!100,draw=black!80] (\x,0.5) circle (2.5pt)    node[anchor=north] {\small};

\foreach \x in {1}
\filldraw[fill=black!100,draw=black!80] (\x,1.5) circle (2.5pt)    node[anchor=north] {\small };

\filldraw (1,0.8) node {$2i-1$};
\filldraw (1,0.8) node {$2i-1$};

\end{tikzpicture} \hspace{1cm}\begin{tikzpicture}[scale=1.2]

\draw[gray] (0.5,4) -- (0.5,5);
\draw[gray] (1.5,4) -- (1.5,5);
\draw[gray] (1.5,5) -- (0.5,5);
\draw[gray] (0.5,4) -- (1.5,4);

\filldraw[fill=white!40,draw=black!80] (0.5,4) circle (3pt);
\filldraw[fill=white!40,draw=black!80] (0.5,5) circle (3pt);
\filldraw[fill=white!40,draw=black!80] (1.5,4) circle (3pt);
\filldraw[fill=white!40,draw=black!80] (1.5,5) circle (3pt);
\filldraw(1,3)     node[anchor=center] {$\Downarrow$};
\filldraw(1,4.5)     node[anchor=center] {$G_{2i}$};

\draw[orange, dashed] (0,0) -- (2,0);
\draw[orange, dashed] (0,1) -- (2,1);
\draw[orange, dashed] (0,2) -- (2,2);

\draw[orange, dashed] (0,0) -- (0,2);
\draw[orange, dashed] (1,0) -- (1,2);
\draw[orange, dashed] (2,0) -- (2,2);


\draw[gray, fill, ->, >=latex] (1,0.5) -- (1.9,1.4);
\draw[gray, fill, ->, >=latex] (0,1.5) -- (0.9,0.6);
\draw[gray, fill, ->, >=latex] (0,1.5) -- (1.9,1.5);

\foreach \x in {0, 2} 
\filldraw[fill=black!100,draw=black!80] (\x,1.5) circle (2.5pt)    node[anchor=west] {\small};

\foreach \x in {1}
\filldraw[fill=black!100,draw=black!80] (\x,0.5) circle (2.5pt)    node[anchor=north] {\small };


\filldraw (1,1.2) node {$2i$};
\filldraw (1,1.2) node {$2i$};
\end{tikzpicture}
\end{center}

\end{remark}

We will now prove some other not-so-obvious properties about $\mathcal{T}_\mathcal{G}$.

\begin{lemma}\label{le:equality_of_sources_and_sinks}
Let $\mathcal{T}_\mathcal{G}$ be the triangular snake graph associated to $\mathcal{G}$. Then $\mathcal{T}_\mathcal{G}$ possesses an equal number of sources and sinks.
\end{lemma}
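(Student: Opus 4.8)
The plan is to prove the statement by induction on the number $d$ of tiles of $\mathcal{G}$, after first replacing the dynamical notions of source and sink by a purely geometric description in terms of the cover $T(\mathcal{G})$. First I would record the following characterization, implicit in Remark~\ref{re:Basic_properties_T_G}: every black point $p$ is the midpoint of a vertical unit edge of the grid, separating a left cell $L$ and a right cell $R$; since all arrows of $\mathcal{T}_\mathcal{G}$ point to the right, each of the three decorated dominoes of Figure~\ref{fig:decorated_dominoes} that produces an arrow \emph{into} $p$ has $L$ as one of its two squares. Hence $p$ is a source exactly when $R\subset T(\mathcal{G})$ but $L\not\subset T(\mathcal{G})$, a sink exactly when $L\subset T(\mathcal{G})$ but $R\not\subset T(\mathcal{G})$, and a \emph{through} vertex when both $L$ and $R$ lie in $T(\mathcal{G})$. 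In particular the type of $p$ depends only on the two cells lying immediately to its left and right in the same row. It then suffices to show that the number of black-point-bearing vertical edges that bound $T(\mathcal{G})$ on the left equals the number that bound it on the right.

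The base case $d=1$ is immediate: the cover is a $2\times 2$ block carrying exactly one source and one sink, as in Remark~\ref{re:Basic_properties_T_G}. For the inductive step I would use the snake graph axioms, which force $G_d$ to be glued either to the North or to the East of $G_{d-1}$; in both cases passing from $\mathcal{G}_{d-1}=(G_1,\dots,G_{d-1})$ to $\mathcal{G}$ adds exactly two new cells to the cover, and the monotone (up-and-to-the-right) staircase shape guarantees these two cells genuinely enlarge $T(\mathcal{G})$. In the North case the two new cells occupy a fresh top row with no other cells of $T(\mathcal{G})$ beside them; since no old cell acquires a new lateral neighbour, no old black point changes type, while the two new cells contribute either a single new through vertex (when their two black points coincide) or exactly one new source together with one new sink (when they do not). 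In the East case the two new cells form a fresh rightmost column: the shared vertical edge, formerly a right boundary of $T(\mathcal{G}_{d-1})$, becomes interior and thereby turns exactly one old sink into a through vertex, while the new right boundary of the added column creates exactly one new sink; by the left–right asymmetry of the decoration, no source and no other vertex is affected. In every case the quantity (number of sources) $-$ (number of sinks) is unchanged, so by induction it remains equal to its base value $0$.

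I expect the main difficulty to lie in the bookkeeping of the inductive step, specifically in matching the geometric criterion above against the arrow conventions of Figure~\ref{fig:decorated_dominoes} and in dispatching the parity subcases. These subcases decide, in the North gluing, whether the black points of the two new cells coincide (this is governed by the parity of the new row relative to its columns, so that a coincidence yields a through vertex and a non-coincidence yields a balanced source–sink pair), and, in the East gluing, which of the two rows carries the black point on the shared edge and which carries it on the new boundary (so that precisely one sink is destroyed and precisely one sink is created). One must also confirm the two structural facts used above: that a North gluing lands in a previously empty row, so the new cells have no neighbours to either side, and that an East gluing leaves every source untouched; both are consequences of the staircase geometry of $T(\mathcal{G})$ dictated by Definition~\ref{def:triangular_snake_graph_1} together with the snake graph axioms.
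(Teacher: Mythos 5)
Your proposal is correct and takes essentially the same route as the paper's proof: induction on the number of tiles, with the single-tile base case and a case split according to whether the last tile is glued to the north or to the east of its predecessor, verifying in each case that the source--sink balance is preserved (north gluing adds either one source and one sink or a single through vertex; east gluing trades the old sink for a new one), your left/right-cell criterion being a cleaner formulation of what the paper argues pictorially. The one point to tighten is that your justification of the criterion (every domino producing an arrow into $p$ contains the left cell $L$) gives only one direction of the ``exactly when''; you also need the converse---if $L\subset T(\mathcal{G})$ then $p$ \emph{does} have an incoming arrow---which holds because $L$ lies in some $2\times 2$ block $T(G_i)$, hence has a vertical neighbour in $T(\mathcal{G})$, and that vertical domino always carries an arrow into the black point on the right-hand side, namely $p$.
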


\begin{proof}
The proof proceeds by induction on the number of tiles in $\mathcal{G}$. For the base case when $d = 1$, $\mathcal{G}=G_1$, then the triangular snake graph $\mathcal{T}_\mathcal{G}$ associated to $G_1$ is depicted as:

\begin{center}
\begin{tikzpicture}[scale=1.2]

\draw[orange, dashed] (0,0) -- (2,0);
\draw[orange, dashed] (0,1) -- (2,1);
\draw[orange, dashed] (0,2) -- (2,2);

\draw[orange, dashed] (0,0) -- (0,2);
\draw[orange, dashed] (1,0) -- (1,2);
\draw[orange, dashed] (2,0) -- (2,2);


\draw[gray, fill, ->, >=latex] (0,0.5) -- (0.9,1.4);
\draw[gray, fill, ->, >=latex] (1,1.5) -- (1.9,0.6);
\draw[gray, fill, ->, >=latex] (0,0.5) -- (1.9,0.5);

\foreach \x in {0, 2} 
\filldraw[fill=black!100,draw=black!80] (\x,0.5) circle (2.5pt)    node[anchor=north] {\small };

\foreach \x in {1}
\filldraw[fill=black!100,draw=black!80] (\x,1.5) circle (2.5pt)    node[anchor=north] {\small };

\filldraw (-0.4,0.5) node {$s_1$};
\filldraw (2.4,0.5) node {$t_1$};

\filldraw (1,0.8) node {$1$};
\filldraw (1,0.8) node {$1$};
\end{tikzpicture}
\end{center}

In this case, there exists one source $s_1$ and one sink $t_1$. Assume the statement holds for any snake graph with $d$ tiles, $d>1$. We will now establish its validity for a snake graph $\mathcal{G}_{d+1}$ containing $d+1$ tiles. 

Consider the snake graph $\mathcal{G}_d$ obtained by removing the last tile $G_{d+1}$ from $\mathcal{G}_{d+1}$. By the induction hypothesis, $\mathcal{T}_{\mathcal{G}_{d}}$ has an equal number of sources and sinks. Now, we examine two possible scenarios for the last part of the triangular snake graph of $\mathcal{G}_d$:

\begin{center}
\begin{tikzpicture}[scale=1.2]

\draw[orange, dashed] (0,0) -- (2,0);
\draw[orange, dashed] (0,1) -- (2,1);
\draw[orange, dashed] (0,2) -- (2,2);

\draw[orange, dashed] (0,0) -- (0,2);
\draw[orange, dashed] (1,0) -- (1,2);
\draw[orange, dashed] (2,0) -- (2,2);


\draw[gray, fill, ->, >=latex] (0,0.5) -- (0.9,1.4);
\draw[gray, fill, ->, >=latex] (1,1.5) -- (1.9,0.6);
\draw[gray, fill, ->, >=latex] (0,0.5) -- (1.9,0.5);

\foreach \x in {0, 2} 
\filldraw[fill=black!100,draw=black!80] (\x,0.5) circle (2.5pt)    node[anchor=north] {\small};

\foreach \x in {1}
\filldraw[fill=black!100,draw=black!80] (\x,1.5) circle (2.5pt)    node[anchor=north] {\small };

\filldraw (1,0.8) node {$d$};
\filldraw (1,0.8) node {$d$};

\filldraw (2.2,0.5) node {$t$};
\end{tikzpicture} \hspace{1cm}\begin{tikzpicture}[scale=1.2]

\draw[orange, dashed] (0,0) -- (2,0);
\draw[orange, dashed] (0,1) -- (2,1);
\draw[orange, dashed] (0,2) -- (2,2);

\draw[orange, dashed] (0,0) -- (0,2);
\draw[orange, dashed] (1,0) -- (1,2);
\draw[orange, dashed] (2,0) -- (2,2);


\draw[gray, fill, ->, >=latex] (1,0.5) -- (1.9,1.4);
\draw[gray, fill, ->, >=latex] (0,1.5) -- (0.9,0.6);
\draw[gray, fill, ->, >=latex] (0,1.5) -- (1.9,1.5);

\foreach \x in {0, 2} 
\filldraw[fill=black!100,draw=black!80] (\x,1.5) circle (2.5pt)    node[anchor=west] {\small};

\foreach \x in {1}
\filldraw[fill=black!100,draw=black!80] (\x,0.5) circle (2.5pt)    node[anchor=north] {\small };


\filldraw (1,1.2) node {$d$};
\filldraw (1,1.2) node {$d$};

\filldraw (2.3,1.5) node {$t$};
\end{tikzpicture}
\end{center}

If tiles $G_d$ and $G_{d + 1}$ share the edge $e_d$, where $e_d$ is the north edge of $G_d$ and the south edge of $G_{d + 1}$, we obtain two possible different structures:

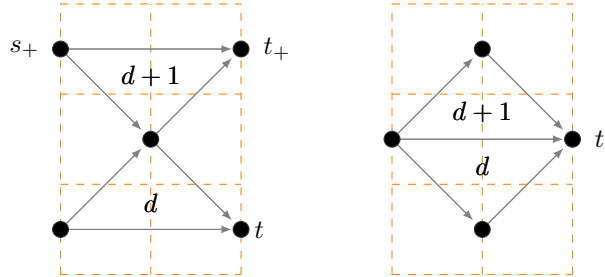
\begin{figure}[ht]
\begin{center}
\begin{tikzpicture}[scale=1.2]

\draw[orange, dashed] (0,0) -- (2,0);
\draw[orange, dashed] (0,1) -- (2,1);
\draw[orange, dashed] (0,2) -- (2,2);
\draw[orange, dashed] (0,3) -- (2,3);

\draw[orange, dashed] (0,0) -- (0,3);
\draw[orange, dashed] (1,0) -- (1,3);
\draw[orange, dashed] (2,0) -- (2,3);


\draw[gray, fill, ->, >=latex] (0,0.5) -- (0.9,1.4);
\draw[gray, fill, ->, >=latex] (1,1.5) -- (1.9,0.6);
\draw[gray, fill, ->, >=latex] (0,0.5) -- (1.9,0.5);

\draw[gray, fill, ->, >=latex] (1,1.5) -- (1.9,2.4);
\draw[gray, fill, ->, >=latex] (0,2.5) -- (0.9,1.6);
\draw[gray, fill, ->, >=latex] (0,2.5) -- (1.9,2.5);

\foreach \x in {0, 2} 
\filldraw[fill=black!100,draw=black!80] (\x,0.5) circle (2.5pt)    node[anchor=north] {\small};

\foreach \x in {1}
\filldraw[fill=black!100,draw=black!80] (\x,1.5) circle (2.5pt)    node[anchor=north] {\small };

\foreach \x in {0, 2} 
\filldraw[fill=black!100,draw=black!80] (\x,2.5) circle (2.5pt)    node[anchor=north] {\small};

\filldraw (1,0.8) node {$d$};
\filldraw (1,0.8) node {$d$};

\filldraw (1,2.2) node {${d+1}$};
\filldraw (1,2.2) node {${d+1}$};

\filldraw (2.2,0.5) node {$t$};
\filldraw (-0.4,2.5) node {$s_+$};
\filldraw (2.4,2.5) node {$t_+$};

\end{tikzpicture} \hspace{1cm}\begin{tikzpicture}[scale=1.2]

\draw[orange, dashed] (0,0) -- (2,0);
\draw[orange, dashed] (0,1) -- (2,1);
\draw[orange, dashed] (0,2) -- (2,2);
\draw[orange, dashed] (0,3) -- (2,3);

\draw[orange, dashed] (0,0) -- (0,3);
\draw[orange, dashed] (1,0) -- (1,3);
\draw[orange, dashed] (2,0) -- (2,3);


\draw[gray, fill, ->, >=latex] (0,1.5) -- (0.9,2.4);
\draw[gray, fill, ->, >=latex] (1,2.5) -- (1.9,1.6);
\draw[gray, fill, ->, >=latex] (1,0.5) -- (1.9,1.4);
\draw[gray, fill, ->, >=latex] (0,1.5) -- (0.9,0.6);
\draw[gray, fill, ->, >=latex] (0,1.5) -- (1.9,1.5);


\foreach \x in {1}
\filldraw[fill=black!100,draw=black!80] (\x,2.5) circle (2.5pt)    node[anchor=north] {\small };

\foreach \x in {0, 2} 
\filldraw[fill=black!100,draw=black!80] (\x,1.5) circle (2.5pt)    node[anchor=west] {\small};

\foreach \x in {1}
\filldraw[fill=black!100,draw=black!80] (\x,0.5) circle (2.5pt)    node[anchor=north] {\small };


\filldraw (1,1.2) node {$d$};
\filldraw (1,1.2) node {$d$};
\filldraw (1,1.8) node {${d+1}$};
\filldraw (1,1.8) node {${d+1}$};

\filldraw (2.3,1.5) node {$t$};
\end{tikzpicture}
\end{center}
\caption{Triangular tiles associated to the tiles $G_d$ and $G_{d+1}$ that share the north and south edge.}
    \label{fig:North-South}
\end{figure}

In the first case, the resulting triangular snake graph of $\mathcal{G}_{d+1}$ encompasses all the sources and sinks of $\mathcal{G}_{d}$. Additionally, two new elements are introduced: a source $s_{+}$ and a sink $t_{+}$ as depicted in the left of Figure~\ref{fig:North-South}. Conversely, in the second case, the sources and sinks in both $\mathcal{G}_{d}$ and $\mathcal{G}_{d+1}$ are equal. This distinction arises due to the structural differences between the configurations of adjoining tiles. 

On the other hand, if the tiles $G_d$ and $G_{d + 1}$ share the edge $e_d$, where $e_d$ is the east edge of $G_d$ and the west edge of $G_{d + 1}$, we obtain one of the following scenarios

\begin{figure}[ht]
\begin{center}
\begin{tikzpicture}[scale=1.2]

\draw[orange, dashed] (0,0) -- (3,0);
\draw[orange, dashed] (0,1) -- (3,1);
\draw[orange, dashed] (0,2) -- (3,2);

\draw[orange, dashed] (0,0) -- (0,2);
\draw[orange, dashed] (1,0) -- (1,2);
\draw[orange, dashed] (2,0) -- (2,2);
\draw[orange, dashed] (3,0) -- (3,2);


\draw[gray, fill, ->, >=latex] (0,0.5) -- (0.9,1.4);
\draw[gray, fill, ->, >=latex] (1,1.5) -- (1.9,0.6);
\draw[gray, fill, ->, >=latex] (0,0.5) -- (1.9,0.5);
\draw[gray, fill, ->, >=latex] (2,0.5) -- (2.9,1.4);
\draw[gray, fill, ->, >=latex] (1,1.5) -- (2.9,1.5);

\foreach \x in {0, 2} 
\filldraw[fill=black!100,draw=black!80] (\x,0.5) circle (2.5pt)    node[anchor=north] {\small};

\foreach \x in {1,3}
\filldraw[fill=black!100,draw=black!80] (\x,1.5) circle (2.5pt)    node[anchor=north] {\small };

\filldraw (1,0.8) node {$d$};
\filldraw (1,0.8) node {$d$};
\filldraw (2,1.2) node {${d+1}$};
\filldraw (2,1.2) node {${d+1}$};

\filldraw[fill=white!100,draw=red!80] (2.3,0.5) circle (4pt)    node[anchor=north] {\small };
\filldraw (2.3,0.5) node {\textcolor{red}{$t$}};
\filldraw (3.3,1.5) node {$t$};
\end{tikzpicture} \hspace{1cm}\begin{tikzpicture}[scale=1.2]

\draw[orange, dashed] (0,0) -- (3,0);
\draw[orange, dashed] (0,1) -- (3,1);
\draw[orange, dashed] (0,2) -- (3,2);

\draw[orange, dashed] (0,0) -- (0,2);
\draw[orange, dashed] (1,0) -- (1,2);
\draw[orange, dashed] (2,0) -- (2,2);
\draw[orange, dashed] (3,0) -- (3,2);


\draw[gray, fill, ->, >=latex] (1,0.5) -- (1.9,1.4);
\draw[gray, fill, ->, >=latex] (0,1.5) -- (0.9,0.6);
\draw[gray, fill, ->, >=latex] (0,1.5) -- (1.9,1.5);
\draw[gray, fill, ->, >=latex] (1,0.5) -- (2.9,0.5);
\draw[gray, fill, ->, >=latex] (2,1.5) -- (2.9,0.6);
\foreach \x in {0, 2} 
\filldraw[fill=black!100,draw=black!80] (\x,1.5) circle (2.5pt)    node[anchor=west] {\small};

\foreach \x in {1,3}
\filldraw[fill=black!100,draw=black!80] (\x,0.5) circle (2.5pt)    node[anchor=north] {\small };


\filldraw (1,1.2) node {$d$};
\filldraw (1,1.2) node {$d$};
\filldraw (2,0.8) node {${d+1}$};
\filldraw (2,0.8) node {${d+1}$};

\filldraw[fill=white!100,draw=red!80] (2.3,1.5) circle (4pt)    node[anchor=north] {\small };
\filldraw (2.3,1.5) node {\textcolor{red}{$t$}};
\filldraw (3.3,0.5) node {$t$};
\end{tikzpicture}
\end{center}
\caption{Triangular tiles associated to the tiles $G_d$ and $G_{d+1}$ that share the east and west edge.}
    \label{fig:east-west}
\end{figure}
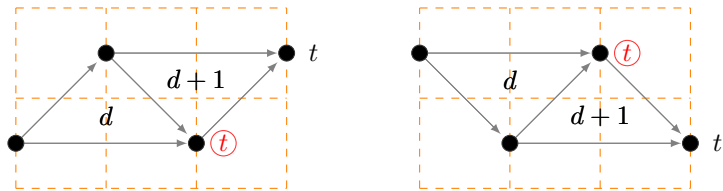

Upon close examination, it becomes evident that the element identified as the sink $t$ in $\mathcal{G}_{d}$ is no longer a sink of $\mathcal{T}_{\mathcal{G}_{d+1}}$. Instead, the southeast black point within the new triangular tile takes over the role previously attributed to the mentioned sink. 

In either case, the count of sources and sinks remains equal, thereby completing the proof.
\end{proof}

The proof of Lemma~\ref{le:equality_of_sources_and_sinks} yields an additional insight: the number of sources and sinks increases with the appearance of a specific subgraph within the triangular snake graph. This subgraph is formally defined in the following definition.

\begin{definition}\label{def:hourglass graph}
An \emph{hourglass graph} is a graph with the following structure:
\begin{center}
\begin{tikzpicture}[scale=1]
\draw[gray, fill, ->, >=latex] (0,0.5) -- (0.9,1.4);
\draw[gray, fill, ->, >=latex] (1,1.5) -- (1.9,0.6);
\draw[gray, fill, ->, >=latex] (0,0.5) -- (1.9,0.5);

\draw[gray, fill, ->, >=latex] (1,1.5) -- (1.9,2.4);
\draw[gray, fill, ->, >=latex] (0,2.5) -- (0.9,1.6);
\draw[gray, fill, ->, >=latex] (0,2.5) -- (1.9,2.5);

\foreach \x in {0, 2} 
\filldraw[fill=black!100,draw=black!80] (\x,0.5) circle (2.5pt)    node[anchor=north] {\small};

\foreach \x in {1}
\filldraw[fill=black!100,draw=black!80] (\x,1.5) circle (2.5pt)    node[anchor=north] {\small };

\foreach \x in {0, 2} 
\filldraw[fill=black!100,draw=black!80] (\x,2.5) circle (2.5pt)    node[anchor=north] {\small};

\filldraw (-1.3,1.5) node {$\hourglass_j \quad =$};
\filldraw (1.4,1.5) node {$n_j$};
\filldraw (1,0.8) node {$i$};
\filldraw (1,0.8) node {$i$};

\filldraw (1,2.2) node {${i+1}$};
\filldraw (1,2.2) node {${i+1}$};

\end{tikzpicture}
\end{center}

The vertex $n_j$ will be called the \emph{neck} of the hourglass graph $\hourglass_j$. The triangular subgraphs labeled by $i+1$ and $i$ are the \emph{head} and \emph{body} of $\hourglass_j$, respectively.
\end{definition}
As a direct consequence, we establish the following corollary.

\begin{corollary}\label{corollary:k-1 hourglass graphs}
Let $\mathcal{T}_\mathcal{G}$ be the triangular snake graph associated to $\mathcal{G}$. $\mathcal{T}_\mathcal{G}$ contains $k-1$ hourglass graphs as subgraphs if and only if the number of sources (and sinks) in $\mathcal{T}_\mathcal{G}$ is $k$, where $k\geq 1$.
\end{corollary}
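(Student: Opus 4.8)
The plan is to promote the biconditional to a single quantitative invariant and prove that invariant by induction on the number $d$ of tiles of $\mathcal{G}$. Writing $\sigma(\mathcal{G})$ for the number of sources of $\mathcal{T}_\mathcal{G}$ and $h(\mathcal{G})$ for the number of hourglass subgraphs it contains, I would show that $\sigma(\mathcal{G}) = 1 + h(\mathcal{G})$. Since Lemma~\ref{le:equality_of_sources_and_sinks} already guarantees that $\mathcal{T}_\mathcal{G}$ has equally many sources and sinks, this one identity settles both directions at once: the number of sources (and sinks) equals $k$ if and only if $h(\mathcal{G}) = k-1$.

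For the base case $d = 1$ the triangular snake graph is a single triangular tile, which has exactly one source and one sink and contains no hourglass subgraph, so $\sigma = 1 = 1 + 0 = 1 + h$. For the inductive step I would reuse verbatim the four-case analysis carried out in the proof of Lemma~\ref{le:equality_of_sources_and_sinks}, but now tracking $h$ alongside $\sigma$ as the tile $G_{d+1}$ is appended to pass from $\mathcal{G}_d$ to $\mathcal{G}_{d+1}$. Because an hourglass (Definition~\ref{def:hourglass graph}) is formed by two consecutively labelled triangular tiles, the only tiles that can combine into a \emph{new} hourglass at this step are $G_d$ and $G_{d+1}$, so at most one new hourglass can appear. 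Comparing with the figures, exactly one of the four configurations, namely the left-hand North--South case of Figure~\ref{fig:North-South}, realizes the hourglass pattern of Definition~\ref{def:hourglass graph}, with $G_d$ as body, $G_{d+1}$ as head, shared neck, and the newly created source $s_+$ and sink $t_+$; there both $\sigma$ and $h$ increase by exactly $1$, preserving the invariant. In each of the remaining three cases (the right-hand North--South case and the two East--West cases of Figures~\ref{fig:North-South} and~\ref{fig:east-west}) no two tiles form the hourglass arrow-pattern, so $h$ is unchanged, while the case analysis of Lemma~\ref{le:equality_of_sources_and_sinks} shows $\sigma$ is likewise unchanged (in the East--West cases the unique sink merely relocates to the southeast black point of the new tile). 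Hence $\sigma(\mathcal{G}_{d+1}) = 1 + h(\mathcal{G}_{d+1})$ in every case, completing the induction.

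The step requiring the most care is the claim that a new hourglass subgraph appears in precisely the configuration that produces a new source--sink pair, and in no other. I would establish this by checking against Definition~\ref{def:hourglass graph} that the pair $(G_d, G_{d+1})$ forms an hourglass exactly when $G_d$ is peak-up (neck at the top, as for the odd-indexed tiles of Remark~\ref{re:Basic_properties_T_G}) and $G_{d+1}$ is peak-down and stacked directly above it sharing that neck, which is exactly the left North--South picture, and by verifying that the remaining three local pictures either stack the tiles with the opposite orientation or join them horizontally, so that no vertex serves simultaneously as the apex of an up-triangle below and a down-triangle above. The payoff of this verification is that each hourglass is detected exactly once, at the step where its head $G_{d+1}$ is appended, giving a bijection between the hourglass subgraphs of $\mathcal{T}_\mathcal{G}$ and the inductive steps that increment $\sigma$; it is this bijection that upgrades the mere equinumerosity of sources and sinks from Lemma~\ref{le:equality_of_sources_and_sinks} to the exact count $\sigma = 1 + h$ asserted by the corollary.
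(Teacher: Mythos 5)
Your proposal is correct, and its engine --- the four local configurations analyzed in the proof of Lemma~\ref{le:equality_of_sources_and_sinks} --- is exactly the engine the paper uses; what differs is the inductive architecture. The paper runs two separate inductions, one per direction of the biconditional: for the forward direction it inducts on the number of hourglasses, truncating $\mathcal{T}_\mathcal{G}$ at the body of the last hourglass $\hourglass_k$ to obtain $\mathcal{T}_\mathcal{G}^-$, applying the inductive hypothesis there, and re-appending the remaining tiles via the Lemma's analysis; for the converse it inducts on the number of sources and asserts that the same reasoning applies. You instead run a single induction on the number of tiles $d$ and prove the stronger quantitative invariant $\sigma(\mathcal{G}) = 1 + h(\mathcal{G})$, from which both implications follow simultaneously. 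Your organization buys completeness and economy: the biconditional is settled in one pass; the observation that an hourglass can only be formed by the consecutively labelled pair $(G_d, G_{d+1})$ --- so at most one new hourglass appears per step --- is made explicit, where the paper leaves it implicit; and the correspondence between hourglasses and source--sink increments is stated as an outright bijection. What the paper's version buys is brevity, since it can defer wholesale to the Lemma's case analysis, but at the cost of leaving the reverse implication essentially asserted rather than argued. Both are valid; yours is the more self-contained proof.
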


\begin{proof}
The proof follows a similar structure as the proof of Lemma~\ref{le:equality_of_sources_and_sinks} and will only be indicated briefly. We proceed by induction based on the number of sources in $\mathcal{T}_\mathcal{G}$. For the base case, when $\mathcal{T}_\mathcal{G}$ has no hourglass graphs, the number of sources (and sinks) is $1$. When $\mathcal{T}_\mathcal{G}$ has only one hourglass graph, the number of sources (and sinks) is $2$. Assuming the validity of the statement for any triangular snake graph with $k-1$ hourglasses, we aim to establish its validity for a triangular snake graph $\mathcal{T}_\mathcal{G}$ containing $k$ hourglasses. 

Let us consider the triangular snake graph $\mathcal{T}_\mathcal{G}^-$ obtained by removing the triangular tiles labeled by $i$ from $\mathcal{T}_\mathcal{G}$, for $i > j$, where $j$ is the label of the body of the last hourglass $\hourglass_k$. Then, $\mathcal{T}_\mathcal{G}^-$ contains $k-1$ hourglass graphs. According to the induction hypothesis, $\mathcal{T}_\mathcal{G}^-$ possesses $k$ sources (and sinks). A similar analysis to that in the proof of  Lemma~\ref{le:equality_of_sources_and_sinks}, shows that $\mathcal{T}_\mathcal{G}$ has $k+1$ sources (and sinks). 
To prove the reverse implication, assume 
$\mathcal{T}_\mathcal{G}$ contains 
$k$ sources (and sinks). By induction on the number of sources, the same reasoning can be applied to show that 
$\mathcal{T}_\mathcal{G}$ contains 
$k-1$ hourglass graphs as subgraphs.
\end{proof}

Unless stated otherwise, we assume that $\mathcal{T}_\mathcal{G}$ possesses $k$ sources, denoted as $s_1, \dots, s_k$, and $k$ sinks, denoted as $t_1, \dots, t_k$. To describe the $k$-routes of $\mathcal{T}_\mathcal{G}$, these $k$-vertices of $\mathcal{T}_\mathcal{G}$ are defined as follows.

\begin{definition} \label{def:k-vertices}
The \emph{$k$-vertices} $s = (s_1,\dots, s_k)$ and $t = (t_1,\dots, t_k)$ of $\mathcal{T}_\mathcal{G}$ are defined by imposing the following conditions:
\begin{enumerate}
\item $\mathcal{T}_\mathcal{G}$ contains $k-1$ hourglass graphs as subgraphs.
\item $s = (s_1,\dots, s_k)$ and $t = (t_1,\dots, t_k)$ are ordered vectors formed by the sources and sinks in $\mathcal{T}_\mathcal{G}$, respectively.
\item If $s_i$ and $s_j$ (respectively $t_i$ and $t_j$) are vertices such that $i<j$, then the y-coordinate of $s_i$ (respectively $t_i$) is less than the y-coordinate of $s_j$ (respectively $t_j$).
\end{enumerate}
\end{definition}

\begin{example}\label{ex:SnakeGraphTilings2}
    Let $\mathcal{G}$ the snake graph in the Example~\ref{ex:SnakeGraphTilings}. The snake graph cover on the grid is depicted as follows:
\begin{center}
\begin{tikzpicture}[scale=0.8]
\draw[black,thick,->] (0,0)--(4.5,0) node[right] {}; 
\foreach \x/\xtext in {1/1, 2/2, 3/3, 4/4} 
\draw[shift={(\x,0)},black] (0pt,2pt)--(0pt,-2pt) node[below] {$\xtext$};
\foreach \y/\ytext in {1/1, 2/2, 3/3, 4/4} 
\draw[shift={(0,\y)},black] (2pt,0pt)--(-2pt,0pt) node[left] {$\ytext$};
\draw[black,thick,->] (0,0)--(0,4.5) node[left,above] {}; 

\draw[orange] (0,0) -- (2,0);
\draw[orange, dashed] (0,1) -- (3,1);
\draw[orange, dashed] (0,2) -- (3,2);
\draw[orange, dashed] (0,3) -- (3,3);

\draw[orange] (0,0) -- (0,3);
\draw[orange, dashed] (1,0) -- (1,3);
\draw[orange, dashed] (2,0) -- (2,3);
\draw[orange, dashed] (3,1) -- (3,3);

\draw[orange] (0,0) -- (2,0);
\draw[orange, dashed] (0,1) -- (3,1);
\draw[orange, dashed] (0,2) -- (3,2);
\draw[orange, dashed] (0,3) -- (3,3);

\draw[orange] (0,0) -- (0,3);
\draw[orange, dashed] (1,0) -- (1,3);
\draw[orange, dashed] (2,0) -- (2,3);
\draw[orange, dashed] (3,1) -- (3,3);

\foreach \x in {0, 2, 4} 
\filldraw[fill=black!100,draw=black!80] (\x,0.5) circle (2.5pt)    node[anchor=north] {\small };
\foreach \x in {0, 2, 4}
\filldraw[fill=black!100,draw=black!80] (\x,2.5) circle (2.5pt)    node[anchor=north] {\small };

\foreach \x in {1, 3}
\filldraw[fill=black!100,draw=black!80] (\x,1.5) circle (2.5pt)    node[anchor=north] {\small };
\foreach \x in {1, 3}
\filldraw[fill=black!100,draw=black!80] (\x,3.5) circle (2.5pt)    node[anchor=north] {\small };
\end{tikzpicture} 
\end{center}

The snake graph tilings are

\begin{center}
\begin{tikzpicture}[scale=0.8]
\draw[orange] (0,0) -- (2,0);
\draw[orange] (0,1) -- (3,1);
\draw[orange] (1,2) -- (3,2);
\draw[orange] (0,3) -- (3,3);

\draw[orange] (0,0) -- (0,3);
\draw[orange] (1,1) -- (1,3);
\draw[orange] (2,0) -- (2,1);
\draw[orange] (3,1) -- (3,3);

\draw[orange] (0,0) -- (2,0);
\draw[orange] (0,1) -- (3,1);
\draw[orange] (1,2) -- (3,2);
\draw[orange] (0,3) -- (3,3);

\draw[orange] (0,0) -- (0,3);
\draw[orange] (1,1) -- (1,3);
\draw[orange] (2,0) -- (2,1);
\draw[orange] (3,1) -- (3,3);

\foreach \x in {0, 2} 
\filldraw[fill=black!100,draw=black!80] (\x,0.5) circle (2.5pt)    node[anchor=north] {\small };
\foreach \x in {0, 2}
\filldraw[fill=black!100,draw=black!80] (\x,2.5) circle (2.5pt)    node[anchor=north] {\small };

\foreach \x in {1, 3}
\filldraw[fill=black!100,draw=black!80] (\x,1.5) circle (2.5pt)    node[anchor=north] {\small };

\end{tikzpicture} \hspace{1cm} \begin{tikzpicture}[scale=0.8]

\draw[orange] (0,0) -- (2,0);
\draw[orange] (0,1) -- (3,1);
\draw[orange] (0,3) -- (3,3);

\draw[orange] (0,0) -- (0,3);
\draw[orange] (1,1) -- (1,3);
\draw[orange] (2,0) -- (2,3);
\draw[orange] (3,1) -- (3,3);

\draw[orange] (0,0) -- (2,0);
\draw[orange] (0,1) -- (3,1);
\draw[orange] (0,3) -- (3,3);

\draw[orange] (0,0) -- (0,3);
\draw[orange] (1,1) -- (1,3);
\draw[orange] (2,0) -- (2,3);
\draw[orange] (3,1) -- (3,3);

\foreach \x in {0, 2} 
\filldraw[fill=black!100,draw=black!80] (\x,0.5) circle (2.5pt)    node[anchor=north] {\small };
\foreach \x in {0, 2}
\filldraw[fill=black!100,draw=black!80] (\x,2.5) circle (2.5pt)    node[anchor=north] {\small };

\foreach \x in {1, 3}
\filldraw[fill=black!100,draw=black!80] (\x,1.5) circle (2.5pt)    node[anchor=north] {\small };
\end{tikzpicture}  \hspace{1cm} \begin{tikzpicture}[scale=0.8]

\draw[orange] (0,0) -- (2,0);
\draw[orange] (0,1) -- (3,1);
\draw[orange] (0,2) -- (2,2);
\draw[orange] (0,3) -- (3,3);

\draw[orange] (0,0) -- (0,3);
\draw[orange] (2,0) -- (2,3);
\draw[orange] (3,1) -- (3,3);

\draw[orange] (0,0) -- (2,0);
\draw[orange] (0,1) -- (3,1);
\draw[orange] (0,2) -- (2,2);
\draw[orange] (0,3) -- (3,3);

\draw[orange] (0,0) -- (0,3);
\draw[orange] (2,0) -- (2,3);
\draw[orange] (3,1) -- (3,3);

\foreach \x in {0, 2} 
\filldraw[fill=black!100,draw=black!80] (\x,0.5) circle (2.5pt)    node[anchor=north] {\small };
\foreach \x in {0, 2}
\filldraw[fill=black!100,draw=black!80] (\x,2.5) circle (2.5pt)    node[anchor=north] {\small };

\foreach \x in {1, 3}
\filldraw[fill=black!100,draw=black!80] (\x,1.5) circle (2.5pt)    node[anchor=north] {\small };
\end{tikzpicture} \hspace{1cm} \begin{tikzpicture}[scale=0.8]

\draw[orange] (0,0) -- (2,0);
\draw[orange] (2,1) -- (3,1);
\draw[orange] (0,2) -- (2,2);
\draw[orange] (0,3) -- (3,3);

\draw[orange] (0,0) -- (0,3);
\draw[orange] (1,0) -- (1,2);
\draw[orange] (2,0) -- (2,3);
\draw[orange] (3,1) -- (3,3);

\draw[orange] (0,0) -- (2,0);
\draw[orange] (2,1) -- (3,1);
\draw[orange] (0,2) -- (2,2);
\draw[orange] (0,3) -- (3,3);

\draw[orange] (0,0) -- (0,3);
\draw[orange] (1,0) -- (1,2);
\draw[orange] (2,0) -- (2,3);
\draw[orange] (3,1) -- (3,3);

\foreach \x in {0, 2} 
\filldraw[fill=black!100,draw=black!80] (\x,0.5) circle (2.5pt)    node[anchor=north] {\small };
\foreach \x in {0, 2}
\filldraw[fill=black!100,draw=black!80] (\x,2.5) circle (2.5pt)    node[anchor=north] {\small };

\foreach \x in {1, 3}
\filldraw[fill=black!100,draw=black!80] (\x,1.5) circle (2.5pt)    node[anchor=north] {\small };
\end{tikzpicture}
\end{center}

Then, we obtain the following decorated snake graph tilings associated to perfect matchings depicted in Example~\ref{ex:snake_graph_tiling_matching}

\begin{center}
\begin{tikzpicture}[scale=0.8]
\draw[orange] (0,0) -- (2,0);
\draw[orange] (0,1) -- (3,1);
\draw[orange] (1,2) -- (3,2);
\draw[orange] (0,3) -- (3,3);

\draw[orange] (0,0) -- (0,3);
\draw[orange] (1,1) -- (1,3);
\draw[orange] (2,0) -- (2,1);
\draw[orange] (3,1) -- (3,3);

\draw[orange] (0,0) -- (2,0);
\draw[orange] (0,1) -- (3,1);
\draw[orange] (1,2) -- (3,2);
\draw[orange] (0,3) -- (3,3);

\draw[orange] (0,0) -- (0,3);
\draw[orange] (1,1) -- (1,3);
\draw[orange] (2,0) -- (2,1);
\draw[orange] (3,1) -- (3,3);

\draw[red, fill, ->, >=latex] (0,0.5) -- (1.9,0.5);
\draw[red, fill, ->, >=latex] (1,1.5) -- (2.9,1.5);
\draw[red, fill, ->, >=latex] (0,2.5) -- (0.9,1.6);

\foreach \x in {0, 2} 
\filldraw[fill=black!100,draw=black!80] (\x,0.5) circle (2.5pt)    node[anchor=north] {\small };
\foreach \x in {0, 2}
\filldraw[fill=black!100,draw=black!80] (\x,2.5) circle (2.5pt)    node[anchor=north] {\small };

\foreach \x in {1, 3}
\filldraw[fill=black!100,draw=black!80] (\x,1.5) circle (2.5pt)    node[anchor=north] {\small };

\end{tikzpicture} \hspace{1cm} \begin{tikzpicture}[scale=0.8]
\draw[orange] (0,0) -- (2,0);
\draw[orange] (0,1) -- (3,1);
\draw[orange] (0,3) -- (3,3);

\draw[orange] (0,0) -- (0,3);
\draw[orange] (1,1) -- (1,3);
\draw[orange] (2,0) -- (2,3);
\draw[orange] (3,1) -- (3,3);

\draw[orange] (0,0) -- (2,0);
\draw[orange] (0,1) -- (3,1);
\draw[orange] (0,3) -- (3,3);

\draw[orange] (0,0) -- (0,3);
\draw[orange] (1,1) -- (1,3);
\draw[orange] (2,0) -- (2,3);
\draw[orange] (3,1) -- (3,3);

\draw[red, fill, ->, >=latex] (0,0.5) -- (1.9,0.5);
\draw[red, fill, ->, >=latex] (0,2.5) -- (0.9,1.6);
\draw[red, fill, ->, >=latex] (2,2.5) -- (2.9,1.6);
\draw[red, fill, ->, >=latex] (1,1.5) -- (1.9,2.4);

\foreach \x in {0, 2} 
\filldraw[fill=black!100,draw=black!80] (\x,0.5) circle (2.5pt)    node[anchor=north] {\small };
\foreach \x in {0, 2}
\filldraw[fill=black!100,draw=black!80] (\x,2.5) circle (2.5pt)    node[anchor=north] {\small };

\foreach \x in {1, 3}
\filldraw[fill=black!100,draw=black!80] (\x,1.5) circle (2.5pt)    node[anchor=north] {\small };

\end{tikzpicture} \hspace{1cm} \begin{tikzpicture}[scale=0.8]

\draw[orange] (0,0) -- (2,0);
\draw[orange] (0,1) -- (3,1);
\draw[orange] (0,2) -- (2,2);
\draw[orange] (0,3) -- (3,3);

\draw[orange] (0,0) -- (0,3);
\draw[orange] (2,0) -- (2,3);
\draw[orange] (3,1) -- (3,3);

\draw[orange] (0,0) -- (2,0);
\draw[orange] (0,1) -- (3,1);
\draw[orange] (0,2) -- (2,2);
\draw[orange] (0,3) -- (3,3);

\draw[orange] (0,0) -- (0,3);
\draw[orange] (2,0) -- (2,3);
\draw[orange] (3,1) -- (3,3);

\draw[red, fill, ->, >=latex] (0,0.5) -- (1.9,0.5);
\draw[red, fill, ->, >=latex] (0,2.5) -- (1.9,2.5);
\draw[red, fill, ->, >=latex] (2,2.5) -- (2.9,1.6);

\foreach \x in {0, 2} 
\filldraw[fill=black!100,draw=black!80] (\x,0.5) circle (2.5pt)    node[anchor=north] {\small };
\foreach \x in {0, 2}
\filldraw[fill=black!100,draw=black!80] (\x,2.5) circle (2.5pt)    node[anchor=north] {\small };

\foreach \x in {1, 3}
\filldraw[fill=black!100,draw=black!80] (\x,1.5) circle (2.5pt)    node[anchor=north] {\small };
\end{tikzpicture} \hspace{1cm} \begin{tikzpicture}[scale=0.8]

\draw[orange] (0,0) -- (2,0);
\draw[orange] (2,1) -- (3,1);
\draw[orange] (0,2) -- (2,2);
\draw[orange] (0,3) -- (3,3);

\draw[orange] (0,0) -- (0,3);
\draw[orange] (1,0) -- (1,2);
\draw[orange] (2,0) -- (2,3);
\draw[orange] (3,1) -- (3,3);

\draw[orange] (0,0) -- (2,0);
\draw[orange] (2,1) -- (3,1);
\draw[orange] (0,2) -- (2,2);
\draw[orange] (0,3) -- (3,3);

\draw[orange] (0,0) -- (0,3);
\draw[orange] (1,0) -- (1,2);
\draw[orange] (2,0) -- (2,3);
\draw[orange] (3,1) -- (3,3);

\draw[red, fill, ->, >=latex] (0,2.5) -- (1.9,2.5);
\draw[red, fill, ->, >=latex] (2,2.5) -- (2.9,1.6);
\draw[red, fill, ->, >=latex] (0,0.5) -- (0.9,1.4);
\draw[red, fill, ->, >=latex] (1,1.5) -- (1.9,0.6);

\foreach \x in {0, 2} 
\filldraw[fill=black!100,draw=black!80] (\x,0.5) circle (2.5pt)    node[anchor=north] {\small };
\foreach \x in {0, 2}
\filldraw[fill=black!100,draw=black!80] (\x,2.5) circle (2.5pt)    node[anchor=north] {\small };

\foreach \x in {1, 3}
\filldraw[fill=black!100,draw=black!80] (\x,1.5) circle (2.5pt)    node[anchor=north] {\small };

\end{tikzpicture}
\end{center}
and the triangular snake graph $\mathcal{T}_\mathcal{G}$ with its $2$-vertices, $s=(s_1,s_2)$ and $t=(t_1,t_2)$, is

\begin{center}
\begin{tikzpicture}[scale=1]
\draw[orange, dashed] (0,0) -- (2,0);
\draw[orange, dashed] (0,1) -- (3,1);
\draw[orange, dashed] (0,2) -- (3,2);
\draw[orange, dashed] (0,3) -- (3,3);

\draw[orange, dashed] (0,0) -- (0,3);
\draw[orange, dashed] (1,0) -- (1,3);
\draw[orange, dashed] (2,0) -- (2,3);
\draw[orange, dashed] (3,1) -- (3,3);

\draw[orange, dashed] (0,0) -- (2,0);
\draw[orange, dashed] (0,1) -- (3,1);
\draw[orange, dashed] (0,2) -- (3,2);
\draw[orange, dashed] (0,3) -- (3,3);

\draw[orange, dashed] (0,0) -- (0,3);
\draw[orange, dashed] (1,0) -- (1,3);
\draw[orange, dashed] (2,0) -- (2,3);
\draw[orange, dashed] (3,1) -- (3,3);

\draw[gray, fill, ->, >=latex] (0,2.5) -- (1.9,2.5);
\draw[gray, fill, ->, >=latex] (2,2.5) -- (2.9,1.6);
\draw[gray, fill, ->, >=latex] (0,0.5) -- (0.9,1.4);
\draw[gray, fill, ->, >=latex] (1,1.5) -- (1.9,0.6);
\draw[gray, fill, ->, >=latex] (1,1.5) -- (2.9,1.5);
\draw[gray, fill, ->, >=latex] (0,0.5) -- (1.9,0.5);
\draw[gray, fill, ->, >=latex] (0,2.5) -- (0.9,1.6);
\draw[gray, fill, ->, >=latex] (1,1.5) -- (1.9,2.4);

\foreach \x in {0, 2} 
\filldraw[fill=black!100,draw=black!80] (\x,0.5) circle (2.5pt)    node[anchor=north] {\small };
\foreach \x in {0, 2}
\filldraw[fill=black!100,draw=black!80] (\x,2.5) circle (2.5pt)    node[anchor=north] {\small };

\foreach \x in {1,3}
\filldraw[fill=black!100,draw=black!80] (\x,1.5) circle (2.5pt)    node[anchor=north] {\small };

\filldraw (-0.4,0.5) node {$s_1$};
\filldraw (-0.4,2.5) node {$s_2$};
\filldraw (2.4,0.5) node {$t_1$};
\filldraw (3.4,1.5) node {$t_2$};
\end{tikzpicture} 
\end{center}

We can observe that each decorated snake graph tiling contains two paths that do not intersect. Additionally, the paths start at vertex $s_i$ and end at vertex $t_i$, for some $i\in\{1,2\}$. 
\end{example}

\begin{lemma}\label{lemma:Perfect_matchings_to_routes}
Let $P\in \text{Match}(\mathcal{G})$. Then the decorated snake graph cover associated to $P$ represents a $k$-route from $s$ to $t$ in $\mathcal{T}_\mathcal{G}$.
\end{lemma}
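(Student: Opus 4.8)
The plan is to pass to the tiling picture and argue by a local in/out-degree computation at each black point of $\mathcal{T}_\mathcal{G}$. By Proposition~\ref{prop:snake_graph_tilings_and_perfect_matchings}, $P$ corresponds to a snake graph tiling of $T(\mathcal{G})$; decorating each of its dominoes as in Figure~\ref{fig:decorated_dominoes} attaches to every non-empty domino exactly one directed edge of $\mathcal{T}_\mathcal{G}$ (a $(2,0)$, $(1,1)$ or $(1,-1)$ step), while each empty domino contributes no edge. Write $A(P)$ for the resulting set of arrows. I would first record the combinatorial content of the decoration: every arrow runs from the black point of the left-decorated square of its domino to the black point of the right-decorated square, so each edge strictly increases the $x$-coordinate; in particular $A(P)$ inherits acyclicity from $\mathcal{T}_\mathcal{G}$, and along any directed path in $A(P)$ the $x$-coordinate is strictly increasing.

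The heart of the argument is to compute, for each black point $v$, its in- and out-degree inside $A(P)$. I would use that $v$ is the decoration point of at most two unit squares: the R-decorated square immediately to its left (if it lies in $T(\mathcal{G})$) and the L-decorated square immediately to its right. Since an arrow can enter $v$ only from the domino covering its left square, and leave $v$ only from the domino covering its right square, both degrees are at most $1$. I would then split into cases according to how each relevant square is paired in the tiling. For a source $v$ (by Remark~\ref{re:Basic_properties_T_G} it lies on the left side of its domino and belongs to a single square $A$, which therefore has no left neighbour in $T(\mathcal{G})$), the domino covering $A$ must pair it rightward, upward or downward, and each possibility produces exactly one arrow leaving $v$; hence $v$ has out-degree $1$ and in-degree $0$. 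Dually every sink has in-degree $1$ and out-degree $0$. For an interior point $v$, sitting between an R-decorated square $B$ on the left and an L-decorated square $A$ on the right, either the tiling covers $B$ and $A$ by a single horizontal domino — which is precisely the empty domino, leaving $v$ of in- and out-degree $0$ — or $B$ and $A$ lie in different dominoes, in which case the domino of $B$ forces an arrow into $v$ and the domino of $A$ forces an arrow out of $v$, giving in- and out-degree $1$.

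With these bounds established, I would extract the structural conclusion. Every vertex of $A(P)$ has in-degree $\le 1$ and out-degree $\le 1$, and $A(P)$ is acyclic, so $A(P)$ is a disjoint union of directed paths together with isolated vertices (exactly the empty-domino centres). The non-isolated vertices of in-degree $0$ are precisely the sources and those of out-degree $0$ precisely the sinks, so every maximal path starts at a source and ends at a sink. Since $\mathcal{T}_\mathcal{G}$ has equally many sources and sinks, say $k$, by Lemma~\ref{le:equality_of_sources_and_sinks} and Corollary~\ref{corollary:k-1 hourglass graphs}, and since each source begins exactly one path while distinct paths terminate at distinct sinks, $A(P)$ consists of exactly $k$ pairwise vertex-disjoint directed paths, one out of each source into a sink; vertex-disjointness is immediate from the degree-$\le 1$ bounds.

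It remains to identify the induced source–sink bijection with the identity, i.e.\ that the path from $s_i$ ends at $t_i$ under the height-orderings of Definition~\ref{def:k-vertices}. I would obtain this from planarity: the $k$ paths are embedded in the plane and monotone in $x$, the sources all lie on left-hand sides and the sinks on right-hand sides (Remark~\ref{re:Basic_properties_T_G}), so two vertex-disjoint monotone paths whose endpoints are interleaved in height would be forced to meet; hence the bijection preserves the height order, which is exactly the identity under the labelling of Definition~\ref{def:k-vertices}. Therefore $A(P)=(p_1,\dots,p_k)$ with $p_i$ running from $s_i$ to $t_i$, a $k$-route from $s$ to $t$. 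The step I expect to be the main obstacle is the interior-vertex case analysis: one must verify that the \emph{only} way an interior black point escapes lying on a path is the empty-domino configuration, and that in every other pairing the incoming and outgoing arrows appear simultaneously, so that no path can stall at an interior vertex — this is what guarantees that the arrows link up into genuine source-to-sink paths rather than fragments.
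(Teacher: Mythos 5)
The first two-thirds of your argument is correct, and it is genuinely different from the paper's proof: where the paper argues that each maximal path in the decorated tiling must start at a source and end at a sink (by a maximality-plus-contradiction analysis) and gets vertex-disjointness from non-overlapping dominoes, your local in/out-degree computation at each black point delivers the whole structure at once — $A(P)$ is a disjoint union of source-to-sink paths and isolated empty-domino centres, pairwise disjoint by the degree bounds. That part is clean and complete, and your case analysis at interior points (empty domino versus simultaneous in- and out-arrow) is exactly right.

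The gap is in the last step, which is also the crux of the lemma: showing that the induced bijection sends $s_i$ to $t_i$. The principle you invoke — ``two vertex-disjoint $x$-monotone paths whose endpoints are interleaved in height are forced to meet'' — is false in general: a long monotone path can pass entirely below a short one (take a path from $(0,0)$ to $(10,10)$ that stays at height $0$ until $x=8$, against a path from $(4,5)$ to $(6,5)$); interleaving of endpoint heights says nothing unless you also control the relative heights of the two paths at the ends of their \emph{common} $x$-interval, which you never establish. What actually makes the conclusion true here is structure you do not use: the hourglass necks are cut vertices of $\mathcal{T}_\mathcal{G}$, and by Lemma~\ref{le:equality_of_sources_and_sinks} and Corollary~\ref{corollary:k-1 hourglass graphs} the $k-1$ necks separate $\mathcal{T}_\mathcal{G}$ into $k$ pieces, the $i$-th containing exactly $s_i$ and $t_i$, so that any path between the $i$-th and $j$-th pieces must pass through every intermediate neck. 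If the bijection were not the identity, take the minimal $i$ with $\sigma(i)\neq i$ (so $\sigma(i)>i$) and $j=\sigma^{-1}(i)>i$; then both $p_i$ (going from piece $i$ to piece $\sigma(i)\geq i+1$) and $p_j$ (going from piece $j\geq i+1$ down to piece $i$) must pass through the single neck vertex $n_i$, contradicting disjointness. The paper instead closes this step by a parity argument on the matching itself (Figure~\ref{fig:Non-perfect_matchings}): a path arising from a perfect matching can never traverse a neck while changing levels, because the corresponding edge configuration would leave unmatched vertices on both sides; this is stronger, since it shows each path stays in its own piece. Either repair works, but as written your planarity appeal does not.
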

\begin{proof}
Let $p=(v_{l+1}|\alpha_l,\alpha_{l-1},\dots,\alpha_1|v_1)$ be a maximal path in the decorated snake graph tiling associated to $P\in \text{Match}(\mathcal{G})$ such that $p$ starts at vertex $v_1$ and ends at vertex $v_{l+1}$. There exist $l$ decorated domino tiles with steps $(2,0)$, $(1,1)$ or $(1,-1)$ in such a way that every arrow $\alpha_i$ of $p$ corresponds to one of these decorated domino tiles. We claim that $v_1$ is a source and $v_{l+1}$ is a sink in $\mathcal{T}_\mathcal{G}$. Otherwise, if $v_1$ is not a source, then it would be on the right-hand side of a domino tile $D_{v_1}$, leading to two possible cases:

\begin{enumerate}
    \item $D_{v_1}$ is an empty domino. This case contradicts the requirement that consecutive horizontal black points should maintain a distance of $2$ units, not one.
    \item $D_{v_1}$ is a $(2,0)$, $(1,1)$, or $(1,-1)$ domino. This case breaches the requirement that $p$ is a maximal path starting at $v_1$ in the decorated snake graph tiling associated to $P$.
\end{enumerate}

Similarly, if $v_{l+1}$ is not a sink, it would be on the left-hand side of a domino tile $D_{v_{l+1}}$, and the same two cases apply as for $v_1$. Consequently, there must exist $i,j\in [k]$ such that $v_1=s_i$ and $v_{l+1}=t_j$. 

It is now necessary to prove that indeed $i=j$. Suppose $i\neq j$. Then, $p$ traverses an hourglass graph in one of the three ways shown on the left of Figure~\ref{fig:Non-perfect_matchings}.

\begin{figure}[ht]
\begin{center}
\begin{tikzpicture}[scale=0.8]

\draw[red, thick, fill, ->, >=latex] (0,0.5) -- (0.9,1.4);
\draw[gray, fill, ->, >=latex] (1,1.5) -- (1.9,0.6);
\draw[gray, fill, ->, >=latex] (0,0.5) -- (1.9,0.5);

\draw[red, thick, fill, ->, >=latex] (1,1.5) -- (1.9,2.4);
\draw[gray, fill, ->, >=latex] (0,2.5) -- (0.9,1.6);
\draw[gray, fill, ->, >=latex] (0,2.5) -- (1.9,2.5);


\draw[dashed, gray] (-1.5,0) -- (0,0.5);

\draw[dashed, gray] (3.5,3) -- (2,2.5);
\filldraw (-1.8,-0.1) node {$s_{k}$};
\filldraw (2.4,0.5) node {$t_{k}$};
\filldraw (-0.6,2.5) node {$s_{k+1}$};
\filldraw (3.9,3.1) node {$ t_{k+1}$};

\foreach \x in {0, 2} 
\filldraw[fill=black!100,draw=black!80] (\x,0.5) circle (2.5pt)    node[anchor=north] {\small};

\foreach \x in {1}
\filldraw[fill=black!100,draw=black!80] (\x,1.5) circle (2.5pt)    node[anchor=north] {\small };

\foreach \x in {0, 2} 
\filldraw[fill=black!100,draw=black!80] (\x,2.5) circle (2.5pt)    node[anchor=north] {\small};

\end{tikzpicture} \hspace{1cm} \begin{tikzpicture}[scale=1]

\draw[red, ultra thick] (0,0) -- (0,1);
\draw[gray] (1,0) -- (1,1);
\draw[gray] (1,1) -- (0,1);
\draw[gray] (0,0) -- (1,0);


\draw[gray] (0,1) -- (0,2);
\draw[red, ultra thick] (1,1) -- (1,2);
\draw[gray] (1,2) -- (0,2);
\draw[gray] (0,1) -- (1,1);

\draw[dashed, gray] (-1.5,-0.5) -- (0,0);
\draw[dashed, gray] (2.5,2.5) -- (1,2);

\filldraw[fill=white!40,draw=black!80] (0,0) circle (2pt);
\filldraw[fill=white!40,draw=black!80] (1,0) circle (2pt);
\filldraw[fill=white!40,draw=black!80] (0,1) circle (2pt);
\filldraw[fill=white!40,draw=black!80] (1,1) circle (2pt);
\filldraw[fill=white!40,draw=black!80] (0,2) circle (2pt);
\filldraw[fill=white!40,draw=black!80] (1,2) circle (2pt);

\end{tikzpicture}
\end{center}

\begin{center}
\begin{tikzpicture}[scale=0.8]

\draw[gray, fill, ->, >=latex] (0,0.5) -- (0.9,1.4);
\draw[red, thick, fill, ->, >=latex] (1,1.5) -- (1.9,0.6);
\draw[gray, fill, ->, >=latex] (0,0.5) -- (1.9,0.5);

\draw[gray, fill, ->, >=latex] (1,1.5) -- (1.9,2.4);
\draw[red, thick, fill, ->, >=latex] (0,2.5) -- (0.9,1.6);
\draw[gray, fill, ->, >=latex] (0,2.5) -- (1.9,2.5);


\draw[dashed, gray] (-1.5,0) -- (0,0.5);

\draw[dashed, gray] (3.5,3) -- (2,2.5);
\filldraw (-1.8,-0.1) node {$s_{k}$};
\filldraw (2.4,0.5) node {$t_{k}$};
\filldraw (-0.6,2.5) node {$s_{k+1}$};
\filldraw (3.9,3.1) node {$ t_{k+1}$};

\foreach \x in {0, 2} 
\filldraw[fill=black!100,draw=black!80] (\x,0.5) circle (2.5pt)    node[anchor=north] {\small};

\foreach \x in {1}
\filldraw[fill=black!100,draw=black!80] (\x,1.5) circle (2.5pt)    node[anchor=north] {\small };

\foreach \x in {0, 2} 
\filldraw[fill=black!100,draw=black!80] (\x,2.5) circle (2.5pt)    node[anchor=north] {\small};

\end{tikzpicture}\hspace{1cm} \begin{tikzpicture}[scale=1]

\draw[gray] (0,0) -- (0,1);
\draw[red, ultra thick] (1,0) -- (1,1);
\draw[gray] (1,1) -- (0,1);
\draw[gray] (0,0) -- (1,0);


\draw[red, ultra thick] (0,1) -- (0,2);
\draw[gray] (1,1) -- (1,2);
\draw[gray] (1,2) -- (0,2);
\draw[gray] (0,1) -- (1,1);


\draw[dashed, gray] (-1.5,-0.5) -- (0,0);
\draw[dashed, gray] (2.5,2.5) -- (1,2);

\filldraw[fill=white!40,draw=black!80] (0,0) circle (2pt);
\filldraw[fill=white!40,draw=black!80] (1,0) circle (2pt);
\filldraw[fill=white!40,draw=black!80] (0,1) circle (2pt);
\filldraw[fill=white!40,draw=black!80] (1,1) circle (2pt);
\filldraw[fill=white!40,draw=black!80] (0,2) circle (2pt);
\filldraw[fill=white!40,draw=black!80] (1,2) circle (2pt);

\end{tikzpicture}
\end{center}

\begin{center}
\begin{tikzpicture}[scale=0.8]


\draw[gray, fill, ->, >=latex] (0,0.5) -- (0.9,1.4);
\draw[gray, fill, ->, >=latex] (1,1.5) -- (1.9,0.6);
\draw[gray, fill, ->, >=latex] (0,0.5) -- (1.9,0.5);

\draw[red, thick, fill, ->, >=latex] (1,1.5) -- (1.9,2.4);
\draw[gray, fill, ->, >=latex] (0,2.5) -- (0.9,1.6);
\draw[gray, fill, ->, >=latex] (0,2.5) -- (1.9,2.5);

\draw[gray, fill, ->, >=latex] (-1,1.5) -- (-0.1,0.6);
\draw[red, thick, fill, ->, >=latex] (-1,1.5) -- (0.9,1.5);


\draw[dashed, gray] (3.5,3) -- (2,2.5);

\draw[dashed, gray] (-2.5,0.5) -- (-1,1.4);

\filldraw (-2.8,0.5) node {$s_{k}$};
\filldraw (2.4,0.5) node {$t_{k}$};
\filldraw (-0.6,2.5) node {$s_{k+1}$};
\filldraw (3.9,3.1) node {$ t_{k+1}$};

\foreach \x in {0, 2} 
\filldraw[fill=black!100,draw=black!80] (\x,0.5) circle (2.5pt)    node[anchor=north] {\small};

\foreach \x in {-1, 1}
\filldraw[fill=black!100,draw=black!80] (\x,1.5) circle (2.5pt)    node[anchor=north] {\small };

\foreach \x in {0, 2} 
\filldraw[fill=black!100,draw=black!80] (\x,2.5) circle (2.5pt)    node[anchor=north] {\small};

\end{tikzpicture} \hspace{1cm} \begin{tikzpicture}[scale=1]

\draw[gray] (0,0) -- (0,1);
\draw[gray] (1,0) -- (1,1);
\draw[gray] (1,1) -- (0,1);
\draw[gray] (0,0) -- (1,0);


\draw[gray] (0,1) -- (0,2);
\draw[red, ultra thick] (1,1) -- (1,2);
\draw[gray] (1,2) -- (0,2);
\draw[gray] (0,1) -- (1,1);


\draw[red, ultra thick] (-1,1) -- (0,1);
\draw[gray] (-1,1) -- (-1,0);
\draw[gray] (-1,0) -- (0,0);

\draw[dashed, gray] (-2.5,-0.5) -- (-1,0);
\draw[dashed, gray] (2.5,2.5) -- (1,2);

\filldraw[fill=white!40,draw=black!80] (-1,0) circle (2pt);
\filldraw[fill=white!40,draw=black!80] (-1,1) circle (2pt);

\filldraw[fill=white!40,draw=black!80] (0,0) circle (2pt);
\filldraw[fill=white!40,draw=black!80] (1,0) circle (2pt);
\filldraw[fill=white!40,draw=black!80] (0,1) circle (2pt);
\filldraw[fill=white!40,draw=black!80] (1,1) circle (2pt);
\filldraw[fill=white!40,draw=black!80] (0,2) circle (2pt);
\filldraw[fill=white!40,draw=black!80] (1,2) circle (2pt);
\end{tikzpicture}
\end{center}
\caption{The 3 cases in which $p$ is a path starting at $s_i$ and ending at $t_j$, $i\neq j$.}
    \label{fig:Non-perfect_matchings}
\end{figure}
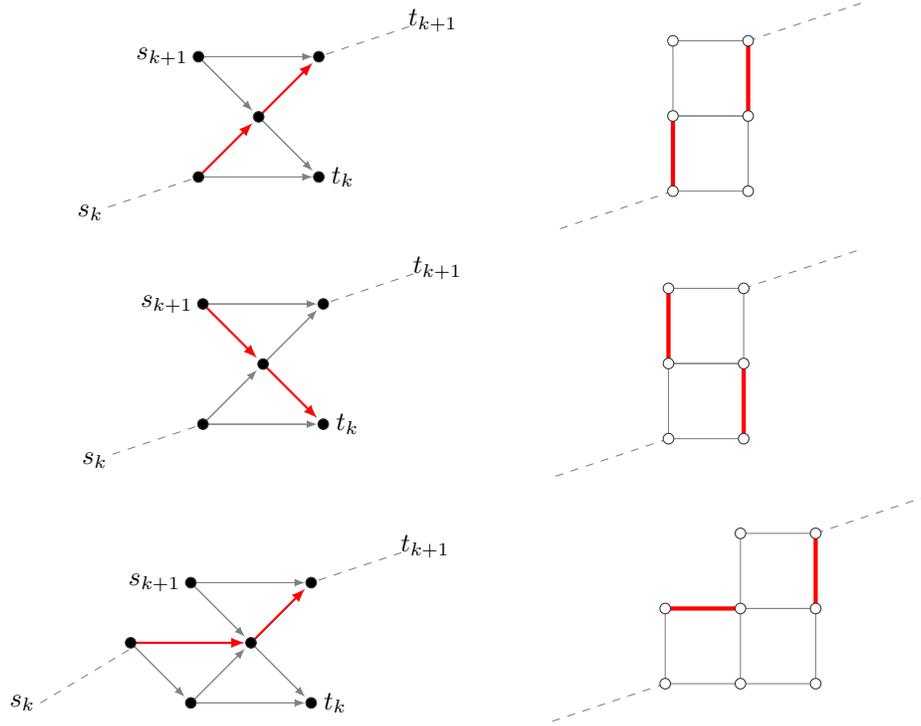

If any of these cases were to occur, the perfect matching $P$ would have the edge configurations illustrated in one of the three cases on the right side of Figure~\ref{fig:Non-perfect_matchings}. However, by parity considerations, this situation is not possible, as both the right and left sides of $P$ would inevitably contain at least one unmatched vertex. Consequently, none of these cases results in a valid perfect matching, affirming that any path $p$ in the decorated snake graph tiling associated to a perfect matching $P$ starts at a source $s_i$ and ends at a sink $t_i$.

Moreover, consider two paths $p$ and $p^\prime$ where $p$ starts at $s_i$ and ends at $t_i$, while $p^\prime$ starts at $s_j$ and ends at $t_j$. If $s_i\neq s_j$ and $t_i\neq t_j$, their intersection at a vertex $v$ implies the existence of $\alpha$ and $\alpha^\prime$ on $p$ and $p^\prime$, respectively, such that $v$ is a common vertex of both arrows. This contradicts the requirement that the domino tiles containing $\alpha$ and $\alpha^\prime$ must be non-overlapping, as imposed by the snake graph tiling. 

Hence, we have proved that the paths in the snake graph tiling are paths exclusively from $s_i$ to $t_i$, for $i\in [k]$, and these paths are vertex-disjoint. Therefore, each perfect matching provides a unique $k$-route from $s$ to $t$ in $\mathcal{T}_\mathcal{G}$, thereby completing the proof. 
\end{proof}

\begin{remark}
It is evident that if we consider two distinct perfect matchings, their respective $k$-routes will differ. Specifically, if $e$ is an edge present in the perfect matching $P$ but absent in the perfect matching $P^\prime$, then the $k$-route $R_P$ will contain a decorated domino (either an empty, $(2,0)$, $(1,1)$ or a $(1,-1)$ domino) that is not present in $R_{P^\prime}$. In the cases where the domino in $R_P$ that is absent in $R_{P^\prime}$ is a $(2,0)$, $(1,-1)$, or $(1,1)$ domino, then the corresponding arrow will be absent in $R_{P^\prime}$. In the case where the empty edge formed by squares centered on the vertices $v_1$ and $v_2$ (where they are right-decorated and left-decorated squares, respectively) is present in $R_P$, we observe that in $R_{P^\prime}$ there exist two dominoes ($(2,0)$, $(1,1)$ or a $(1,-1)$) $D_1$ and $D_2$, in such a way that the square centered at $v_1$ is in $D_1$, and the square centered at $v_2$ is in $D_2$. Consequently, the arrows corresponding to $D_1$ and $D_2$ are present in $R_{P^\prime}$ but absent in $R_P$.
\end{remark}

\subsection{Edge contraction in snake graphs}\label{sec:edge_contraction}

Now, we are going to consider Remark~\ref{re:Basic_properties_T_G}. Again we consider a snake graph $\mathcal{G}$, which is built by the sequence of tiles $G_1, G_2,\dots, G_{d}$, with $d\geq 1$. If $d$ is an even number (resp. an odd number), then tiles $G_{2i-1}$, with $i\in \{1,2,\dots, \lfloor d/2\rfloor\}$ (resp. $i\in \{1,2,\dots, \lfloor d/2\rfloor+1\} $) will be identified with the subgraph
\begin{center}
\begin{tikzpicture}[domain=0:4, scale=1]
  
  \node (1) at (0,0) {$\bullet$};
  \node (1) at (1,1) {$\bullet$};
  \node (1) at (2,0) {$\bullet$};
  \draw[->, >=latex] (0.1,0) -- (1.9,0); 
  \node (1) at (1,0.35) {\small $2i-1$};
  \draw[->, >=latex] (0,0) -- (0.9,0.9); 
  \draw[->, >=latex] (1,1) -- (1.9,0.1); 
  
\end{tikzpicture}
\end{center}
and the tiles $G_{2i}$, with $i\in \{1,2,\dots, \lfloor d/2\rfloor\}$ will be identified with the subgraph

\begin{center}
\begin{tikzpicture}[domain=0:4, scale=1]

  \node (1) at (4,1.5) {$\bullet$};
  \node (1) at (3,2.5) {$\bullet$};
  \node (1) at (5,2.5) {$\bullet$};
  \node (1) at (4,2.1) {$2i$};
  \draw[->, >=latex] (4,1.5) -- (4.9,2.4); 
  \draw[->, >=latex] (3,2.5) -- (3.9,1.6);
  \draw[->, >=latex] (3,2.5) -- (4.9,2.5); 
\end{tikzpicture}
\end{center}

Then the acyclic-directed graph $\mathcal{T}_\mathcal{G}$ is formed by the following rules:
    
\begin{enumerate}
    \item If the tiles $G_j$ and $G_{j+1}$ of the snake graph $\mathcal{G}$ are joined by their east and west sides, then the respective triangles labeled by $j$ and $j+1$ are joined by the arrows corresponding to the steps $(1,1)$ and $(1,-1)$ as follows:
    

\begin{center}
\begin{tikzpicture}[scale=1.2]

\draw[gray] (0,1) -- (0,1);
\draw[gray] (0,0) -- (0,1);
\draw[gray] (1,1) -- (0,1);
\draw[gray] (0,0) -- (1,0);

\filldraw(0.5,0.5)     node[anchor=center] {$G_{2i-1}$};

\filldraw(4.5,0.5)     node[anchor=center] {$\Longrightarrow$};

\draw[gray] (1,0) -- (1,1);
\draw[gray] (2,0) -- (2,1);
\draw[gray] (2,1) -- (1,1);
\draw[gray] (1,0) -- (2,0);

\filldraw(1.5,0.5)     node[anchor=center] {$G_{2i}$};

  \node (1) at (7.5,1) {$\bullet$};
  \node (1) at (8.5,0) {$\bullet$};
  \node (1) at (9.5,1) {$\bullet$};
  \node (1) at (8.5,0.7) {$2i$};
  \draw[->, >=latex] (8.6,0.1) -- (9.4,0.9); 
  \draw[->, >=latex] (7.6,0.9) -- (8.4,0.1);
  \draw[->, >=latex] (7.6,1) -- (9.4,1);
  
  \node (1) at (6.5,0) {$\bullet$};
  \node (1) at (7.5,1) {$\bullet$};
  \node (1) at (8.5,0) {$\bullet$};
  \node (1) at (7.5,0.35) {\small $2i-1$};
  \draw[->, >=latex] (6.6,0) -- (8.4,0); 
  \draw[->, >=latex] (6.6,0.1) -- (7.4,0.9); 

\filldraw[fill=white!40,draw=black!80] (0,0) circle (2pt);
\filldraw[fill=white!40,draw=black!80] (1,0) circle (2pt);
\filldraw[fill=white!40,draw=black!80] (2,0) circle (2pt);
\filldraw[fill=white!40,draw=black!80] (0,1) circle (2pt);
\filldraw[fill=white!40,draw=black!80] (1,1) circle (2pt);
\filldraw[fill=white!40,draw=black!80] (2,1) circle (2pt);
\end{tikzpicture}
\end{center} 
    

\begin{center}
\begin{tikzpicture}[scale=1.2]

\draw[gray] (0,1) -- (0,1);
\draw[gray] (0,0) -- (0,1);
\draw[gray] (1,1) -- (0,1);
\draw[gray] (0,0) -- (1,0);

\filldraw(0.5,0.5)     node[anchor=center] {$G_{2i}$};

\filldraw(4.5,0.5)     node[anchor=center] {$\Longrightarrow$};

\draw[gray] (1,0) -- (1,1);
\draw[gray] (2,0) -- (2,1);
\draw[gray] (2,1) -- (1,1);
\draw[gray] (1,0) -- (2,0);

\filldraw(1.5,0.5)     node[anchor=center] {$G_{2i+1}$};

   \node (1) at (6.5,1) {$\bullet$};
  \node (1) at (7.5,0) {$\bullet$};
  \node (1) at (8.5,1) {$\bullet$};
  \node (1) at (7.5,0.7) {$2i$};
  \draw[->, >=latex] (7.6,0.1) -- (8.4,0.9); 
  \draw[->, >=latex] (6.6,0.9) -- (7.4,0.1);
  \draw[->, >=latex] (6.6,1) -- (8.4,1);
  
  \node (1) at (7.5,0) {$\bullet$};
  \node (1) at (8.5,1) {$\bullet$};
  \node (1) at (9.5,0) {$\bullet$};
  \node (1) at (8.5,0.4) {\small $2i+1$};
  \draw[->, >=latex] (7.6,0) -- (9.4,0); 
  \draw[->, >=latex] (8.6,0.9) -- (9.4,0.1);

\filldraw[fill=white!40,draw=black!80] (0,0) circle (2pt);
\filldraw[fill=white!40,draw=black!80] (1,0) circle (2pt);
\filldraw[fill=white!40,draw=black!80] (2,0) circle (2pt);
\filldraw[fill=white!40,draw=black!80] (0,1) circle (2pt);
\filldraw[fill=white!40,draw=black!80] (1,1) circle (2pt);
\filldraw[fill=white!40,draw=black!80] (2,1) circle (2pt);  
\end{tikzpicture}
\end{center} 
 
    \item If the tiles $G_j$ and $G_{j+1}$ of the snake graph $\mathcal{G}$ are joined by their north and south sides, then the respective triangles labeled by $j$ and $j+1$ are joined by their north and south arrows (corresponding to steps $(1,0)$) or by their north and south vertices, as follows:


\begin{center}
\begin{tikzpicture}[scale=1.2]

\draw[gray] (0,0) -- (0,1);
\draw[gray] (1,0) -- (1,1);
\draw[gray] (1,1) -- (0,1);
\draw[gray] (0,0) -- (1,0);

\filldraw(0.5,0.5)     node[anchor=center] {$G_{2i}$};

\draw[gray] (0,1) -- (0,2);
\draw[gray] (1,1) -- (1,2);
\draw[gray] (1,2) -- (0,2);
\draw[gray] (0,1) -- (1,1);

\filldraw(0.5,1.5)     node[anchor=center] {$G_{2i+1}$};

\filldraw(4,1)     node[anchor=center] {$\Longrightarrow$};

  \node (1) at (6.5,1) {$\bullet$};
  \node (1) at (7.5,2) {$\bullet$};
  \node (1) at (8.5,1) {$\bullet$};
  \node (1) at (7.5,1.4) {\small $2i+1$};
  \draw[->, >=latex] (6.6,1) -- (8.4,1); 
  \draw[->, >=latex] (6.6,1.1) -- (7.4,1.9); 
  \draw[->, >=latex] (7.6,1.9) -- (8.4,1.1);
  
  \node (1) at (6.5,1) {$\bullet$};
  \node (1) at (7.5,0) {$\bullet$};
  \node (1) at (8.5,1) {$\bullet$};
  \node (1) at (7.5,0.7) {$2i$};
  \draw[->, >=latex] (7.6,0.1) -- (8.4,0.9); 
  \draw[->, >=latex] (6.6,0.9) -- (7.4,0.1);

\filldraw[fill=white!40,draw=black!80] (0,0) circle (2pt);
\filldraw[fill=white!40,draw=black!80] (1,0) circle (2pt);
\filldraw[fill=white!40,draw=black!80] (0,1) circle (2pt);
\filldraw[fill=white!40,draw=black!80] (1,1) circle (2pt);
\filldraw[fill=white!40,draw=black!80] (0,2) circle (2pt);
\filldraw[fill=white!40,draw=black!80] (1,2) circle (2pt);
\end{tikzpicture}
\end{center}


\begin{center}
\begin{tikzpicture}[scale=1.2]

\draw[gray] (0,0) -- (0,1);
\draw[gray] (1,0) -- (1,1);
\draw[gray] (1,1) -- (0,1);
\draw[gray] (0,0) -- (1,0);

\filldraw(0.5,0.5)     node[anchor=center] {$G_{2i-1}$};

\draw[gray] (0,1) -- (0,2);
\draw[gray] (1,1) -- (1,2);
\draw[gray] (1,2) -- (0,2);
\draw[gray] (0,1) -- (1,1);

\filldraw(0.5,1.5)     node[anchor=center] {$G_{2i}$};

\filldraw(4,1)     node[anchor=center] {$\Longrightarrow$};

  \node (1) at (6.5,0) {$\bullet$};
  \node (1) at (7.5,1) {$\bullet$};
  \node (1) at (8.5,0) {$\bullet$};
  \node (1) at (7.5,0.4) {\small $2i-1$};
  \draw[->, >=latex] (6.6,0) -- (8.4,0); 
  \draw[->, >=latex] (6.6,0.1) -- (7.4,0.9); 
  \draw[->, >=latex] (7.6,0.9) -- (8.4,0.1);
  
  \node (1) at (6.5,2) {$\bullet$};
  \node (1) at (7.5,1) {$\bullet$};
  \node (1) at (8.5,2) {$\bullet$};
  \node (1) at (7.5,1.7) {$2i$};
  \draw[->, >=latex] (7.6,1.1) -- (8.4,1.9); 
  \draw[->, >=latex] (6.6,1.9) -- (7.4,1.1);
  \draw[->, >=latex] (6.6,2) -- (8.4,2); 
  
\filldraw[fill=white!40,draw=black!80] (0,0) circle (2pt);
\filldraw[fill=white!40,draw=black!80] (1,0) circle (2pt);
\filldraw[fill=white!40,draw=black!80] (0,1) circle (2pt);
\filldraw[fill=white!40,draw=black!80] (1,1) circle (2pt);
\filldraw[fill=white!40,draw=black!80] (0,2) circle (2pt);
\filldraw[fill=white!40,draw=black!80] (1,2) circle (2pt);
\end{tikzpicture}
\end{center}
\end{enumerate}

We can see that the construction of $\mathcal{T}_\mathcal{G}$ is very similar to that of $\mathcal{G}$, except for the shape of the tiles (the square tiles are changed to triangular tiles). Therefore, making use of the operation in graph theory called edge contraction, and thinking about maintaining the information alluding to the cluster variables seen in \S~\ref{subsection:Perfect matchings and snake graphs}, we are going to make a different construction than the one introduced in \S~\ref{sec:Lattice_paths_and_perfect_matchings}.

\begin{definition}\label{def:Edge_contraction}
Let $\mathcal{G}$ a snake graph and let $G_i=(V_i,E_i)$ a tile of $\mathcal{G}$, where $V_i=\{v_p, v_q, v_r, v_s\}$ and $E_i=\{e_{pq}, e_{qr}, e_{rs}, e_{sp}\}$: 
\begin{center}
\begin{tikzpicture}[y=.3cm, x=.3cm,font=\normalsize, scale=1]
\draw[gray] (0,0) -- (0,5);
\draw[gray] (5,0) -- (5,5);
\draw[gray] (5,5) -- (0,5);
\draw[gray] (0,0) -- (5,0);

\filldraw[fill=white!40,draw=black!80] (0,0) circle (3pt);
\filldraw[fill=white!40,draw=black!80] (0,5) circle (3pt);
\filldraw[fill=white!40,draw=black!80] (5,0) circle (3pt);
\filldraw[fill=white!40,draw=black!80] (5,5) circle (3pt);

\filldraw(0,-0.8)     node[anchor=east] {$v_{p}$};
\filldraw(7,-0.8)     node[anchor=east] {$v_{q}$};
\filldraw(7,5.8)     node[anchor=east] {$v_{r}$};
\filldraw(0,5.8)     node[anchor=east] {$v_{s}$};

\filldraw(2.5,2.5)     node[anchor=center] {$G_i$};

\filldraw(2.5,5.8)     node {$e_{rs}$};
\filldraw(2.5,-0.8)     node {$e_{pq}$};
\filldraw(6.5,2.5)     node {$e_{qr}$};
\filldraw(-1.5,2.5)     node {$e_{sp}$};

\end{tikzpicture}
\end{center}

Let $f_{rs}$ be a function that maps every vertex in $V_i-\{v_r,v_s\}$ to itself, and otherwise maps $v_r$ and $v_s$ to a new vertex $\textbf{e}_{rs}$. We define the \emph{north contraction} of $G_i$ as a new graph $f_{rs}(G_i)=(\hat{V}_i,\hat{E}_i)$, where 
\[
\hat{V}_i=(V_i-\{v_r,v_s\})\cup \{\textbf{e}_{rs}\} \quad \quad \text{and} \quad \quad \hat{E}_i=E_i-\{e_{rs}\},
\]
and for every $v\in V_i$, we have that $f_{rs}(v)\in \hat{V}_i$ is incident to an edge $e\in \hat{E}_i\subset E_i$ if and only if $e$ is incident to $v$ in $G_i$.

\begin{center}
\begin{tikzpicture}[y=.3cm, x=.3cm,font=\normalsize, scale=1]
\draw[gray] (0,0) -- (0,5);
\draw[gray] (5,0) -- (5,5);
\draw[gray] (5,5) -- (0,5);
\draw[gray] (0,0) -- (5,0);

\filldraw[fill=white!40,draw=black!80] (0,0) circle (3pt);
\filldraw[fill=white!40,draw=black!80] (0,5) circle (3pt);
\filldraw[fill=white!40,draw=black!80] (5,0) circle (3pt);
\filldraw[fill=white!40,draw=black!80] (5,5) circle (3pt);

\filldraw(0,-0.8)     node[anchor=east] {$v_{p}$};
\filldraw(7,-0.8)     node[anchor=east] {$v_{q}$};
\filldraw(7,5.8)     node[anchor=east] {$v_{r}$};
\filldraw(0,5.8)     node[anchor=east] {$v_{s}$};

\filldraw(2.5,2.5)     node[anchor=center] {$G_i$};

\filldraw(2.5,5.8)     node {$e_{rs}$};
\filldraw(2.5,-0.8)     node {$e_{pq}$};
\filldraw(6.5,2.5)     node {$e_{qr}$};
\filldraw(-1.5,2.5)     node {$e_{sp}$};

\filldraw(14.5,2.5)     node {$\underrightarrow{\text{\; north contraction \; }}$};
\end{tikzpicture} \begin{tikzpicture}[y=.3cm, x=.3cm,font=\normalsize, scale=1.2]
\draw[gray] (0,0) -- (2.5,5);
\draw[gray] (5,0) -- (2.5,5);
\draw[gray] (0,0) -- (5,0);

\filldraw[fill=white!40,draw=black!80] (0,0) circle (2.5pt);
\filldraw[fill=white!40,draw=black!80] (5,0) circle (2.5pt);
\filldraw[fill=black!100,draw=black!80] (2.5,5) circle (2.5pt);

\filldraw(0,-0.8)     node[anchor=east] {$v_{p}$};
\filldraw(7,-0.8)     node[anchor=east] {$v_{q}$};

\filldraw(2.5,1.5)     node[anchor=center] {\small $f_{rs}(G_i)$};

\filldraw(2.5,6)     node {$\textbf{e}_{rs}$};
\filldraw(2.5,-0.8)     node {$e_{pq}$};
\filldraw(5.5,2.5)     node {$e_{qr}$};
\filldraw(-0.5,2.5)     node {$e_{sp}$};

\end{tikzpicture}
\end{center}

Analogously, let $f_{pq}$ be a function that maps every vertex in $V_i-\{v_p,v_q\}$ to itself, and otherwise, maps $v_p$ and $v_q$ to a new vertex $\textbf{e}_{pq}$. We define the \emph{south contraction} of $G_i$ as a new graph $f_{pq}(G_i)=(\check{V}_i,\check{E}_i)$, where 
\[
\check{V}_i=(V_i-\{v_p,v_q\})\cup \{\textbf{e}_{pq}\} \quad \quad \text{and} \quad \quad \check{E_i}=E_i-\{e_{pq}\},
\]
and for every $v\in V_i$, we have that $f_{pq}(v)\in \check{V}_i$ is incident to an edge $e\in \check{E}_i\subset E_i$ if and only if $e$ is incident to $v$ in $G_i$.

\begin{center}
\begin{tikzpicture}[y=.3cm, x=.3cm,font=\normalsize, scale=1]
\draw[gray] (0,0) -- (0,5);
\draw[gray] (5,0) -- (5,5);
\draw[gray] (5,5) -- (0,5);
\draw[gray] (0,0) -- (5,0);

\filldraw[fill=white!40,draw=black!80] (0,0) circle (3pt);
\filldraw[fill=white!40,draw=black!80] (0,5) circle (3pt);
\filldraw[fill=white!40,draw=black!80] (5,0) circle (3pt);
\filldraw[fill=white!40,draw=black!80] (5,5) circle (3pt);

\filldraw(0,-0.8)     node[anchor=east] {$v_{p}$};
\filldraw(7,-0.8)     node[anchor=east] {$v_{q}$};
\filldraw(7,5.8)     node[anchor=east] {$v_{r}$};
\filldraw(0,5.8)     node[anchor=east] {$v_{s}$};

\filldraw(2.5,2.5)     node[anchor=center] {$G_i$};

\filldraw(2.5,5.8)     node {$e_{rs}$};
\filldraw(2.5,-0.8)     node {$e_{pq}$};
\filldraw(6.5,2.5)     node {$e_{qr}$};
\filldraw(-1.5,2.5)     node {$e_{sp}$};

\filldraw(14.5,2.5)     node {$\underrightarrow{\text{\; south contraction \; 
 }}$};
\end{tikzpicture} \begin{tikzpicture}[y=.3cm, x=.3cm,font=\normalsize, scale=1.2]
\draw[gray] (0,5) -- (2.5,0);
\draw[gray] (5,5) -- (2.5,0);
\draw[gray] (0,5) -- (5,5);

\filldraw[fill=white!40,draw=black!80] (0,5) circle (2.5pt);
\filldraw[fill=white!40,draw=black!80] (5,5) circle (2.5pt);
\filldraw[fill=black!100,draw=black!80] (2.5,0) circle (2.5pt);

\filldraw(7,5.8)     node[anchor=east] {$v_{r}$};
\filldraw(0,5.8)     node[anchor=east] {$v_{s}$};

\filldraw(2.5,3.5)     node[anchor=center] {\small $f_{pq}(G_i)$};

\filldraw(2.5,6)     node {$e_{rs}$};
\filldraw(2.5,-0.9)     node {$\textbf{e}_{pq}$};
\filldraw(5.5,2.5)     node {$e_{qr}$};
\filldraw(-0.5,2.5)     node {$e_{sp}$};

\end{tikzpicture}
\end{center}
\end{definition}

If no confusion arises, any north contraction (resp. south contraction) will be called $f_N$ (resp. $f_S$).

\begin{definition}\label{def:contracted_snake_graph}
Let $\mathcal{G}$ be a snake graph with $d$ tiles and interior edges $e_1, e_2,\dots,e_{d-1}$. The \emph{contracted snake graph} associated to $\mathcal{G}$ is a connected planar graph consisting of a finite sequence of tiles $T_1, T_2, \dots, T_d$ in such a way that 
\begin{enumerate}
    \item $T_i=f_N(G_i)$ if $i$ is odd, and otherwise, $T_i=f_S(G_i)$ if $i$ is even.
    \item For each $i\in [d-1]$, $T_i$ and $T_{i+1}$ share exactly the edge $e_i$ or the vertex $\mathbf{e}_i$. 
\end{enumerate}

The \emph{oriented contracted snake graph} is obtained when we put orientations to the edges of the contracted snake graph with the condition that the arrows are oriented from left to right.
\end{definition}

\begin{remark}
In order to generate equivalent lattice paths in the oriented contracted snake graph, as discussed in \S~\ref{sec:Lattice_paths_and_perfect_matchings} for $\mathcal{T}_\mathcal{G}$ and characterized by steps from the set 
\[\{(2,0),(1,1),(1,-1)\},\] 
we can consider an isomorphic snake graph to $\mathcal{G}$ in Definition~\ref{def:contracted_snake_graph} with tiles $G_i$ as $1\times 2$ rectangles.  
Consequently, the oriented contracted snake graph of $\mathcal{G}$ is canonically isomorphic to the triangular snake graph $\mathcal{T}_\mathcal{G}$. Therefore, we systematically identify these graphs. Hence we can think of $\mathcal{T}_\mathcal{G}$ either as a triangular snake graph or as an oriented contracted snake graph. The $k$-vertices $s=(s_1,\dots,s_k)$ and $t=(t_1,\dots,t_k)$ (as defined in Definition~\ref{def:k-vertices}) are also considered within this context.

\end{remark}
\begin{lemma}\label{lemma:characterization_contraction_edges}
    Let $\mathcal{T}_\mathcal{G}$ be the oriented contracted snake graph of $\mathcal{G}$. The sinks and sources are the only vertices that are not the result of contractions of edges of $\mathcal{G}$. 
\end{lemma}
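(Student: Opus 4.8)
The plan is to prove the two inclusions separately, using the convention from Definition~\ref{def:contracted_snake_graph} that odd tiles are north-contracted ($f_N$) and even tiles are south-contracted ($f_S$), together with the fact that in $\mathcal{T}_\mathcal{G}$ every arrow is oriented from left to right. With this orientation a vertex is a source exactly when all incident arrows are outgoing, and a sink exactly when all are incoming, so the statement amounts to classifying, for each vertex, whether its incident arrows are all of one kind or a mix of both.

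First I would show that no contraction vertex is a source or a sink. A contraction vertex is the image $\mathbf{e}$ of the edge of some tile $G_i$ that was collapsed, namely the north edge when $i$ is odd and the south edge when $i$ is even; in either case $\mathbf{e}$ is the apex of the triangle $T_i$. Inspecting the two triangle shapes (apex-up for $f_N$, apex-down for $f_S$) shows that the apex always carries exactly one incoming and one outgoing arrow: for an apex-up triangle the arrow from the left base vertex enters the apex and the arrow to the right base vertex leaves it, and dually for an apex-down triangle. Since gluing tiles only identifies vertices or adds further arrows, and never deletes any, $\mathbf{e}$ still has at least one incoming and one outgoing arrow in $\mathcal{T}_\mathcal{G}$, so it is neither a source nor a sink. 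This already proves that sources and sinks are non-contraction vertices.

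For the converse I would argue that every non-contraction vertex is a pure source or a pure sink. A non-contraction vertex $v$ survives the contraction in each tile containing it, hence is one of the two base vertices of each such triangle, and within a single triangle a base vertex is either the left vertex (all arrows outgoing) or the right vertex (all arrows incoming). The heart of the argument is to control how such a $v$ is shared among several tiles, and here I would examine the gluing rules of Definition~\ref{def:contracted_snake_graph} case by case. In an east--west gluing the two shared vertices arise by contracting an edge, the north edge of the odd tile and the south edge of the even tile respectively, so both are contraction vertices; consequently a non-contraction vertex is never shared across an east--west gluing, and in particular it cannot belong to three tiles at a turn (a turn always involves an east--west junction). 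It is likewise not shared across an odd-below-even north--south gluing, whose only shared vertex is the common apex $\mathbf{e}_i$. The sole remaining possibility is an even-below-odd north--south gluing, and there the two shared base vertices are either both left base vertices (yielding a source) or both right base vertices (yielding a sink). Hence the incident arrows of $v$ are all outgoing or all incoming, and $v$ is a source or a sink.

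The main obstacle, and the step deserving the most care, is precisely this gluing analysis in the converse: one must verify that the parity convention ($\text{odd}\to f_N$, $\text{even}\to f_S$) forces both shared vertices of every east--west junction to be contraction vertices, which is exactly what prevents a surviving vertex from acquiring both an incoming and an outgoing arrow at a turn of the snake. This bookkeeping parallels the inductive argument of Lemma~\ref{le:equality_of_sources_and_sinks}, and the same induction on the number of tiles could serve as an alternative proof, adding one triangle at a time and checking that each newly created apex is internal while the newly exposed base vertices remain extremal.
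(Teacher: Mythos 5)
Your proposal is correct and takes essentially the same approach as the paper: the paper's proof likewise notes that a contraction vertex receives the incoming west arrow and emits the outgoing east arrow of its tile (so it is neither a source nor a sink), and then asserts that every remaining vertex of $\mathcal{T}_\mathcal{G}$ is a source or a sink because the non-extremal vertices are exactly the apexes of the triangular tiles. Your case-by-case analysis of the east--west and north--south gluings merely supplies the verification that the paper leaves to inspection of the construction.
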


\begin{proof}

    This directly follows from Definition~\ref{def:Edge_contraction}. If $\mathbf{e}_{pq}$ represents the contraction of the edge $e_{pq}$ within a tile $G_i$ of $\mathcal{G}$, then the west and east edges of $G_i$ are incidents to $\mathbf{e}_{pq}$, with the west arrow directed into $\mathbf{e}_{pq}$ and the east arrow directed out of $\mathbf{e}_{pq}$. Consequently, the vertices obtained through edge contractions cannot serve as sources or sinks. Moreover, any other vertex is obtained by an edge contraction as it is either the top or the bottom of a triangular tile.
\end{proof}

\begin{remark}\label{re:decontraction}
A \emph{decontraction} process can be applied to $\mathcal{T}_\mathcal{G}$ to reconstruct the snake graph $\mathcal{G}$ from $\mathcal{T}_\mathcal{G}$. According to Lemma~\ref{lemma:characterization_contraction_edges}, the sources, sinks, and their incident arrows remain unchanged. However, for each vertex $\mathbf{e}$ in $\mathcal{T}_\mathcal{G}$ that is neither a source nor a sink, two new vertices, $v_e$ and $v_e^\prime$, along with the edge $e$, are introduced. The arrows that end at $\mathbf{e}$ become incident to the vertex $v_e$, while those that start at $\mathbf{e}$ will be incident to the vertex $v_e^{\prime}$. Formally, this process is defined as follows: Let $\mathcal{T}_\mathcal{G}=(\overline{V},\overline{E})$ be the oriented contracted snake graph associated to $\mathcal{G}$. Denote by $S$ the set of sources and sinks in $\mathcal{T}_\mathcal{G}$. Let $h$ be a function that maps every vertex $\mathbf{e}$ in $V-S$ to a new arrow 
$\xymatrix{v_{\textbf{e}} \ar[r]^{e} & v_{\textbf{e}}^\prime}$. Define the sets:
\[
V^\prime=\bigcup_{\mathbf{e}\in \overline{V}-S} \{v_{\mathbf{e}},v_{\mathbf{e}}^\prime\} \quad \text{and} \quad E^\prime=\bigcup_{\mathbf{e}\in \overline{V}-S}\{h(\mathbf{e})\}=\bigcup_{\mathbf{e}\in \overline{V}-S}\{e\}.
\]

Then, $\overrightarrow{\mathcal{G}}=(V,E)$ is such that $V=(\overline{V}-S)\cup V^\prime$ and $E=\overline{E}\cup E^\prime$. For every arrow $\alpha \in \overline{E}$ such that $s(\alpha)=\mathbf{e}$, the corresponding arrow $\alpha \in E$ satisfies $s(\alpha)=v_{\mathbf{e}}$; otherwise, if $\alpha\in \overline{E}$ is such that $t(\alpha)=\mathbf{e}$, then $\alpha \in E$ satisfies $t(\alpha)=v_{\mathbf{e}}^\prime$. The underlying graph of $\overrightarrow{\mathcal{G}}$ (i.e., without considering the direction of the arrows) is the snake graph $\mathcal{G}$.
\end{remark}

\begin{theorem}\label{the:Perfect_matchings_and_Routes}
Let $\mathcal{G}$ be a snake graph. The set of perfect matchings $\text{Match}(\mathcal{G})$ of $\mathcal{G}$ is in bijection with the set of $k$-routes from $s$ to $t$ in $\mathcal{T}_\mathcal{G}$. 
\end{theorem}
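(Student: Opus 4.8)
The plan is to prove that the map $\Phi$ from $\text{Match}(\mathcal{G})$ to the set of $k$-routes from $s$ to $t$ in $\mathcal{T}_\mathcal{G}$, furnished by Lemma~\ref{lemma:Perfect_matchings_to_routes}, is a bijection. That lemma already settles one direction: it sends a perfect matching $P$ to the arrow system $R_P$ read off from the decorated snake graph tiling of $P$, and it checks that $R_P$ is a genuine $k$-route (vertex-disjoint paths, the $i$-th running from $s_i$ to $t_i$). Hence only injectivity and surjectivity of $\Phi$ remain.

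Injectivity I would take straight from the Remark following Lemma~\ref{lemma:Perfect_matchings_to_routes}: if $P\neq P'$ and $e$ is an edge in $P$ but not in $P'$, that Remark produces a decorated domino of $P$ — and correspondingly an arrow, or in the empty-domino case a pair of arrows of $P'$ — present in exactly one of $R_P$ and $R_{P'}$, so the two routes differ.

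Surjectivity is where the real work lies, and I expect the covering argument to be the main obstacle. Given a $k$-route $R=(p_1,\dots,p_k)$ I would invert $\Phi$ explicitly: to each arrow used by $R$ assign its $(2,0)$, $(1,1)$ or $(1,-1)$ decorated domino (Figure~\ref{fig:decorated_dominoes}), and to each internal vertex lying on no path assign the empty domino seated there. The claim is that this family is a snake graph tiling of $\mathcal{G}$. I would verify this by a square-by-square count organized through the decontraction of Remark~\ref{re:decontraction}. Every unit square of $T(\mathcal{G})$ carries exactly one black point on its decorated edge, so the squares are partitioned among the black points; by Lemma~\ref{lemma:characterization_contraction_edges} a source or sink owns a single square, while an internal vertex $\mathbf{e}$ (a contracted edge of $\mathcal{G}$) owns the two squares meeting at it. At a source $s_i$ or sink $t_i$, the unique incident arrow of $R$ — present because $R$ uses each $s_i$ and each $t_i$ — covers its one square. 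At an internal vertex on a path, the vertex is neither source nor sink, hence carries exactly one incoming and one outgoing arrow of $R$; by Remark~\ref{re:decontraction} these attach to $v_{\mathbf{e}}$ and to $v_{\mathbf{e}}^\prime$, i.e.\ to the two distinct squares of $\mathbf{e}$, covering both once. At an internal vertex off every path, no arrow is incident and its empty domino covers both squares. Thus every square is covered exactly once, the family lies in $\text{Til}(\mathcal{G})$, and Proposition~\ref{prop:snake_graph_tilings_and_perfect_matchings} converts it into a perfect matching $P$ with $\Phi(P)=R$ by construction.

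The delicate part is precisely this partition check: one must confirm the three local prescriptions are exhaustive and mutually disjoint, leaving no square uncovered or doubly covered. This rests on sources and sinks being the only non-contracted vertices (Lemma~\ref{lemma:characterization_contraction_edges}), on their being equal in number (Lemma~\ref{le:equality_of_sources_and_sinks}), and on a $k$-route meeting each of them exactly once. Granting this, $\Phi$ is onto, and with injectivity it is the claimed bijection.
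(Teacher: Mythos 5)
Your proposal is correct, and its core construction coincides with the paper's: given a $k$-route $R$, both assign to each arrow of $R$ its decorated domino (equivalently, after decontraction, an edge of $\mathcal{G}$) and to each internal vertex lying on no path of $R$ the empty domino seated there (equivalently, the contracted edge itself). The difference is where the verification happens. The paper never returns to tilings: it regards $\mathcal{T}_\mathcal{G}$ as the oriented contracted snake graph, applies the decontraction of Remark~\ref{re:decontraction}, and checks directly that the resulting edge set $P_R$ is a perfect matching of $\mathcal{G}$ --- every vertex is covered and no two chosen edges meet, both consequences of vertex-disjointness and of Lemma~\ref{lemma:characterization_contraction_edges} --- then finishes by showing distinct routes give distinct matchings (leaving the fact that the two maps are mutually inverse implicit). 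You instead verify the tiling condition on $T(\mathcal{G})$ by the partition of unit squares among black points, and then invoke Proposition~\ref{prop:snake_graph_tilings_and_perfect_matchings} to convert the tiling into a matching $P$ with $\Phi(P)=R$ by construction; injectivity you correctly outsource to the remark following Lemma~\ref{lemma:Perfect_matchings_to_routes}. Since the unit squares of $T(\mathcal{G})$ are precisely the squares centered at the vertices of $\mathcal{G}$, your square count is the paper's vertex count in geometric disguise; what your route buys is reuse of Proposition~\ref{prop:snake_graph_tilings_and_perfect_matchings} (no need to re-derive the matching property) and a cleaner logical frame (one map, proved injective and surjective), while the paper's route avoids the geometric bookkeeping and stays entirely in graph language. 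Two small slips worth fixing: the fact that the incoming and outgoing arrows at an internal vertex cover its two distinct squares follows from the structure of the decorated dominoes (Figure~\ref{fig:decorated_dominoes} and Remark~\ref{re:Basic_properties_T_G}), not from Remark~\ref{re:decontraction}, which is a graph-level operation; and Lemma~\ref{le:equality_of_sources_and_sinks} is needed only so that $s$ and $t$ are $k$-tuples of equal length, not for the partition check itself.
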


\begin{proof}
According to Lemma~\ref{lemma:Perfect_matchings_to_routes}, every perfect matching $P\in \text{Match}(\mathcal{G})$ gives rise to a $k$-route from $s$ to $t$ in $\mathcal{T}_\mathcal{G}$. It is sufficient to prove that for every $k$-route $R$, we can obtain a perfect matching $P_R$ and that different $k$-routes give rise to different perfect matchings. We think of $\mathcal{T}_\mathcal{G}$ as the oriented contracted snake graph associated to $\mathcal{G}$ and consider the decontraction process and the notation described in Remark~\ref{re:decontraction}. 
Consider a $k$-route $R$ from $s$ to $t$ in $\mathcal{T}_\mathcal{G}$. We consider the following subsets of arrows and vertices in $\mathcal{T}_\mathcal{G}$:
\begin{enumerate}
    \item Let $T\subseteq \overline{E}$ be the set of all arrows $\alpha$ in $\mathcal{T}_\mathcal{G}$ such that $\alpha$ belongs to a path $p$ of $R$.
    \item Let $U\subseteq \overline{V}-S$ be the set of all non-source or non-sink vertices $\mathbf{e}$ in $\mathcal{T}_\mathcal{G}$ such that $\mathbf{e}$ does not belong to a path of $R$.
\end{enumerate}

We claim that the edges associated to $T$ and $U$ obtained after the decontraction process form a perfect matching $P_R$ of $\mathcal{G}$. First, we will show that all vertices in $\mathcal{G}$ belong to an edge of the decontraction process of $T\cup U$. This is evident because the vertices of $\mathcal{T}_\mathcal{G}$ are sources, sinks, or contractions of edges. In the first two cases, for every source $s_i$ and sink $t_i$ there exists a path in the $k$-route $R$ starting at $s_i$ and ending at $t_i$. Therefore, all sources and sinks belong to the edges obtained by the decontraction process of $T$. In the third case, every vertex $\mathbf{e}$ obtained by contraction of an edge belongs to the edges obtained by the decontraction process of $T$, provided $\mathbf{e}$ belongs to a path of $R$; or $\mathbf{e}$ belongs to the edges obtained by the decontraction process of $U$, in case $\mathbf{e}$ does not belong to a path of $R$.  

Now, we aim to show that for any two distinct edges $e$ and $e^\prime$ in the decontraction process of $T\cup U$, $e$ and $e^\prime$ do not share common vertices. For every path 
\[ 
p_i=
\xymatrix { s_i
\ar@[red][r]^{\alpha_1}& \mathbf{e_1} \ar@[red][r]^{\alpha_2}& \mathbf{e_2} \ar@[red][r]^{\alpha_3}& \cdots \ar@[red][r]^{\alpha_{l-1}}& \mathbf{e_{l-1}} \ar@[red][r]^{\alpha_l}& t_i
}
\]
in $R$, we obtain the following edges when we apply the decontraction process. 
\[
\xymatrix { s_i
\ar@{-}@[red][r]^{\alpha_1}& v_{\mathbf{e_1}} \ar@{-}[r]^{e_1} & v_{\mathbf{e_1}}^\prime \ar@{-}@[red][r]^{\alpha_2}& v_{\mathbf{e_2}} \ar@{-}[r]^{e_2} & v_{\mathbf{e_2}}^\prime \ar@{-}@[red][r]^{\alpha_3}& \cdots \ar@{-}@[red][r]^{\alpha_{l-1}}& v_{\mathbf{e_{l-1}}} \ar@{-}[r]^{e_{l-1}} & v_{\mathbf{e_{l-1}}}^\prime \ar@{-}@[red][r]^{\alpha_l}& t_i
}
\]
where the red edges $\alpha_1,\alpha_2,\dots,\alpha_l$ are edges in $P_R$, while the gray arrows $e_1,e_2,\dots, e_{l-1}$ are not. There is no other edge in $P_R$ that shares a vertex with the edges $\alpha_1,\dots,\alpha_l$. Otherwise, there would exist a path $p_j$ in $R$ that shares a vertex with $p_i$, it would contradict the fact that $R$ is a $k$-route. 

On the other hand, for every edge contraction vertex $\mathbf{e}$ that does not belong to a path of $R$

\begin{center}
\begin{tikzpicture}[domain=0:4, scale=0.9]

  \node (1) at (5,8) {$\mathbf{e}$};
  \draw[->, >=latex] (4.1,7.1) -- (4.8,7.8); 
  \draw[->, >=latex] (5.2,7.8) -- (5.9,7.1);
  
  \draw[->, >=latex] (5.2,8.2) -- (5.9,8.9); 
  \draw[->, >=latex] (4.1,8.9) -- (4.8,8.2);

  \node (1) at (5.7,8.2) {$\vdots$};
  \node (1) at (4.3,8.2) {$\vdots$};
  
\end{tikzpicture}
\end{center}
we obtain the following edges when we apply the decontraction process
\begin{center}
\begin{tikzpicture}[domain=0:4, scale=0.9]

  \node (1) at (5,8) {$v_\mathbf{e}$};
  \node (1) at (6.5,8.05) {$v_\mathbf{e}^\prime$};
  \draw (4.1,7.1) -- (4.8,7.8); 
  \draw (6.7,7.8) -- (7.4,7.1);
  
  \draw (6.7,8.2) -- (7.4,8.9); 
  \draw (4.1,8.9) -- (4.8,8.2);

  \draw[red] (5.3,8) -- (6.2,8);
  \node (1) at (5.75,8.15) {$e$};

  \node (1) at (7.2,8.2) {$\vdots$};
  \node (1) at (4.3,8.2) {$\vdots$}; 
\end{tikzpicture}
\end{center}
where the red edge $e$ is an edge in $P_R$, and all incident edges to $e$ do not belong to $P_R$. Therefore, $P_R$ is a perfect matching of $\mathcal{G}$. 

Finally, let us consider two distinct $k$-routes, denoted as $R$ and $R^\prime$. We observe that their corresponding perfect matchings, namely $P_R$ and $P_{R^\prime}$, are distinct. This follows directly from the existence of an arrow $\alpha$ in a path of $R$ that is absent in any path of $R^\prime$. According to the definitions of $P_R$ and $P_{R^\prime}$, the edge corresponding to $\alpha$ is included in $P_R$ but not in $P_{R^\prime}$. Consequently, $P_R$ and $P_{R^\prime}$ differ.
\end{proof}

\begin{corollary}
The number of perfect matchings $m(\mathcal{G})$ of the snake graph $\mathcal{G}$ is equal to the determinant of the path matrix of $\mathcal{T}_\mathcal{G}$ between the $k$-vertices $s = (s_1,\dots, s_k )$ and $t = (t_1,\dots, t_k)$.
\end{corollary}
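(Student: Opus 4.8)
The plan is to chain together the two principal results already in hand: the bijection of Theorem~\ref{the:Perfect_matchings_and_Routes} and the Lindström--Gessel--Viennot lemma (Lemma~\ref{lem:LGV}). First I would record the setup: by Corollary~\ref{corollary:k-1 hourglass graphs} the triangular snake graph $\mathcal{T}_\mathcal{G}$ has exactly $k$ sources and $k$ sinks, so $s=(s_1,\dots,s_k)$ and $t=(t_1,\dots,t_k)$ are genuine $k$-vertices in the sense of Definition~\ref{def:k-vertices}, and the relevant path matrix $M_{st}=(m_{ij})$ is the $k\times k$ unweighted path matrix whose entry $m_{ij}$ counts the directed paths from $s_i$ to $t_j$ (we take $w(e)=1$ for every edge).

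Next I would verify that $\mathcal{T}_\mathcal{G}$ meets the hypotheses of Lemma~\ref{lem:LGV}: it is a directed graph by construction, and it is acyclic because every arrow realizes a step in $\{(2,0),(1,1),(1,-1)\}$, each of which strictly increases the first coordinate, so no directed cycle can arise. Lemma~\ref{lem:LGV} then yields
\[
\det M_{st}=\bigl|\{R: R \text{ is a } k\text{-route from } s \text{ to } t \text{ in } \mathcal{T}_\mathcal{G}\}\bigr|.
\]
Invoking Theorem~\ref{the:Perfect_matchings_and_Routes}, which gives a bijection between $\text{Match}(\mathcal{G})$ and the set of $k$-routes from $s$ to $t$, and combining the two equalities produces
\[
m(\mathcal{G})=\bigl|\text{Match}(\mathcal{G})\bigr|=\bigl|\{R: R \text{ a } k\text{-route from } s \text{ to } t\}\bigr|=\det M_{st},
\]
which is exactly the asserted identity.

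The only delicate point, and the step I would treat most carefully, is ensuring that the sign-free form of Lemma~\ref{lem:LGV} genuinely applies here, namely that no non-identity permutation $\sigma$ contributes a vertex-disjoint path system joining $s_i$ to $t_{\sigma(i)}$. I would argue this from the planarity of $\mathcal{T}_\mathcal{G}$ together with condition~(3) of Definition~\ref{def:k-vertices}, which orders both the sources and the sinks by increasing $y$-coordinate: a vertex-disjoint system realizing a non-identity $\sigma$ would force two of its paths to cross in the plane, so only $\sigma=\mathrm{id}$ survives and the determinant coincides with the unsigned count of $k$-routes. Since the statement of Lemma~\ref{lem:LGV} is already phrased in this clean form for the compatibly ordered configuration at hand, the corollary then follows immediately.
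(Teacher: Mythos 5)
Your proof is correct and follows exactly the paper's own route: the paper likewise derives the corollary immediately by combining Theorem~\ref{the:Perfect_matchings_and_Routes} with Lemma~\ref{lem:LGV}. The additional checks you include (acyclicity of $\mathcal{T}_\mathcal{G}$, well-definedness of the $k$-vertices, and the non-permutability/planarity argument justifying the sign-free form of the Lindström--Gessel--Viennot lemma) are sound elaborations of details the paper leaves implicit.
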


\begin{proof}
     The result follows immediately from Theorem~\ref{the:Perfect_matchings_and_Routes} and Lemma~\ref{lem:LGV}.
\end{proof}

This result establishes a strong connection between combinatorial structures in snake graphs and determinant formulas. In the next section, we explore how these path matrices are related to Hankel matrices, a fundamental class of matrices that encode classical number sequences such as Fibonacci and Catalan numbers. By leveraging these connections, we derive new determinant identities that reveal deeper algebraic properties of snake graphs.

\begin{example}\label{ex:contracted snake graph}
Consider the snake graph of Example~\ref{ex:SnakeGraphTilings}. Its contracted snake graph is depicted as follows.

\begin{center}
\begin{tikzpicture}[domain=0:4, scale=0.9]

  
  \node (1) at (4,7) {$\circ$};
  \node (1) at (5,8) {$\bullet$};
  \node (1) at (6,7) {$\circ$};
  \node (1) at (5,7.5) {$1$};
  \draw (4.1,7) -- (5.9,7); 
  \draw (4.1,7.1) -- (4.9,7.9); 
  \draw (5.1,7.9) -- (5.9,7.1);
  
  \node (1) at (4,9) {$\circ$};
  \node (1) at (5,8) {$\bullet$};
  \node (1) at (6,9) {$\bullet$};
  \node (1) at (5,8.7) {$2$};
  \draw (5.1,8.1) -- (5.9,8.9); 
  \draw (4.1,8.9) -- (4.9,8.1);
  \draw (4.1,9) -- (5.9,9); 
  
  \node (1) at (5,8) {$\bullet$};
  \node (1) at (6,9) {$\bullet$};
  \node (1) at (7,8) {$\circ$};
  \node (1) at (6,8.5) {$3$};
  \draw (5.1,8) -- (6.9,8);
  \draw (6.1,8.9) -- (6.9,8.1);
  
\end{tikzpicture}
\end{center}
and its oriented contracted snake graph and associated path matrix are illustrated as follows:

\begin{center}
\begin{tikzpicture}[domain=0:4, scale=0.9]

  \node (1) at (2,8) {$\mathcal{T}_\mathcal{G} \quad = $};
  
    \node (1) at (11,8) {\quad $M_{st}=\begin{pmatrix}
    2 & 2 \\
    1 & 3
    \end{pmatrix}.$};

  \node (1) at (4,7) {$\circ$};
  \node (1) at (5,8) {$\bullet$};
  \node (1) at (6,7) {$\circ$};
  \node (1) at (5,7.5) {$1$};
  \draw[->, >=latex] (4.1,7) -- (5.9,7); 
  \draw[->, >=latex] (4.1,7.1) -- (4.9,7.9); 
  \draw[->, >=latex] (5.1,7.9) -- (5.9,7.1);
  
  \node (1) at (4,9) {$\circ$};
  \node (1) at (5,8) {$\bullet$};
  \node (1) at (6,9) {$\bullet$};
  \node (1) at (5,8.7) {$2$};
  \draw[->, >=latex] (5.1,8.1) -- (5.9,8.9); 
  \draw[->, >=latex] (4.1,8.9) -- (4.9,8.1);
  \draw[->, >=latex] (4.1,9) -- (5.9,9); 
  
    \node (1) at (5,8) {$\bullet$};
  \node (1) at (6,9) {$\bullet$};
  \node (1) at (7,8) {$\circ$};
  \node (1) at (6,8.5) {$3$};
  \draw[->, >=latex] (5.1,8) -- (6.9,8);
  \draw[->, >=latex] (6.1,8.9) -- (6.9,8.1);
  
   \node (1) at (3.6,7) {$s_1$};
   \node (1) at (3.6,9) {$s_2$};
   \node (1) at (6.4,7) {$t_1$};
   \node (1) at (7.4,8) {$t_2$};
\end{tikzpicture}
\end{center}

 The number of perfect matchings equals the determinant of $M_{st}$. There exist 4 perfect matchings in $\mathcal{G}$, each corresponding to distinct $2$-routes in $\mathcal{T}_\mathcal{G}$, which are:

\begin{center}
\begin{tikzpicture}[scale=1.2]


\draw[gray] (0,0) -- (0,1);
\draw[gray] (1,0) -- (1,1);
\draw[gray] (1,1) -- (0,1);
\draw[red, ultra thick] (0,0) -- (1,0);

\filldraw(0.5,0.5)     node[anchor=center] {$G_1$};

\draw[red, ultra thick] (0,1) -- (0,2);
\draw[gray] (1,1) -- (1,2);
\draw[gray] (1,2) -- (0,2);
\draw[gray] (0,1) -- (1,1);

\filldraw(0.5,1.5)     node[anchor=center] {$G_{2}$};

\draw[gray] (2,1) -- (2,2);
\draw[red, ultra thick] (1,2) -- (2,2);
\draw[red, ultra thick] (2,1) -- (1,1);

\filldraw(1.5,1.5)     node[anchor=center] {$G_{3}$};

\filldraw[fill=white!40,draw=black!80] (0,0) circle (2pt);
\filldraw[fill=white!40,draw=black!80] (1,0) circle (2pt);
\filldraw[fill=white!40,draw=black!80] (0,1) circle (2pt);
\filldraw[fill=white!40,draw=black!80] (1,1) circle (2pt);
\filldraw[fill=white!40,draw=black!80] (2,1) circle (2pt);
\filldraw[fill=white!40,draw=black!80] (0,2) circle (2pt);
\filldraw[fill=white!40,draw=black!80] (1,2) circle (2pt);
\filldraw[fill=white!40,draw=black!80] (2,2) circle (2pt);

\filldraw(0.5,-0.2) node[anchor=center] {\textcolor{red}{$e_1$}};
\filldraw(-0.2,1.5) node[anchor=center] {\textcolor{red}{$e_2$}};
\filldraw(1.5,0.8) node[anchor=center] {\textcolor{red}{$e_3$}};
\filldraw(1.5,2.2) node[anchor=center] {\textcolor{red}{$e_4$}};


\draw[gray] (3,0) -- (3,1);
\draw[gray] (4,0) -- (4,1);
\draw[gray] (4,1) -- (3,1);
\draw[blue, ultra thick] (3,0) -- (4,0);

\filldraw(3.5,0.5)     node[anchor=center] {$G_1$};

\draw[blue, ultra thick] (3,1) -- (3,2);
\draw[blue, ultra thick] (4,1) -- (4,2);
\draw[gray] (4,2) -- (3,2);
\draw[gray] (3,1) -- (4,1);

\filldraw(3.5,1.5)     node[anchor=center] {$G_{2}$};

\draw[blue, ultra thick] (5,1) -- (5,2);
\draw[gray] (4,2) -- (5,2);
\draw[gray] (5,1) -- (4,1);

\filldraw(4.6,1.5)     node[anchor=center] {$G_{3}$};

\filldraw[fill=white!40,draw=black!80] (3,0) circle (2pt);
\filldraw[fill=white!40,draw=black!80] (4,0) circle (2pt);
\filldraw[fill=white!40,draw=black!80] (3,1) circle (2pt);
\filldraw[fill=white!40,draw=black!80] (4,1) circle (2pt);
\filldraw[fill=white!40,draw=black!80] (5,1) circle (2pt);
\filldraw[fill=white!40,draw=black!80] (3,2) circle (2pt);
\filldraw[fill=white!40,draw=black!80] (4,2) circle (2pt);
\filldraw[fill=white!40,draw=black!80] (5,2) circle (2pt);

\filldraw(3.5,-0.2) node[anchor=center] {\textcolor{blue}{$e_1$}};
\filldraw(2.8,1.5) node[anchor=center] {\textcolor{blue}{$e_2$}};
\filldraw(4.2,1.5) node[anchor=center] {\textcolor{blue}{$e_3$}};
\filldraw(5.2,1.5) node[anchor=center] {\textcolor{blue}{$e_4$}};


\draw[gray] (6,0) -- (6,1);
\draw[gray] (7,0) -- (7,1);
\draw[green, ultra thick] (7,1) -- (6,1);
\draw[green, ultra thick] (6,0) -- (7,0);

\filldraw(6.5,0.5)     node[anchor=center] {$G_1$};

\draw[gray] (6,1) -- (6,2);
\draw[gray] (7,1) -- (7,2);
\draw[green, ultra thick] (7,2) -- (6,2);

\filldraw(6.5,1.5)     node[anchor=center] {$G_{2}$};

\draw[green, ultra thick] (8,1) -- (8,2);
\draw[gray] (7,2) -- (8,2);
\draw[gray] (8,1) -- (7,1);

\filldraw(7.5,1.5)     node[anchor=center] {$G_{3}$};

\filldraw[fill=white!40,draw=black!80] (6,0) circle (2pt);
\filldraw[fill=white!40,draw=black!80] (7,0) circle (2pt);
\filldraw[fill=white!40,draw=black!80] (6,1) circle (2pt);
\filldraw[fill=white!40,draw=black!80] (7,1) circle (2pt);
\filldraw[fill=white!40,draw=black!80] (8,1) circle (2pt);
\filldraw[fill=white!40,draw=black!80] (6,2) circle (2pt);
\filldraw[fill=white!40,draw=black!80] (7,2) circle (2pt);
\filldraw[fill=white!40,draw=black!80] (8,2) circle (2pt);

\filldraw(6.5,-0.2) node[anchor=center] {\textcolor{teal}{$e_1$}};
\filldraw(6.5,1.2) node[anchor=center] {\textcolor{teal}{$e_2$}};
\filldraw(6.5,2.2) node[anchor=center] {\textcolor{teal}{$e_3$}};
\filldraw(8.2,1.5) node[anchor=center] {\textcolor{teal}{$e_4$}};


\draw[purple, ultra thick] (9,0) -- (9,1);
\draw[purple, ultra thick] (10,0) -- (10,1);
\draw[gray] (10,1) -- (9,1);
\draw[gray] (9,0) -- (10,0);

\filldraw(9.5,0.5)     node[anchor=center] {$G_1$};

\draw[gray] (9,1) -- (9,2);
\draw[gray] (10,1) -- (10,2);
\draw[purple, ultra thick] (10,2) -- (9,2);
\draw[gray] (9,1) -- (10,1);

\filldraw(9.5,1.5)     node[anchor=center] {$G_{2}$};

\draw[purple, ultra thick] (11,1) -- (11,2);
\draw[gray] (10,2) -- (11,2);
\draw[gray] (11,1) -- (10,1);

\filldraw(10.5,1.5)     node[anchor=center] {$G_{3}$};

\filldraw[fill=white!40,draw=black!80] (9,0) circle (2pt);
\filldraw[fill=white!40,draw=black!80] (10,0) circle (2pt);
\filldraw[fill=white!40,draw=black!80] (9,1) circle (2pt);
\filldraw[fill=white!40,draw=black!80] (10,1) circle (2pt);
\filldraw[fill=white!40,draw=black!80] (11,1) circle (2pt);
\filldraw[fill=white!40,draw=black!80] (9,2) circle (2pt);
\filldraw[fill=white!40,draw=black!80] (10,2) circle (2pt);
\filldraw[fill=white!40,draw=black!80] (11,2) circle (2pt);

\filldraw(8.8,0.5) node[anchor=center] {\textcolor{purple}{$e_1$}};
\filldraw(10.2,0.5) node[anchor=center] {\textcolor{purple}{$e_2$}};
\filldraw(9.5,2.2) node[anchor=center] {\textcolor{purple}{$e_3$}};
\filldraw(11.2,1.5) node[anchor=center] {\textcolor{purple}{$e_4$}};

\end{tikzpicture} \par 
\vspace{1cm} \par
\begin{tikzpicture}[domain=0:4, scale=0.9]

  \node (1) at (4,7) {$\circ$};
  \node (1) at (5,8) {$\bullet$};
  \node (1) at (6,7) {$\circ$};
  \node (1) at (5,7.5) {$1$};
  \draw[->, >=latex, red, thick] (4.1,7) -- (5.9,7); 
  \draw[->, >=latex, gray] (4.1,7.1) -- (4.9,7.9); 
  \draw[->, >=latex, gray] (5.1,7.9) -- (5.9,7.1);
  
  \node (1) at (4,9) {$\circ$};
  \node (1) at (5,8) {$\bullet$};
  \node (1) at (6,9) {$\bullet$};
  \node (1) at (5,8.7) {$2$};
  \draw[->, >=latex, gray] (5.1,8.1) -- (5.9,8.9); 
  \draw[->, >=latex, red, thick] (4.1,8.9) -- (4.9,8.1);
  \draw[->, >=latex, gray] (4.1,9) -- (5.9,9); 
  
  \node (1) at (7,8) {$\circ$};
  \node (1) at (6,8.5) {$3$};
  \draw[->, >=latex, red, thick] (5.1,8) -- (6.9,8);
  \draw[->, >=latex, gray] (6.1,8.9) -- (6.9,8.1);

\filldraw(5,6.8) node[anchor=center] {\textcolor{red}{$e_1$}};
\filldraw(4.3,8.4) node[anchor=center] {\textcolor{red}{$e_2$}};
\filldraw(6,7.8) node[anchor=center] {\textcolor{red}{$e_3$}};
\filldraw(6,9.2) node[anchor=center] {\textcolor{red}{$\textbf{e}_4$}};

  
  \node (1) at (8,7) {$\circ$};
  \node (1) at (9,8) {$\bullet$};
  \node (1) at (10,7) {$\circ$};
  \node (1) at (9,7.5) {$1$};
  \draw[->, >=latex, blue, thick] (8.1,7) -- (9.9,7); 
  \draw[->, >=latex, gray] (8.1,7.1) -- (8.9,7.9); 
  \draw[->, >=latex] (9.1,7.9) -- (9.9,7.1);
  
  \node (1) at (8,9) {$\circ$};
  \node (1) at (9,8) {$\bullet$};
  \node (1) at (10,9) {$\bullet$};
  \node (1) at (9,8.7) {$2$};
  \draw[->, >=latex, blue, thick] (9.1,8.1) -- (9.9,8.9); 
  \draw[->, >=latex, blue, thick] (8.1,8.9) -- (8.9,8.1);
  \draw[->, >=latex, gray] (8.1,9) -- (9.9,9); 
  
  \node (1) at (11,8) {$\circ$};
  \node (1) at (10,8.5) {$3$};
  \draw[->, >=latex, gray] (9.1,8) -- (10.9,8);
  \draw[->, >=latex, blue, thick] (10.1,8.9) -- (10.9,8.1);

\filldraw(9,6.8) node[anchor=center] {\textcolor{blue}{$e_1$}};
\filldraw(8.3,8.4) node[anchor=center] {\textcolor{blue}{$e_2$}};
\filldraw(9.7,8.4) node[anchor=center] {\textcolor{blue}{$e_3$}};
\filldraw(10.7,8.6) node[anchor=center] {\textcolor{blue}{$e_4$}};

  
  \node (1) at (12,7) {$\circ$};
  \node (1) at (13,8) {$\bullet$};
  \node (1) at (14,7) {$\circ$};
  \node (1) at (13,7.5) {$1$};
  \draw[->, >=latex, green, thick] (12.1,7) -- (13.9,7); 
  \draw[->, >=latex, gray] (12.1,7.1) -- (12.9,7.9); 
  \draw[->, >=latex, gray] (13.1,7.9) -- (13.9,7.1);
  
  \node (1) at (12,9) {$\circ$};
  \node (1) at (14,9) {$\bullet$};
  \node (1) at (13,8.7) {$2$};
  \draw[->, >=latex, gray] (13.1,8.1) -- (13.9,8.9); 
  \draw[->, >=latex, gray] (12.1,8.9) -- (12.9,8.1);
  \draw[->, >=latex, green, thick] (12.1,9) -- (13.9,9); 
  
  \node (1) at (15,8) {$\circ$};
  \node (1) at (14,8.5) {$3$};
  \draw[->, >=latex, gray] (13.1,8) -- (14.9,8);
  \draw[->, >=latex, green, thick] (14.1,8.9) -- (14.9,8.1);

\filldraw(13,6.8) node[anchor=center] {\textcolor{teal}{$e_1$}};
\filldraw(12.6,8) node[anchor=center] {\textcolor{teal}{$\textbf{e}_2$}};
\filldraw(13,9.2) node[anchor=center] {\textcolor{teal}{$e_3$}};
\filldraw(14.7,8.6) node[anchor=center] {\textcolor{teal}{$e_4$}};

  
  \node (1) at (16,7) {$\circ$};
  \node (1) at (17,8) {$\bullet$};
  \node (1) at (18,7) {$\circ$};
  \node (1) at (17,7.5) {$1$};
  \draw[->, >=latex] (16.1,7) -- (17.9,7); 
  \draw[->, >=latex, purple, thick] (16.1,7.1) -- (16.9,7.9); 
  \draw[->, >=latex, purple, thick] (17.1,7.9) -- (17.9,7.1);
  
  \node (1) at (16,9) {$\circ$};
  \node (1) at (18,9) {$\bullet$};
  \node (1) at (17,8.7) {$2$};
  \draw[->, >=latex, gray] (17.1,8.1) -- (17.9,8.9); 
  \draw[->, >=latex, gray] (16.1,8.9) -- (16.9,8.1);
  \draw[->, >=latex, purple, thick] (16.1,9) -- (17.9,9); 
  
  \node (1) at (19,8) {$\circ$};
  \node (1) at (18,8.5) {$3$};
  \draw[->, >=latex, gray] (17.1,8) -- (18.9,8);
  \draw[->, >=latex, purple, thick] (18.1,8.9) -- (18.9,8.1);

\filldraw(16.3,7.6) node[anchor=center] {\textcolor{purple}{$e_1$}};
\filldraw(17.7,7.6) node[anchor=center] {\textcolor{purple}{$e_2$}};
\filldraw(17,9.2) node[anchor=center] {\textcolor{purple}{$e_3$}};
\filldraw(18.7,8.6) node[anchor=center] {\textcolor{purple}{$e_4$}};
\end{tikzpicture}
\end{center}

We observe that when examining the contracted snake graph corresponding to a perfect matching, it aligns with the $k$-route immediately below it. Likewise, applying the decontraction process to a $k$-route yields the perfect matching that lies above it.
\end{example}

\begin{remark}\label{re:opposite edge contraction}
    Note that in Definition~\ref{def:contracted_snake_graph}, the assignment of northern and southern contractions to odd and even indices is arbitrary. One could equally well consider the opposite assignment, where northern contractions are associated with even indices and southern contractions with odd indices. Although the resulting contracted snake graphs are different with potentially varying numbers of sources and sinks, the number of routes remains invariant. Moreover, the bijection between the perfect matchings and the routes of the contracted snake graph obtained by this opposite assignment can still be established. Throughout this paper, we will consider both constructions, as the combinatorial properties of one may be more convenient than the other, depending on the desired formulas for counting paths or routes.
\end{remark}

\begin{example}
Consider the snake graph of Example~\ref{ex:SnakeGraphTilings}. Its contracted snake graph $\mathcal{T}_{\mathcal{G}}$ is depicted in Example~\ref{ex:contracted snake graph}. If we consider the opposite assignment, where northern contractions are associated with even indices and southern contractions with odd indices, we obtain the following triangular snake graph $\mathcal{T}_{\mathcal{G}}^{op}$ and corresponding routes

\begin{center}
\begin{tikzpicture}[domain=0:4, scale=2]

\draw[->, >=latex, gray] (1,1) -- (1.45,0.55);
\draw[->, >=latex, gray] (1,1) -- (1.95,1); 
\draw[->, >=latex, gray] (1.5,0.5) -- (1.95,0.95); 

\draw[->, >=latex, gray] (1,1) -- (1.45,1.45);
\draw[->, >=latex, gray] (1.5,1.5) -- (1.95,1.05);

\draw[->, >=latex, gray] (2,1) -- (2.45,1.45); 
\draw[->, >=latex, gray] (1.5,1.5) -- (2.45,1.5); 


\node (1) at (1,1) {$\bullet$};
\node (1) at (1.5,1.5) {$\bullet$};
\node (1) at (2.5,1.5) {$\bullet$};
\node (1) at (1.5,0.5) {$\bullet$};
\node (1) at (2,1) {$\bullet$};
\node (1) at (0.75,1) {$s_1$};
\node (1) at (2.75,1.5) {$t_1$};

\filldraw(1.5,0.8)     node[anchor=center] {$1$};
\filldraw(1.5,1.26)     node[anchor=center] {$2$};
\filldraw(2,1.3)     node[anchor=center] {$3$};

\node (1) at (0,1) {$\mathcal{T}_{\mathcal{G}}^{op} \quad =$};

\end{tikzpicture}
\end{center}
\begin{center}
\begin{tikzpicture}[domain=0:4, scale=2]

\draw[->, >=latex, gray] (1,1) -- (1.45,0.55);
\draw[->, >=latex, gray] (1,1) -- (1.95,1); 
\draw[->, >=latex, gray] (1.5,0.5) -- (1.95,0.95); 

\draw[->, >=latex, thick, red] (1,1) -- (1.45,1.45);
\draw[->, >=latex, gray] (1.5,1.5) -- (1.95,1.05);

\draw[->, >=latex, gray] (2,1) -- (2.45,1.45); 
\draw[->, >=latex, thick, red] (1.5,1.5) -- (2.45,1.5); 


\node (1) at (1,1) {$\bullet$};
\node (1) at (1.5,1.5) {$\bullet$};
\node (1) at (2.5,1.5) {$\bullet$};
\node (1) at (1.5,0.5) {$\bullet$};
\node (1) at (2,1) {$\bullet$};

\filldraw(1.5,0.8)     node[anchor=center] {$1$};
\filldraw(1.5,1.26)     node[anchor=center] {$2$};
\filldraw(2,1.3)     node[anchor=center] {$3$};

\end{tikzpicture} \hspace{0.5cm} \begin{tikzpicture}[domain=0:4, scale=2]

\draw[->, >=latex, gray] (1,1) -- (1.45,0.55);
\draw[->, >=latex, gray] (1,1) -- (1.95,1); 
\draw[->, >=latex, gray] (1.5,0.5) -- (1.95,0.95); 

\draw[->, >=latex, thick, blue] (1,1) -- (1.45,1.45);
\draw[->, >=latex, thick, blue] (1.5,1.5) -- (1.95,1.05);

\draw[->, >=latex, thick, blue] (2,1) -- (2.45,1.45); 
\draw[->, >=latex, gray] (1.5,1.5) -- (2.45,1.5); 


\node (1) at (1,1) {$\bullet$};
\node (1) at (1.5,1.5) {$\bullet$};
\node (1) at (2.5,1.5) {$\bullet$};
\node (1) at (1.5,0.5) {$\bullet$};
\node (1) at (2,1) {$\bullet$};

\filldraw(1.5,0.8)     node[anchor=center] {$1$};
\filldraw(1.5,1.26)     node[anchor=center] {$2$};
\filldraw(2,1.3)     node[anchor=center] {$3$};

\end{tikzpicture} \hspace{0.5cm} \begin{tikzpicture}[domain=0:4, scale=2]

\draw[->, >=latex, gray] (1,1) -- (1.45,0.55);
\draw[->, >=latex, thick, green] (1,1) -- (1.95,1); 
\draw[->, >=latex, gray] (1.5,0.5) -- (1.95,0.95); 

\draw[->, >=latex, gray] (1,1) -- (1.45,1.45);
\draw[->, >=latex, gray] (1.5,1.5) -- (1.95,1.05);

\draw[->, >=latex, thick, green] (2,1) -- (2.45,1.45); 
\draw[->, >=latex, gray] (1.5,1.5) -- (2.45,1.5); 


\node (1) at (1,1) {$\bullet$};
\node (1) at (1.5,1.5) {$\bullet$};
\node (1) at (2.5,1.5) {$\bullet$};
\node (1) at (1.5,0.5) {$\bullet$};
\node (1) at (2,1) {$\bullet$};

\filldraw(1.5,0.8)     node[anchor=center] {$1$};
\filldraw(1.5,1.26)     node[anchor=center] {$2$};
\filldraw(2,1.3)     node[anchor=center] {$3$};

\end{tikzpicture} \hspace{0.5cm} \begin{tikzpicture}[domain=0:4, scale=2]

\draw[->, >=latex, thick, purple] (1,1) -- (1.45,0.55);
\draw[->, >=latex, gray] (1,1) -- (1.95,1); 
\draw[->, >=latex, thick, purple] (1.5,0.5) -- (1.95,0.95); 

\draw[->, >=latex, gray] (1,1) -- (1.45,1.45);
\draw[->, >=latex, gray] (1.5,1.5) -- (1.95,1.05);

\draw[->, >=latex, thick, purple] (2,1) -- (2.45,1.45); 
\draw[->, >=latex, gray] (1.5,1.5) -- (2.45,1.5); 


\node (1) at (1,1) {$\bullet$};
\node (1) at (1.5,1.5) {$\bullet$};
\node (1) at (2.5,1.5) {$\bullet$};
\node (1) at (1.5,0.5) {$\bullet$};
\node (1) at (2,1) {$\bullet$};

\filldraw(1.5,0.8)     node[anchor=center] {$1$};
\filldraw(1.5,1.26)     node[anchor=center] {$2$};
\filldraw(2,1.3)     node[anchor=center] {$3$};

\end{tikzpicture}

\end{center}

\end{example}

\section{Fibonacci identities and Hankel matrices}\label{sec:Fibonacci identities and Hankel matrices}

\subsection{Identities in straight snake graphs}
\label{sec:ladder_graphs_hankel_and_general_formula}

In this section, our primary objective is to identify matrices exhibiting an "equivalence" to particular families of Hankel matrices based on lattice paths. This exploration intends to present instances where simpler acyclic graphs can be identified. These graphs provide a more straightforward approach for determining the entries of the path matrix and computing specific determinants. In particular, in this subsection, we work with a special class of snake graphs known as ladder graphs before addressing the general case. We fix the following notation:

\begin{definition} 
A \emph{ladder graph} $L_n$ is defined as a straight snake graph with $n$ tiles. 
\end{definition}

Following the construction from the previous section, in the next proposition, we consider a directed acyclic graph that contains as a subgraph the triangular snake graph associated with a ladder graph. Later, we will see that this subgraph preserves the same number of paths as the graph considered in the proposition derived from Aztec diamonds.

\begin{proposition}\label{prop:Catalan_Fibonacci} Let $C$ be the sequence of Catalan numbers $C_n$ and let $F_n$ be the $n$-th Fibonacci number. Then the following relationship holds:
\[
\det (H_k(C)+H_k^{\prime}(C))=F_{2k+1}, \]
where $H_{k}(C)$ and $H_{k}^{\prime}(C)$ are the Hankel matrices of the Catalan numbers.
\end{proposition}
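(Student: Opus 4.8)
The plan is to recognize the matrix $H_k(C)+H_k^{\prime}(C)$ as a path matrix and then feed it into the Lindström--Gessel--Viennot machinery developed above. Writing out the entries,
\[
\bigl(H_k(C)+H_k^{\prime}(C)\bigr)_{ij}=C_{i+j-2}+C_{i+j-1},\qquad 1\le i,j\le k,
\]
so this is the Hankel matrix of the shifted sequence $\mu_n=C_n+C_{n+1}$. I would identify it with the path matrix $M_{st}$ of the triangular snake graph $\mathcal{T}_{L_{2k-1}}$ of the vertical ladder $L_{2k-1}$ (equivalently, the Dyck-path directed acyclic graph derived from the Aztec diamond construction described before the statement), with $s=(s_1,\dots,s_k)$ and $t=(t_1,\dots,t_k)$ the $k$-vertices of Definition~\ref{def:k-vertices}. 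The ladder with $2k-1$ tiles contains exactly $k-1$ hourglass subgraphs, hence has $k$ sources and $k$ sinks by Corollary~\ref{corollary:k-1 hourglass graphs}, so the dimensions match.

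The heart of the argument is the entry count. I would show that the number of directed lattice paths (steps $(2,0),(1,1),(1,-1)$) from $s_i$ to $t_j$ in this graph equals $C_{i+j-2}+C_{i+j-1}$. Since the sources and sinks are ordered by increasing $y$-coordinate (Definition~\ref{def:k-vertices}), the horizontal room available to a path from $s_i$ to $t_j$ is governed by $i+j$; splitting such paths according to whether they pass through the top cell of the final hourglass or stay confined to a sub-triangle yields two families, one in bijection with Dyck paths of semilength $i+j-1$ (contributing $C_{i+j-1}$) and one with Dyck paths of semilength $i+j-2$ (contributing $C_{i+j-2}$). Summing gives $M_{st}=H_k(C)+H_k^{\prime}(C)$; as a sanity check, the two boundary columns $j=1$ and $j=k$ recover the classical evaluations $\det H_k(C)=\det H_k^{\prime}(C)=1$, the latter being exactly the determinant computed in \S\ref{subsec:Hankel_path}.

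With this identification, Lemma~\ref{lem:LGV} gives $\det\bigl(H_k(C)+H_k^{\prime}(C)\bigr)=\det M_{st}=\#\{k\text{-routes from }s\text{ to }t\}$, and by Theorem~\ref{the:Perfect_matchings_and_Routes} together with its corollary this count equals $m(L_{2k-1})$. Finally, a straight snake graph with $d$ tiles is the snake graph $\mathcal{G}[1,\dots,1]$ with $d+1$ ones, whose continued fraction has numerator $F_{d+2}$; by Theorem~\ref{frac} this numerator is precisely $m(L_d)$, so $m(L_{2k-1})=F_{2k+1}$, which is the claim. (This also matches the classical $2\times n$ domino-tiling count recalled in the introduction.)

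The step I expect to be the main obstacle is the entry count of the second paragraph: aligning the prescribed sources and sinks with the Hankel indexing, and proving the clean dichotomy ``admissible path $\leftrightarrow$ Dyck path of semilength $i+j-1$ or $i+j-2$,'' requires careful bookkeeping of how the hourglass chain restricts the available steps and of the ordering convention in Definition~\ref{def:k-vertices}. As a purely algebraic checkpoint that sidesteps the lattice-path geometry, I would expand $\det\bigl(H_k(C)+H_k^{\prime}(C)\bigr)$ multilinearly over columns: writing column $j$ as $v_{j-1}+v_j$ with $v_m=(C_m,\dots,C_{m+k-1})^{T}$, only the $k+1$ ``single-switch'' selections survive, so $\det=\sum_{r=0}^{k}\Delta_r$, where $\Delta_r$ is the maximal minor of the $k\times(k+1)$ Catalan Hankel array obtained by deleting column $r$, with $\Delta_0=\Delta_k=1$. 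One then verifies the total against the recurrence $F_{2k+1}=3F_{2k-1}-F_{2k-3}$ (base cases $2,5$), either by reading each $\Delta_r$ itself as a non-intersecting Dyck-path count via Lemma~\ref{lem:LGV}, or by a condensation argument tracking the shifted Hankel minors alongside the $D_k=\det\bigl(H_k(C)+H_k^{\prime}(C)\bigr)$.
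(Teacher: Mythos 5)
Your overall skeleton matches the paper's: interpret the entries $C_{i+j-2}+C_{i+j-1}$ as path counts in a directed acyclic graph, apply Lemma~\ref{lem:LGV}, identify the resulting $k$-routes with perfect matchings of a straight snake graph, and count those matchings by $F_{2k+1}$ (your continued-fraction argument via Theorem~\ref{frac} for this last step is correct, as is your source/sink count via Corollary~\ref{corollary:k-1 hourglass graphs}). But the central identification is wrong as stated: $H_k(C)+H_k^{\prime}(C)$ is \emph{not} the path matrix of the triangular snake graph $\mathcal{T}_{L_{2k-1}}$. By Proposition~\ref{prop:Ladder_Fibonacci}, the path matrix of $\mathcal{T}_{L_{2k-1}}$ is tridiagonal with entries $2,3,1,0$; between any source $s_i$ and sink $t_j$ of $\mathcal{T}_{L_{2k-1}}$ there are at most three paths, so no counting argument inside that graph --- in particular your proposed dichotomy through ``the top cell of the final hourglass'' --- can ever produce Catalan numbers. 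The two graphs you declare equivalent (the triangular snake graph and the Dyck-path grid augmented by diagonal arrows, Figure~\ref{fig:Catalan acyclic-directed graph general} in the paper) are genuinely different graphs with different path matrices; what they share is only the \emph{number of $k$-routes}, and that coincidence is precisely what must be proved, not assumed.

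This error propagates into your appeal to Theorem~\ref{the:Perfect_matchings_and_Routes}: that theorem concerns routes in $\mathcal{T}_\mathcal{G}$, whereas the LGV computation with Catalan entries necessarily lives in the larger augmented Dyck-path graph. To close the gap you need either (i) the paper's route: prove the entry count in that larger graph by splitting paths according to whether they use the added diagonal (shortcut) arrows, giving $C_{i+j-1}$ and $C_{i+j-2}$ respectively, and then connect $k$-routes of this graph to perfect matchings of $L_{2k-1}$ via the decorated-domino correspondence of \S~\ref{subsection:Aztec diamonds and Schröder paths} (equivalently, argue as in Example~\ref{Ex:L4_Aztec_Diamond} that adjoining the extra arrows does not change the number of $k$-routes); or (ii) your purely algebraic fallback, which is sound in outline --- the multilinear expansion $\det(H_k(C)+H_k^{\prime}(C))=\sum_{r=0}^{k}\Delta_r$ with only single-switch terms surviving is correct, and for instance $1+6+5+1=13=F_7$ when $k=3$ --- but verifying that this sum of Hankel minors satisfies $D_k=3D_{k-1}-D_{k-2}$ is exactly the nontrivial content, and it is left unproved. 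As written, neither branch is complete, so the proposal has a genuine gap at its key step.
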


\begin{proof}
The main idea of this proof is to identify a suitable graph that enables us to employ Lemma~\ref{lem:LGV}, thereby equating the requested determinant with the number of paths in the said graph. Notably, each component of the matrix counts the number of lattice paths in $\mathbb{Z}^2$ that start at $(0,0)$ and end at $(2n,0)$, and the number of lattice paths that start at $(0,0)$ and end at $(2n+2,0)$. Observe that in the acyclic-directed graph in Figure \ref{fig:Catalan acyclic-directed graph general}, there are precisely $C_{i+j-2 }+C_{i+j-1}$ paths from the vertex $s_i$ to the vertex $t_j$. 
\begin{figure}[ht]
\begin{center}
\begin{tikzpicture}[y=.3cm, x=.3cm,font=\normalsize, scale=0.5,rotate=-45]

\draw[gray,dashed] (29,14) -- (26,11);
\draw[gray,dashed] (-4,-19) -- (-1,-16);
\draw[->, >=latex] (-5,15) -- (0,15);

\draw[->, >=latex] (-5,10) -- (-0.6,14.6);
\draw[->, >=latex] (0,5) -- (4.6,9.6);
\draw[->, >=latex] (5,0) -- (9.6,4.6);
\draw[->, >=latex] (10,-5) -- (14.6,-0.6);

\foreach \y in {-15,-10,...,10}
 {\draw[gray,dashed] (-4,\y) -- (-1,\y);}
 
\foreach \y in {-10,-5,...,15,20}
 {\draw[->, >=latex] (-5,\y-10) -- (-5,\y-5);} 
 
\foreach \x in {0,5,...,25}
 {\draw[gray,dashed] (\x,14) -- (\x,11);} 

  \foreach \x in {0,5,...,25}
 {         \draw[->, >=latex] (\x,15) -- (\x+5,15);    
         } 
         
  \foreach \x in {0,5,...,20}
 {         \draw[->, >=latex] (\x,10) -- (\x+5,10);
           \draw[->, >=latex] (\x,5) -- (\x,10);
      
         } 
         
  \foreach \x in {0,5,...,15}
 {         \draw[->, >=latex] (\x,5) -- (\x+5,5);
           \draw[->, >=latex] (\x,0) -- (\x,5);
      
         } 
           
  \foreach \x in {0,5,10}
 {         \draw[->, >=latex] (\x,0) -- (\x+5,0);
           \draw[->, >=latex] (\x,-5) -- (\x,0);
      
         } 
         
  \foreach \x in {0,5}
 {         \draw[->, >=latex] (\x,-5) -- (\x+5,-5);
           \draw[->, >=latex] (\x,-10) -- (\x,-5);
      
         }          

 \draw[->, >=latex] (0,-10) -- (5,-10);
 \draw[->, >=latex] (0,-15) -- (0,-10);

\node (4) at (32,15) {$t_n$};
\node (5) at (27,10) {$t_3$};
\node (6) at (22,5) {$t_2$};
\node (7) at (17,0) {$t_1$};
\node (8) at (10,-7) {$s_1$};
\node (9) at (5,-12) {$s_2$};
\node (10) at (0,-17) {$s_3$};
\node (11) at (-5,-22) {$s_n$};
\end{tikzpicture}
\end{center}
    \caption{Catalan acyclic-directed graph associated to $n$-routes.}
\label{fig:Catalan acyclic-directed graph general}
\end{figure}
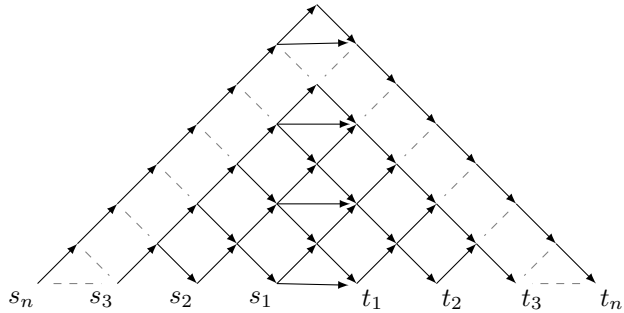
Therefore, the requested determinant counts the $n$-routes from $s$ to $t$ in the aforementioned graph. These paths correspond to the Aztec diamonds defined in \ref{subsection:Aztec diamonds and Schröder paths}, which, in turn, are linked to perfect matchings. Consequently, the result follows immediately.
\end{proof}

\begin{example} \label{Ex:L4_Aztec_Diamond}
For the five perfect matchings of $L_3$ we obtain the following domino tiling of $Az(2)$:
\begin{center}
\begin{tikzpicture}[y=.3cm, x=.3cm,font=\normalsize,scale=0.6]
\foreach \i in {0,1} \draw [gray] (4*\i,0) -- (4*\i,4);
\foreach \i in {0,1} \draw [gray] (4*\i,4) -- (4*\i,8);
\foreach \i in {0,1} \draw [gray] (4*\i,8) -- (4*\i,12);
\foreach \i in {0,...,3} \draw [red, ultra thick] (0,4*\i) -- (4,4*\i);
\foreach \i in {0,...,3} \filldraw[fill=white!40,draw=black!80] (0,4*\i) circle (3pt)    node[anchor=north] {\small };
\foreach \i in {0,...,3} \filldraw[fill=white!40,draw=black!80] (4,4*\i) circle (3pt)    node[anchor=north] {\small };

\filldraw[fill=white!40,draw=white!80] (10,5) circle (3pt)    node[anchor=south] {\huge $\Rightarrow$};
\end{tikzpicture} \hspace{0.5cm}
\begin{tikzpicture}[domain=0:4, scale=0.7]
  \draw (3,0) -- (5,0);
  \draw (3,2) -- (3,-2);
  \draw (5,-2) -- (5,2);
  \draw (3,2) -- (5,2);
  \draw (3,-2) -- (5,-2);
  \draw (3,-1) -- (5,-1);
  \draw (3,1) -- (5,1);
  
\filldraw[fill=white!40,draw=white!80] (6.8,-0.35) circle (3pt)    node[anchor=south] {\huge $\Rightarrow$};
\end{tikzpicture} \hspace{0.5cm} \begin{tikzpicture}[domain=0:4, scale=0.7]
  \draw (6,-1) -- (6,1);
  \draw (2,-1) -- (2,1);
  \draw (2,1) -- (3,1);
  \draw (3,0) -- (5,0);
  \draw (5,1) -- (6,1);
  \draw (6,-1) -- (5,-1);
  \draw (3,-1) -- (2,-1);
  \draw (3,2) -- (3,-2);
  \draw (5,-2) -- (5,2);
  \draw (3,2) -- (5,2);
  \draw (3,-2) -- (5,-2);
  \draw (3,-1) -- (5,-1);
  \draw (3,1) -- (5,1);
\end{tikzpicture}
\end{center}
\vspace{0.25cm}
\begin{center}
\begin{tikzpicture}[y=.3cm, x=.3cm,font=\normalsize,scale=0.6]
\foreach \i in {0,1} \draw [gray] (4*\i,0) -- (4*\i,4);
\foreach \i in {0,1} \draw [gray] (4*\i,4) -- (4*\i,8);
\foreach \i in {0,1} \draw [gray] (4*\i,8) -- (4*\i,12);
\foreach \i in {0,...,3} \draw [gray] (0,4*\i) -- (4,4*\i);
\foreach \i in {0,1} \draw [red, ultra thick] (4*\i,0) -- (4*\i,4);
\foreach \i in {2,3} \draw [red, ultra thick] (0,4*\i) -- (4,4*\i);
\foreach \i in {0,...,3} \filldraw[fill=white!40,draw=black!80] (0,4*\i) circle (3pt)    node[anchor=north] {\small };
\foreach \i in {0,...,3} \filldraw[fill=white!40,draw=black!80] (4,4*\i) circle (3pt)    node[anchor=north] {\small };

\filldraw[fill=white!40,draw=white!80] (10,5) circle (3pt)    node[anchor=south] {\huge $\Rightarrow$};
\end{tikzpicture} \hspace{0.5cm}
\begin{tikzpicture}[domain=0:4,scale=0.7]
   \draw (3,0) -- (5,0);
  \draw (3,2) -- (3,-2);
  \draw (5,-2) -- (5,2);
  \draw (3,2) -- (5,2);
  \draw (3,-2) -- (5,-2);
  \draw (4,-2) -- (4,0);
  \draw (3,1) -- (5,1);

\filldraw[fill=white!40,draw=white!80] (6.8,-0.35) circle (3pt)    node[anchor=south] {\huge $\Rightarrow$};
\end{tikzpicture} \hspace{0.5cm} \begin{tikzpicture}[domain=0:4, scale=0.7]
  \draw (6,-1) -- (6,1);
  \draw (2,-1) -- (2,1);
  \draw (2,1) -- (3,1);
  \draw (3,0) -- (5,0);
  \draw (5,1) -- (6,1);
  \draw (6,-1) -- (5,-1);
  \draw (3,-1) -- (2,-1);
  \draw (3,2) -- (3,-2);
  \draw (5,-2) -- (5,2);
  \draw (3,2) -- (5,2);
  \draw (3,-2) -- (5,-2);
  \draw (4,-2) -- (4,0);
  \draw (3,1) -- (5,1);
\end{tikzpicture}
\end{center}
\vspace{0.25cm}
\begin{center}
\begin{tikzpicture}[y=.3cm, x=.3cm,font=\normalsize,scale=0.6]
\foreach \i in {0,1} \draw [gray] (4*\i,0) -- (4*\i,4);
\foreach \i in {0,1} \draw [gray] (4*\i,4) -- (4*\i,8);
\foreach \i in {0,1} \draw [gray] (4*\i,8) -- (4*\i,12);
\foreach \i in {0,...,3} \draw [gray] (0,4*\i) -- (4,4*\i);
\foreach \i in {0,1} \draw [red, ultra thick] (4*\i,8) -- (4*\i,12);
\foreach \i in {0,1} \draw [red, ultra thick] (0,4*\i) -- (4,4*\i);
\foreach \i in {0,...,3} \filldraw[fill=white!40,draw=black!80] (0,4*\i) circle (3pt)    node[anchor=north] {\small };
\foreach \i in {0,...,3} \filldraw[fill=white!40,draw=black!80] (4,4*\i) circle (3pt)    node[anchor=north] {\small };

\filldraw[fill=white!40,draw=white!80] (10,5) circle (3pt)    node[anchor=south] {\huge $\Rightarrow$};
\end{tikzpicture} \hspace{0.5cm}
\begin{tikzpicture}[domain=0:4,scale=0.7]
  \draw (3,0) -- (5,0);
  \draw (3,2) -- (3,-2);
  \draw (5,-2) -- (5,2);
  \draw (3,2) -- (5,2);
  \draw (3,-2) -- (5,-2);
  \draw (4,0) -- (4,2);
  \draw (3,-1) -- (5,-1);

\filldraw[fill=white!40,draw=white!80] (6.8,-0.35) circle (3pt)    node[anchor=south] {\huge $\Rightarrow$};
\end{tikzpicture} \hspace{0.5cm} \begin{tikzpicture}[domain=0:4, scale=0.7]
  \draw (6,-1) -- (6,1);
  \draw (2,-1) -- (2,1);
  \draw (2,1) -- (3,1);
  \draw (3,0) -- (5,0);
  \draw (5,1) -- (6,1);
  \draw (6,-1) -- (5,-1);
  \draw (3,-1) -- (2,-1);
  \draw (3,2) -- (3,-2);
  \draw (5,-2) -- (5,2);
  \draw (3,2) -- (5,2);
  \draw (3,-2) -- (5,-2);
  \draw (4,0) -- (4,2);
  \draw (3,-1) -- (5,-1);
\end{tikzpicture}
\end{center}
\vspace{0.25cm}
\begin{center}
\begin{tikzpicture}[y=.3cm, x=.3cm,font=\normalsize,scale=0.6]
\foreach \i in {0,1} \draw [gray] (4*\i,0) -- (4*\i,4);
\foreach \i in {0,1} \draw [gray] (4*\i,4) -- (4*\i,8);
\foreach \i in {0,1} \draw [gray] (4*\i,8) -- (4*\i,12);
\foreach \i in {0,...,3} \draw [gray] (0,4*\i) -- (4,4*\i);
\foreach \i in {0,1} \draw [red, ultra thick] (4*\i,0) -- (4*\i,4);
\foreach \i in {0,1} \draw [red, ultra thick] (4*\i,8) -- (4*\i,12);
\foreach \i in {0,...,3} \filldraw[fill=white!40,draw=black!80] (0,4*\i) circle (3pt)    node[anchor=north] {\small };
\foreach \i in {0,...,3} \filldraw[fill=white!40,draw=black!80] (4,4*\i) circle (3pt)    node[anchor=north] {\small };

\filldraw[fill=white!40,draw=white!80] (10,5) circle (3pt)    node[anchor=south] {\huge $\Rightarrow$};
\end{tikzpicture} \hspace{0.5cm}
\begin{tikzpicture}[domain=0:4,scale=0.7]
  \draw (3,0) -- (5,0);
  \draw (3,2) -- (3,-2);
  \draw (5,-2) -- (5,2);
  \draw (3,2) -- (5,2);
  \draw (3,-2) -- (5,-2);
  \draw (4,-2) -- (4,2);
  \filldraw[fill=white!40,draw=white!80] (6.8,-0.35) circle (3pt)    node[anchor=south] {\huge $\Rightarrow$};
\end{tikzpicture} \hspace{0.5cm} \begin{tikzpicture}[domain=0:4, scale=0.7]
  \draw (6,-1) -- (6,1);
  \draw (2,-1) -- (2,1);
  \draw (2,1) -- (3,1);
  \draw (3,0) -- (5,0);
  \draw (5,1) -- (6,1);
  \draw (6,-1) -- (5,-1);
  \draw (3,-1) -- (2,-1);
  \draw (3,2) -- (3,-2);
  \draw (5,-2) -- (5,2);
  \draw (3,2) -- (5,2);
  \draw (3,-2) -- (5,-2);
  \draw (4,-2) -- (4,2);
\end{tikzpicture}
\end{center}
\vspace{0.25cm}
\begin{center}
\begin{tikzpicture}[y=.3cm, x=.3cm,font=\normalsize,scale=0.6]
\foreach \i in {0,1} \draw [gray] (4*\i,0) -- (4*\i,4);
\foreach \i in {0,1} \draw [gray] (4*\i,4) -- (4*\i,8);
\foreach \i in {0,1} \draw [gray] (4*\i,8) -- (4*\i,12);
\foreach \i in {0,...,3} \draw [gray] (0,4*\i) -- (4,4*\i);
\foreach \i in {0,1} \draw [red, ultra thick] (4*\i,4) -- (4*\i,8);
\foreach \i in {0,3} \draw [red, ultra thick] (0,4*\i) -- (4,4*\i);
\foreach \i in {0,...,3} \filldraw[fill=white!40,draw=black!80] (0,4*\i) circle (3pt)    node[anchor=north] {\small };
\foreach \i in {0,...,3} \filldraw[fill=white!40,draw=black!80] (4,4*\i) circle (3pt)    node[anchor=north] {\small };

\filldraw[fill=white!40,draw=white!80] (10,5) circle (3pt)    node[anchor=south] {\huge $\Rightarrow$};
\end{tikzpicture} \hspace{0.5cm}
\begin{tikzpicture}[domain=0:4,scale=0.7]
  \draw (3,1) -- (5,1);
  \draw (5,-1) -- (3,-1);
  \draw (3,2) -- (3,-2);
  \draw (5,-2) -- (5,2);
  \draw (3,2) -- (5,2);
  \draw (3,-2) -- (5,-2);
  \draw (4,-1) -- (4,1);

  \filldraw[fill=white!40,draw=white!80] (6.8,-0.35) circle (3pt)    node[anchor=south] {\huge $\Rightarrow$};
\end{tikzpicture} \hspace{0.5cm}
\begin{tikzpicture}[domain=0:4, scale=0.7]
  \draw (6,-1) -- (6,1);
  \draw (2,-1) -- (2,1);
  \draw (2,1) -- (6,1);
  \draw (6,-1) -- (2,-1);
  \draw (3,2) -- (3,-2);
  \draw (5,-2) -- (5,2);
  \draw (3,2) -- (5,2);
  \draw (3,-2) -- (5,-2);
  \draw (4,-1) -- (4,1);
\end{tikzpicture}
\end{center}

Therefore, we have the following routes for each perfect matching.
\begin{center}
\begin{tikzpicture}[y=.3cm, x=.3cm,font=\normalsize, scale=0.7]
\foreach \i in {0,1} \draw [gray] (4*\i,0) -- (4*\i,4);
\foreach \i in {0,1} \draw [gray] (4*\i,4) -- (4*\i,8);
\foreach \i in {0,1} \draw [gray] (4*\i,8) -- (4*\i,12);
\foreach \i in {0,...,3} \draw [gray] (0,4*\i) -- (4,4*\i);
\foreach \i in {0,...,3} \draw [red, ultra thick] (0,4*\i) -- (4,4*\i);
\foreach \i in {0,...,3} \filldraw[fill=white!40,draw=black!80] (0,4*\i) circle (3pt)    node[anchor=north] {\small };
\foreach \i in {0,...,3} \filldraw[fill=white!40,draw=black!80] (4,4*\i) circle (3pt)    node[anchor=north] {\small };
\filldraw[fill=white!40,draw=white!80] (10,5) circle (3pt)    node[anchor=south] {\huge $\Rightarrow$};
\end{tikzpicture} \hspace{0.5cm}
\begin{tikzpicture}[domain=0:4, scale=0.7]
  \draw (6,-1) -- (6,1);
  \draw (2,-1) -- (2,1);
  \draw (2,1) -- (3,1);
  \draw (3,0) -- (5,0);
  \draw (5,1) -- (6,1);
  \draw (6,-1) -- (5,-1);
  \draw (3,-1) -- (2,-1);
  \draw (3,2) -- (3,-2);
  \draw (5,-2) -- (5,2);
  \draw (3,2) -- (5,2);
  \draw (3,-2) -- (5,-2);
  \draw (3,-1) -- (5,-1);
  \draw (3,1) -- (5,1);

  \draw[fill,->, >=latex][red] (3,0.5) -- (4.9,0.5); 
  \draw[fill,->, >=latex][red] (3,-1.5) -- (4.9,-1.5); 
  
  \draw[fill,->, >=latex][red] (2,-0.5) -- (2.9,0.4); 
  \draw[fill,->, >=latex][red] (5,0.5) -- (5.9,-0.4);

  \draw[fill,->, >=latex][red] (1,-1.5) -- (1.9,-0.6); 
  \draw[fill,->, >=latex][red] (6,-0.5) -- (6.9,-1.4);

  \filldraw[fill=black!100,draw=black!80] (2,-0.5) circle (2.5pt)    node[anchor=north] {\small };
  \filldraw[fill=black!100,draw=black!80] (1,-1.5) circle (2.5pt)    node[anchor=north] {\small };
  \filldraw[fill=black!100,draw=black!80] (6,-0.5) circle (2.5pt)    node[anchor=north] {\small };
  \filldraw[fill=black!100,draw=black!80] (7,-1.5) circle (2.5pt)    node[anchor=north] {\small };
  
  \filldraw[fill=black!100,draw=black!80] (4,1.5) circle (2.5pt)    node[anchor=north] {\small };
  \filldraw[fill=black!100,draw=black!80] (3,0.5) circle (2.5pt)    node[anchor=north] {\small };
  \filldraw[fill=black!100,draw=black!80] (5,0.5) circle (2.5pt)    node[anchor=north] {\small };
  \filldraw[fill=black!100,draw=black!80] (4,-0.5) circle (2.5pt)    node[anchor=north] {\small };
  \filldraw[fill=black!100,draw=black!80] (3,-1.5) circle (2.5pt)    node[anchor=north] {\small };
  \filldraw[fill=black!100,draw=black!80] (5,-1.5) circle (2.5pt)    node[anchor=north] {\small };
\end{tikzpicture}
\end{center}
\vspace{0.5cm}
\begin{center}
\begin{tikzpicture}[y=.3cm, x=.3cm,font=\normalsize, scale=0.7]
\foreach \i in {0,1} \draw [gray] (4*\i,0) -- (4*\i,4);
\foreach \i in {0,1} \draw [gray] (4*\i,4) -- (4*\i,8);
\foreach \i in {0,1} \draw [gray] (4*\i,8) -- (4*\i,12);
\foreach \i in {0,...,3} \draw [gray] (0,4*\i) -- (4,4*\i);
\foreach \i in {0,1} \draw [red, ultra thick] (4*\i,0) -- (4*\i,4);
\foreach \i in {2,3} \draw [red, ultra thick] (0,4*\i) -- (4,4*\i);
\foreach \i in {0,...,3} \filldraw[fill=white!40,draw=black!80] (0,4*\i) circle (3pt)    node[anchor=north] {\small };
\foreach \i in {0,...,3} \filldraw[fill=white!40,draw=black!80] (4,4*\i) circle (3pt)    node[anchor=north] {\small };

\filldraw[fill=white!40,draw=white!80] (10,5) circle (3pt)    node[anchor=south] {\huge $\Rightarrow$};
\end{tikzpicture} \hspace{0.5cm}
\begin{tikzpicture}[domain=0:4, scale=0.7]
  \draw (6,-1) -- (6,1);
  \draw (2,-1) -- (2,1);
  \draw (2,1) -- (3,1);
  \draw (3,0) -- (5,0);
  \draw (5,1) -- (6,1);
  \draw (6,-1) -- (5,-1);
  \draw (3,-1) -- (2,-1);
  \draw (3,2) -- (3,-2);
  \draw (5,-2) -- (5,2);
  \draw (3,2) -- (5,2);
  \draw (3,-2) -- (5,-2);
  \draw (4,-2) -- (4,0);
  \draw (3,1) -- (5,1);

  \draw[fill,->, >=latex][red] (3,0.5) -- (4.9,0.5); 
  \draw[fill,->, >=latex][red] (4,-0.5) -- (4.9,-1.5); 
  \draw[fill,->, >=latex][red] (3,-1.5) -- (3.9,-0.5);  
  \draw[fill,->, >=latex][red] (2,-0.5) -- (2.9,0.4); 
  \draw[fill,->, >=latex][red] (5,0.5) -- (5.9,-0.4);

  \draw[fill,->, >=latex][red] (1,-1.5) -- (1.9,-0.6); 
  \draw[fill,->, >=latex][red] (6,-0.5) -- (6.9,-1.4);

  \filldraw[fill=black!100,draw=black!80] (2,-0.5) circle (2.5pt)    node[anchor=north] {\small };
  \filldraw[fill=black!100,draw=black!80] (1,-1.5) circle (2.5pt)    node[anchor=north] {\small };
  \filldraw[fill=black!100,draw=black!80] (6,-0.5) circle (2.5pt)    node[anchor=north] {\small };
  \filldraw[fill=black!100,draw=black!80] (7,-1.5) circle (2.5pt)    node[anchor=north] {\small };
  
  \filldraw[fill=black!100,draw=black!80] (4,1.5) circle (2.5pt)    node[anchor=north] {\small };
  \filldraw[fill=black!100,draw=black!80] (3,0.5) circle (2.5pt)    node[anchor=north] {\small };
  \filldraw[fill=black!100,draw=black!80] (5,0.5) circle (2.5pt)    node[anchor=north] {\small };
  \filldraw[fill=black!100,draw=black!80] (4,-0.5) circle (2.5pt)    node[anchor=north] {\small };
  \filldraw[fill=black!100,draw=black!80] (3,-1.5) circle (2.5pt)    node[anchor=north] {\small };
  \filldraw[fill=black!100,draw=black!80] (5,-1.5) circle (2.5pt)    node[anchor=north] {\small };
\end{tikzpicture}
\end{center}
\vspace{0.5cm}
\begin{center}
\begin{tikzpicture}[y=.3cm, x=.3cm,font=\normalsize, scale=0.7]
\foreach \i in {0,1} \draw [gray] (4*\i,0) -- (4*\i,4);
\foreach \i in {0,1} \draw [gray] (4*\i,4) -- (4*\i,8);
\foreach \i in {0,1} \draw [gray] (4*\i,8) -- (4*\i,12);
\foreach \i in {0,...,3} \draw [gray] (0,4*\i) -- (4,4*\i);
\foreach \i in {0,1} \draw [red, ultra thick] (4*\i,8) -- (4*\i,12);
\foreach \i in {0,1} \draw [red, ultra thick] (0,4*\i) -- (4,4*\i);
\foreach \i in {0,...,3} \filldraw[fill=white!40,draw=black!80] (0,4*\i) circle (3pt)    node[anchor=north] {\small };
\foreach \i in {0,...,3} \filldraw[fill=white!40,draw=black!80] (4,4*\i) circle (3pt)    node[anchor=north] {\small };

\filldraw[fill=white!40,draw=white!80] (10,5) circle (3pt)    node[anchor=south] {\huge $\Rightarrow$};
\end{tikzpicture} \hspace{0.5cm}
\begin{tikzpicture}[domain=0:4, scale=0.7]
  \draw (6,-1) -- (6,1);
  \draw (2,-1) -- (2,1);
  \draw (2,1) -- (3,1);
  \draw (3,0) -- (5,0);
  \draw (5,1) -- (6,1);
  \draw (6,-1) -- (5,-1);
  \draw (3,-1) -- (2,-1);
  \draw (3,2) -- (3,-2);
  \draw (5,-2) -- (5,2);
  \draw (3,2) -- (5,2);
  \draw (3,-2) -- (5,-2);
  \draw (4,0) -- (4,2);
  \draw (3,-1) -- (5,-1);
  
  \draw[fill,->, >=latex][red] (4,1.5) -- (4.9,0.6); 
  \draw[fill,->, >=latex][red] (3,0.5) -- (3.9,1.4);
  \draw[fill,->, >=latex][red] (3,-1.5) -- (4.9,-1.5);  
  \draw[fill,->, >=latex][red] (2,-0.5) -- (2.9,0.4); 
  \draw[fill,->, >=latex][red] (5,0.5) -- (5.9,-0.4);

  \draw[fill,->, >=latex][red] (1,-1.5) -- (1.9,-0.6); 
  \draw[fill,->, >=latex][red] (6,-0.5) -- (6.9,-1.4);

  \filldraw[fill=black!100,draw=black!80] (2,-0.5) circle (2.5pt)    node[anchor=north] {\small };
  \filldraw[fill=black!100,draw=black!80] (1,-1.5) circle (2.5pt)    node[anchor=north] {\small };
  \filldraw[fill=black!100,draw=black!80] (6,-0.5) circle (2.5pt)    node[anchor=north] {\small };
  \filldraw[fill=black!100,draw=black!80] (7,-1.5) circle (2.5pt)    node[anchor=north] {\small };
    
  \filldraw[fill=black!100,draw=black!80] (4,1.5) circle (2.5pt)    node[anchor=north] {\small };
  \filldraw[fill=black!100,draw=black!80] (3,0.5) circle (2.5pt)    node[anchor=north] {\small };
  \filldraw[fill=black!100,draw=black!80] (5,0.5) circle (2.5pt)    node[anchor=north] {\small };
  \filldraw[fill=black!100,draw=black!80] (4,-0.5) circle (2.5pt)    node[anchor=north] {\small };
  \filldraw[fill=black!100,draw=black!80] (3,-1.5) circle (2.5pt)    node[anchor=north] {\small };
  \filldraw[fill=black!100,draw=black!80] (5,-1.5) circle (2.5pt)    node[anchor=north] {\small };
\end{tikzpicture}
\end{center}
\vspace{0.5cm}
\begin{center}
\begin{tikzpicture}[y=.3cm, x=.3cm,font=\normalsize, scale=0.7]
\foreach \i in {0,1} \draw [gray] (4*\i,0) -- (4*\i,4);
\foreach \i in {0,1} \draw [gray] (4*\i,4) -- (4*\i,8);
\foreach \i in {0,1} \draw [gray] (4*\i,8) -- (4*\i,12);
\foreach \i in {0,...,3} \draw [gray] (0,4*\i) -- (4,4*\i);
\foreach \i in {0,1} \draw [red, ultra thick] (4*\i,0) -- (4*\i,4);
\foreach \i in {0,1} \draw [red, ultra thick] (4*\i,8) -- (4*\i,12);
\foreach \i in {0,...,3} \filldraw[fill=white!40,draw=black!80] (0,4*\i) circle (3pt)    node[anchor=north] {\small };
\foreach \i in {0,...,3} \filldraw[fill=white!40,draw=black!80] (4,4*\i) circle (3pt)    node[anchor=north] {\small };

\filldraw[fill=white!40,draw=white!80] (10,5) circle (3pt)    node[anchor=south] {\huge $\Rightarrow$};
\end{tikzpicture} \hspace{0.5cm}
\begin{tikzpicture}[domain=0:4, scale=0.7]
  \draw (6,-1) -- (6,1);
  \draw (2,-1) -- (2,1);
  \draw (2,1) -- (3,1);
  \draw (3,0) -- (5,0);
  \draw (5,1) -- (6,1);
  \draw (6,-1) -- (5,-1);
  \draw (3,-1) -- (2,-1);
  \draw (3,2) -- (3,-2);
  \draw (5,-2) -- (5,2);
  \draw (3,2) -- (5,2);
  \draw (3,-2) -- (5,-2);
  \draw (4,-2) -- (4,2);

  \draw[fill,->, >=latex][red] (4,1.5) -- (4.9,0.6); 
  \draw[fill,->, >=latex][red] (3,0.5) -- (3.9,1.4);
  \draw[fill,->, >=latex][red] (4,-0.5) -- (4.9,-1.4); 
  \draw[fill,->, >=latex][red] (3,-1.5) -- (3.9,-0.6);  
  \draw[fill,->, >=latex][red] (2,-0.5) -- (2.9,0.4); 
  \draw[fill,->, >=latex][red] (5,0.5) -- (5.9,-0.4);

  \draw[fill,->, >=latex][red] (1,-1.5) -- (1.9,-0.6); 
  \draw[fill,->, >=latex][red] (6,-0.5) -- (6.9,-1.4);

  \filldraw[fill=black!100,draw=black!80] (2,-0.5) circle (2.5pt)    node[anchor=north] {\small };
  \filldraw[fill=black!100,draw=black!80] (1,-1.5) circle (2.5pt)    node[anchor=north] {\small };
  \filldraw[fill=black!100,draw=black!80] (6,-0.5) circle (2.5pt)    node[anchor=north] {\small };
  \filldraw[fill=black!100,draw=black!80] (7,-1.5) circle (2.5pt)    node[anchor=north] {\small };

  \filldraw[fill=black!100,draw=black!80] (4,1.5) circle (2.5pt)    node[anchor=north] {\small };
  \filldraw[fill=black!100,draw=black!80] (3,0.5) circle (2.5pt)    node[anchor=north] {\small };
  \filldraw[fill=black!100,draw=black!80] (5,0.5) circle (2.5pt)    node[anchor=north] {\small };
  \filldraw[fill=black!100,draw=black!80] (4,-0.5) circle (2.5pt)    node[anchor=north] {\small };
  \filldraw[fill=black!100,draw=black!80] (3,-1.5) circle (2.5pt)    node[anchor=north] {\small };
  \filldraw[fill=black!100,draw=black!80] (5,-1.5) circle (2.5pt)    node[anchor=north] {\small };
\end{tikzpicture}
\end{center}
\vspace{0.5cm}
\begin{center}
\begin{tikzpicture}[y=.3cm, x=.3cm,font=\normalsize, scale=0.7]
\foreach \i in {0,1} \draw [gray] (4*\i,0) -- (4*\i,4);
\foreach \i in {0,1} \draw [gray] (4*\i,4) -- (4*\i,8);
\foreach \i in {0,1} \draw [gray] (4*\i,8) -- (4*\i,12);
\foreach \i in {0,...,3} \draw [gray] (0,4*\i) -- (4,4*\i);
\foreach \i in {0,1} \draw [red, ultra thick] (4*\i,4) -- (4*\i,8);
\foreach \i in {0,3} \draw [red, ultra thick] (0,4*\i) -- (4,4*\i);
\foreach \i in {0,...,3} \filldraw[fill=white!40,draw=black!80] (0,4*\i) circle (3pt)    node[anchor=north] {\small };
\foreach \i in {0,...,3} \filldraw[fill=white!40,draw=black!80] (4,4*\i) circle (3pt)    node[anchor=north] {\small };

\filldraw[fill=white!40,draw=white!80] (10,5) circle (3pt)    node[anchor=south] {\huge $\Rightarrow$};
\end{tikzpicture} \hspace{0.5cm}
\begin{tikzpicture}[domain=0:4, scale=0.7]
  \draw (6,-1) -- (6,1);
  \draw (2,-1) -- (2,1);
  \draw (2,1) -- (6,1);
  \draw (6,-1) -- (2,-1);
  \draw (3,2) -- (3,-2);
  \draw (5,-2) -- (5,2);
  \draw (3,2) -- (5,2);
  \draw (3,-2) -- (5,-2);
  \draw (4,-1) -- (4,1);

  \draw[fill,->, >=latex][red] (4,-0.5) -- (4.9,0.4); 
  \draw[fill,->, >=latex][red] (3,0.5) -- (3.9,-0.4);
  \draw[fill,->, >=latex][red] (3,-1.5) -- (4.9,-1.5);  

  \draw[fill,->, >=latex][red] (2,-0.5) -- (2.9,0.4); 
  \draw[fill,->, >=latex][red] (5,0.5) -- (5.9,-0.4);

  \draw[fill,->, >=latex][red] (1,-1.5) -- (1.9,-0.6); 
  \draw[fill,->, >=latex][red] (6,-0.5) -- (6.9,-1.4);

  \filldraw[fill=black!100,draw=black!80] (2,-0.5) circle (2.5pt)    node[anchor=north] {\small };
  \filldraw[fill=black!100,draw=black!80] (1,-1.5) circle (2.5pt)    node[anchor=north] {\small };
  \filldraw[fill=black!100,draw=black!80] (6,-0.5) circle (2.5pt)    node[anchor=north] {\small };
  \filldraw[fill=black!100,draw=black!80] (7,-1.5) circle (2.5pt)    node[anchor=north] {\small };

  \filldraw[fill=black!100,draw=black!80] (4,1.5) circle (2.5pt)    node[anchor=north] {\small };
  \filldraw[fill=black!100,draw=black!80] (3,0.5) circle (2.5pt)    node[anchor=north] {\small };
  \filldraw[fill=black!100,draw=black!80] (5,0.5) circle (2.5pt)    node[anchor=north] {\small };
  \filldraw[fill=black!100,draw=black!80] (4,-0.5) circle (2.5pt)    node[anchor=north] {\small };
  \filldraw[fill=black!100,draw=black!80] (3,-1.5) circle (2.5pt)    node[anchor=north] {\small };
  \filldraw[fill=black!100,draw=black!80] (5,-1.5) circle (2.5pt)    node[anchor=north] {\small };
\end{tikzpicture}
\end{center}

Therefore, the acyclic-directed graph in Figure \ref{fig:Acyclic-directed graph constructed by superimposing the routes} is constructed by superimposing every one of the previously obtained routes.
\begin{figure}[ht]
\begin{center}
\begin{tikzpicture}[domain=0:4, scale=0.7]
  
  \draw[->, >=latex,red] (4,-0.5) -- (4.9,0.4); 
  \draw[->, >=latex,red] (3,0.5) -- (3.9,-0.4);
  \draw[->, >=latex,red] (3,-1.5) -- (4.9,-1.5);  
  \draw[->, >=latex,red] (3,0.5) -- (4.9,0.5); 
 
  \draw[->, >=latex,red] (1,-1.5) -- (1.9,-0.6); 
  \draw[->, >=latex,red] (2,-0.5) -- (2.9,0.4); 
  
  \draw[->, >=latex,red] (3,-1.5) -- (3.9,-0.6); 
  \draw[->, >=latex,red] (4,-0.5) -- (4.9,-1.4); 
  
  \draw[->, >=latex,red] (5,0.5) -- (5.9,-0.4);
  \draw[->, >=latex,red] (6,-0.5) -- (6.9,-1.4);
  
  \draw[->, >=latex,red] (3,0.5) -- (3.9,1.4);
  \draw[->, >=latex,red] (4,1.5) -- (4.9,0.6);
  
\node (1) at (3,-1.9) {$s_1$};
\node (2) at (1,-1.9) {$s_2$};
\node (3) at (5,-1.9) {$t_1$};
\node (4) at (7,-1.9) {$t_2$};

  \filldraw[fill=black!100,draw=black!80] (2,-0.5) circle (2.5pt)    node[anchor=north] {\small };
  \filldraw[fill=black!100,draw=black!80] (1,-1.5) circle (2.5pt)    node[anchor=north] {\small };
  \filldraw[fill=black!100,draw=black!80] (6,-0.5) circle (2.5pt)    node[anchor=north] {\small };
  \filldraw[fill=black!100,draw=black!80] (7,-1.5) circle (2.5pt)    node[anchor=north] {\small };
  \filldraw[fill=black!100,draw=black!80] (4,1.5) circle (2.5pt)    node[anchor=north] {\small };
  \filldraw[fill=black!100,draw=black!80] (3,0.5) circle (2.5pt)    node[anchor=north] {\small };
  \filldraw[fill=black!100,draw=black!80] (5,0.5) circle (2.5pt)    node[anchor=north] {\small };
  \filldraw[fill=black!100,draw=black!80] (4,-0.5) circle (2.5pt)    node[anchor=north] {\small };
  \filldraw[fill=black!100,draw=black!80] (3,-1.5) circle (2.5pt)    node[anchor=north] {\small };
  \filldraw[fill=black!100,draw=black!80] (5,-1.5) circle (2.5pt)    node[anchor=north] {\small };
\end{tikzpicture}
\end{center}
    \caption{Acyclic-directed graph constructed by superimposing $2$-routes.}
\label{fig:Acyclic-directed graph constructed by superimposing the routes}
\end{figure}
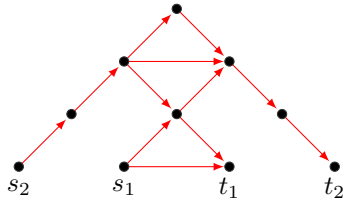
The acyclic-directed graph in Figure \ref{fig:Acyclic-directed graph constructed by superimposing the routes} allows us to add arrows as long as they do not affect the number of $2$-routes. For instance, we can add two arrows as shown in Figure \ref{fig:Catalan acyclic-directed graph} (one ending at vertex $s_1$ and another starting at vertex $t_1$). This is because $2$-routes are a collection of two vertex-disjoint paths. Since the path from $s_1$ to $t_1$ shares vertices with the new arrows, the path from $s_2$ to $t_2$ cannot intersect it.
\begin{figure}[ht]
\begin{center}
\begin{tikzpicture}[domain=0:4, scale=0.7]
  
  \draw[->, >=latex,red] (4,-0.5) -- (4.9,0.4); 
  \draw[->, >=latex,red] (3,0.5) -- (3.9,-0.4);
  \draw[->, >=latex,blue] (3,-1.5) -- (4.9,-1.5);  
  \draw[->, >=latex,blue] (3,0.5) -- (4.9,0.5); 
  \draw[->, >=latex,red] (5,-1.5) -- (5.9,-0.6);
  \draw[->, >=latex,red] (2,-0.5) -- (2.9,-1.4);
 
  \draw[->, >=latex,red] (1,-1.5) -- (1.9,-0.6); 
  \draw[->, >=latex,red] (2,-0.5) -- (2.9,0.4); 
  
  \draw[->, >=latex,red] (3,-1.5) -- (3.9,-0.6); 
  \draw[->, >=latex,red] (4,-0.5) -- (4.9,-1.4); 
  
  \draw[->, >=latex,red] (5,0.5) -- (5.9,-0.4);
  \draw[->, >=latex,red] (6,-0.5) -- (6.9,-1.4);
  
  \draw[->, >=latex,red] (3,0.5) -- (3.9,1.4);
  \draw[->, >=latex,red] (4,1.5) -- (4.9,0.6);
  
\node (1) at (3,-1.9) {$s_1$};
\node (2) at (1,-1.9) {$s_2$};
\node (3) at (5,-1.9) {$t_1$};
\node (4) at (7,-1.9) {$t_2$};

  \filldraw[fill=black!100,draw=black!80] (2,-0.5) circle (2.5pt)    node[anchor=north] {\small };
  \filldraw[fill=black!100,draw=black!80] (1,-1.5) circle (2.5pt)    node[anchor=north] {\small };
  \filldraw[fill=black!100,draw=black!80] (6,-0.5) circle (2.5pt)    node[anchor=north] {\small };
  \filldraw[fill=black!100,draw=black!80] (7,-1.5) circle (2.5pt)    node[anchor=north] {\small };
  \filldraw[fill=black!100,draw=black!80] (4,1.5) circle (2.5pt)    node[anchor=north] {\small };
  \filldraw[fill=black!100,draw=black!80] (3,0.5) circle (2.5pt)    node[anchor=north] {\small };
  \filldraw[fill=black!100,draw=black!80] (5,0.5) circle (2.5pt)    node[anchor=north] {\small };
  \filldraw[fill=black!100,draw=black!80] (4,-0.5) circle (2.5pt)    node[anchor=north] {\small };
  \filldraw[fill=black!100,draw=black!80] (3,-1.5) circle (2.5pt)    node[anchor=north] {\small };
  \filldraw[fill=black!100,draw=black!80] (5,-1.5) circle (2.5pt)    node[anchor=north] {\small };
\end{tikzpicture}
\end{center}
    \caption{Catalan acyclic-directed graph associated to $2$-routes.}
\label{fig:Catalan acyclic-directed graph}
\end{figure}
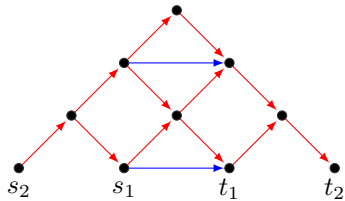

To find the entries of the path matrix, the color of the horizontal arrows was changed to blue in Figure \ref{fig:Catalan acyclic-directed graph}. If we want to see the paths from vertex $s_i$ to vertex $t_j$, then we can see that the number of paths that use a blue edge is $C_{i+j-2}$, and the number of paths that do not use those blue edges is $C_{i+j-1}$. Therefore, the path matrix is
\[ M_2 =
\left( \begin{array}{cc}
\textcolor{blue}{C_0}+ \textcolor{red}{C_1} & \textcolor{blue}{C_1}+\textcolor{red}{C_2} \\ 
\textcolor{blue}{C_1}+\textcolor{red}{C_2} & \textcolor{blue}{C_2}+\textcolor{red}{C_3} \end{array} \right)=
\left( \begin{array}{cc}
 2 & 3 \\ 
 3 & 7 \end{array} \right) \]
where it can easily be verified that $\det(M_2)=5$.
\end{example}

\begin{proposition} \label{prop:Ladder_Fibonacci}
 Let $M_{st}=(m_{ij})_{1\leq i,j \leq k}$ be the path matrix associated to the triangular snake graph of the vertical ladder graph $L_{2k-1}$, $k\geq 1$, and let $F_n$ be the $n$-th Fibonacci number. Then the following relationship holds
\[ 
\det M_{st} = F_{2k+1},
\]
where
   \begin{equation*}
     m_{ij} = \left\{
	       \begin{array}{ll}
		 F_3=2      & \mathrm{if\ } i=j=1; \\
		 F_4=3      & \mathrm{if\ } i=j\in\{2,\dots,k\} ; \\
		 F_2=1      & \mathrm{if\ } i=j+1 \quad \mathrm{or} \quad i=j-1 ; \\
		 F_0=0 & \mathrm{otherwise}.
	       \end{array}
	     \right.
   \end{equation*}
\end{proposition}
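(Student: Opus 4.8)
The plan is to prove the proposition in two stages: first pin down the entries of the path matrix $M_{st}$ by reading off the structure of the triangular snake graph $\mathcal{T}_{L_{2k-1}}$, and then evaluate the resulting determinant. For the first stage I would start from the tile-by-tile description in Remark~\ref{re:Basic_properties_T_G}. Since $L_{2k-1}$ is a vertical ladder, all of its tiles are glued along north--south edges, so the case analysis in the proof of Lemma~\ref{le:equality_of_sources_and_sinks} applies: each odd--even pair $(G_{2j-1},G_{2j})$ produces an hourglass as in Figure~\ref{fig:North-South}, while each even--odd pair produces the shared-edge configuration. Hence $\mathcal{T}_{L_{2k-1}}$ is a linear chain of hourglasses $\hourglass_1,\dots,\hourglass_{k-1}$ capped by a final up-triangle, and by Corollary~\ref{corollary:k-1 hourglass graphs} it has exactly $k$ sources $s_1,\dots,s_k$ and $k$ sinks $t_1,\dots,t_k$, consistent with a $k\times k$ matrix.

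The key structural observation is that every directed path from a source to a sink has length at most two: the only interior vertices are the necks $n_1,\dots,n_{k-1}$ (and the apex of the final triangle), each of whose outgoing arrows lands directly on a sink, while every source has in-degree zero. I would therefore enumerate the $s_i\to t_j$ paths by hand. Each source $s_i$ admits the direct horizontal arrow to $t_i$, plus up to two two-step detours, one through the neck $n_i$ immediately above it and one through the neck $n_{i-1}$ immediately below, and each detour ends at one of the two sinks adjacent to that neck. Counting these yields $m_{11}=2$ (no neck below $s_1$), $m_{ii}=3$ for $2\le i\le k$, $m_{i,i\pm1}=1$ (a single two-step path through the shared neck), and $m_{ij}=0$ for $|i-j|\ge 2$ (a path changes level by at most one). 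This establishes that $M_{st}$ is exactly the tridiagonal matrix in the statement.

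It then remains to compute $\det M_{st}$. Writing $D_k$ for this determinant and expanding along the last row gives the recurrence $D_k = 3D_{k-1}-D_{k-2}$ for $k\ge 3$, with $D_1=2$ and $D_2=5$. I would verify that the odd-indexed Fibonacci numbers satisfy the same data: the identity $F_{2k+1}=3F_{2k-1}-F_{2k-3}$ follows from two applications of $F_{m+1}=F_m+F_{m-1}$, and $F_3=2$, $F_5=5$ match the base cases, so induction gives $D_k=F_{2k+1}$. Alternatively, and more in the spirit of the paper, this evaluation is immediate from the corollary to Theorem~\ref{the:Perfect_matchings_and_Routes}: since $\det M_{st}=m(L_{2k-1})$ and the straight ladder $L_{2k-1}$ has alternating sign sequence, hence continued fraction $[1,1,\dots,1]$ with $2k$ entries, its numerator is $F_{2k+1}$ by Theorem~\ref{frac}.

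The main obstacle is not the determinant, which is routine once the matrix is known, but the bookkeeping in the first stage: one must track how sinks and sources are identified across the junctions between consecutive hourglasses (the top-left and top-right vertices of one hourglass's head coincide with the bottom-left source and bottom-right sink of the next body), and then confirm that the length-$\le 2$ enumeration is exhaustive. Getting the boundary entries right depends precisely on this identification; the absence of a neck below $s_1$ is exactly what makes the $(1,1)$ entry drop from $3$ to $2$, which is what breaks the symmetry and produces $F_{2k+1}$ rather than a pure constant-diagonal tridiagonal determinant.
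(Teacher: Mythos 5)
Your proposal is correct, and its first stage is essentially the paper's argument made explicit: the paper likewise identifies $\mathcal{T}_{L_{2k-1}}$ as a chain of $k-1$ hourglasses capped by a final triangle and reads off $m_{11}=2$, $m_{ii}=3$, $m_{i,i\pm 1}=1$, $m_{ij}=0$ ``from a direct inspection,'' whereas you supply the justification the paper omits (the only interior vertices are the necks and the final apex, every arrow out of an interior vertex lands on a sink, sources have in-degree zero, hence all source-to-sink paths have length at most two and change level by at most one). The genuine difference is in how the determinant is evaluated. The paper finishes with ``using Lemma~\ref{lem:LGV} we conclude the proposition,'' i.e., it identifies $\det M_{st}$ with the number of $k$-routes, which by Theorem~\ref{the:Perfect_matchings_and_Routes} equals $m(L_{2k-1})$, and it takes the Fibonacci count of ladder-graph matchings as known (the same mechanism underlying Proposition~\ref{prop:Catalan_Fibonacci}). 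Your primary evaluation is instead purely linear-algebraic: the tridiagonal recurrence $D_k=3D_{k-1}-D_{k-2}$ with $D_1=2$, $D_2=5$, matched against the identity $F_{2k+1}=3F_{2k-1}-F_{2k-3}$. This buys self-containedness, since it does not presuppose $m(L_{2k-1})=F_{2k+1}$ and in fact re-derives that count, at the cost of a short extra induction; your alternative finish (the corollary of Theorem~\ref{the:Perfect_matchings_and_Routes} together with the continued fraction $[1,1,\dots,1]$ with $2k$ entries and Theorem~\ref{frac}) coincides with the route the paper compresses into its final sentence.
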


\begin{proof}
Consider the vertical ladder graph $L_{2k-1}$. We obtain the following triangular snake graph $\mathcal{T}_{L_{2k-1}}$
\begin{center}
\begin{tikzpicture}[domain=0:4, scale=0.6]

  \draw[->, >=latex,red] (4,7) -- (4.9,7.9); 
  \draw[->, >=latex,red] (3,8) -- (4.9,8); 
  \draw[->, >=latex,red] (3,8) -- (3.9,7.1); 
  \draw[->, >=latex,red] (3,8) -- (3.9,8.9);
  \draw[->, >=latex,red] (4,9) -- (4.9,8.1);
  
  \draw[->, >=latex,red] (4,5) -- (4.9,5.9); 
  \draw[->, >=latex,red] (3,6) -- (4.9,6); 
  \draw[->, >=latex,red] (3,6) -- (3.9,5.1); 
  \draw[->, >=latex,red] (3,6) -- (3.9,6.9);
  \draw[->, >=latex,red] (4,7) -- (4.9,6.1);
  
  \draw[gray,dashed] (4,3.8) -- (4,4.8);
  
  \draw[->, >=latex,red] (4,1.5) -- (4.9,2.4); 
  \draw[->, >=latex,red] (3,2.5) -- (3.9,1.6);
  \draw[->, >=latex,red] (3,0.5) -- (4.9,0.5);  
  \draw[->, >=latex,red] (3,2.5) -- (4.9,2.5); 
  
  \draw[->, >=latex,red] (3,0.5) -- (3.9,1.4); 
  \draw[->, >=latex,red] (4,1.5) -- (4.9,0.6); 
  
  \draw[->, >=latex,red] (3,2.5) -- (3.9,3.4);
  \draw[->, >=latex,red] (4,3.5) -- (4.9,2.6);
  
\node (1) at (2.4,0.5) {$s_1$};
\node (2) at (2.4,2.5) {$s_2$};
\node (3) at (5.6,0.5) {$t_1$};
\node (4) at (5.6,2.5) {$t_2$};
\node (5) at (2.2,6) {$s_{k-1}$};
\node (6) at (2.4,8) {$s_k$};
\node (7) at (5.8,6) {$t_{k-1}$};
\node (8) at (5.6,8) {$t_k$};

  \filldraw[fill=black!100,draw=black!80] (4,3.5) circle (2pt)    node[anchor=north] {\small };
  \filldraw[fill=black!100,draw=black!80] (3,2.5) circle (2pt)    node[anchor=north] {\small };
  \filldraw[fill=black!100,draw=black!80] (5,2.5) circle (2pt)    node[anchor=north] {\small };
  \filldraw[fill=black!100,draw=black!80] (4,1.5) circle (2pt)    node[anchor=north] {\small };
  \filldraw[fill=black!100,draw=black!80] (3,0.5) circle (2pt)    node[anchor=north] {\small };
  \filldraw[fill=black!100,draw=black!80] (5,0.5) circle (2pt)    node[anchor=north] {\small };

  \filldraw[fill=black!100,draw=black!80] (4,9) circle (2pt)    node[anchor=north] {\small };
  \filldraw[fill=black!100,draw=black!80] (3,8) circle (2pt)    node[anchor=north] {\small };
  \filldraw[fill=black!100,draw=black!80] (5,8) circle (2pt)    node[anchor=north] {\small };
  \filldraw[fill=black!100,draw=black!80] (4,7) circle (2pt)    node[anchor=north] {\small };
  \filldraw[fill=black!100,draw=black!80] (3,6) circle (2pt)    node[anchor=north] {\small };
  \filldraw[fill=black!100,draw=black!80] (5,6) circle (2pt)    node[anchor=north] {\small };
  \filldraw[fill=black!100,draw=black!80] (4,5) circle (2pt)    node[anchor=north] {\small };
\end{tikzpicture}
\end{center}

From a direct inspection we can see that the number $m_{ii}$ of paths $p_i$ from $s_i$ to $t_i$ is equal to 3 if $i\in \{2,3,\dots, k\}$ and equal to 2 if $i=1$. In addition, the number $m_{ij}$ of paths from $s_i$ to $t_j$ is equal to 1 if $|i-j|=1$ and equal to 0 if $|i-j|>1$. Then, using Lemma~\ref{lem:LGV} we conclude the proposition. 
\end{proof}

\begin{example}
Consider the ladder graph $L_3$ of the Example~\ref{Ex:L4_Aztec_Diamond}. The triangular snake graph can be represented as follows:
\begin{center}
\begin{tikzpicture}[domain=0:4, scale=0.8]
  
  \draw[->, >=latex][gray, thick] (4,-0.5) -- (4.9,0.45); 
  \draw[->, >=latex][gray, thick] (3,0.5) -- (3.9,-0.4);
  \draw[->, >=latex][gray, thick] (3,-1.5) -- (4.9,-1.5);  
  \draw[->, >=latex][gray, thick] (3,0.5) -- (4.9,0.5); 
  
  \draw[->, >=latex][gray, thick] (3,-1.5) -- (3.9,-0.6); 
  \draw[->, >=latex][gray, thick] (4,-0.5) -- (4.9,-1.4); 
  
  \draw[->, >=latex][gray, thick] (3,0.5) -- (3.9,1.4);
  \draw[->, >=latex][gray, thick] (4,1.5) -- (4.9,0.55);
  
\node (1) at (2.7,-1.7) {$s_1$};
\node (2) at (2.7,0.3) {$s_2$};
\node (3) at (5.3,-1.7) {$t_1$};
\node (4) at (5.3,0.3) {$t_2$};

  \filldraw[fill=black!100,draw=black!80] (4,1.5) circle (2pt)    node[anchor=north] {\small };
  \filldraw[fill=black!100,draw=black!80] (3,0.5) circle (2pt)    node[anchor=north] {\small };
  \filldraw[fill=black!100,draw=black!80] (5,0.5) circle (2pt)    node[anchor=north] {\small };
  \filldraw[fill=black!100,draw=black!80] (4,-0.5) circle (2pt)    node[anchor=north] {\small };
  \filldraw[fill=black!100,draw=black!80] (3,-1.5) circle (2pt)    node[anchor=north] {\small };
  \filldraw[fill=black!100,draw=black!80] (5,-1.5) circle (2pt)    node[anchor=north] {\small };
\end{tikzpicture}
\end{center}

We obtain the following perfect matchings and their corresponding $2$-routes
\begin{center}
\begin{tikzpicture}[y=.3cm, x=.3cm,font=\normalsize,scale=0.7]
\foreach \i in {0,1} \draw [gray] (4*\i,0) -- (4*\i,4);
\foreach \i in {0,1} \draw [gray] (4*\i,4) -- (4*\i,8);
\foreach \i in {0,1} \draw [gray] (4*\i,8) -- (4*\i,12);
\foreach \i in {0,...,3} \draw [gray] (0,4*\i) -- (4,4*\i);
\foreach \i in {0,...,3} \draw [red, ultra thick] (0,4*\i) -- (4,4*\i);
\foreach \i in {0,...,3} \filldraw[fill=white!40,draw=black!80] (0,4*\i) circle (3pt)    node[anchor=north] {\small };
\foreach \i in {0,...,3} \filldraw[fill=white!40,draw=black!80] (4,4*\i) circle (3pt)    node[anchor=north] {\small };
\filldraw[fill=white!40,draw=white!80] (7,5) circle (3pt)    node[anchor=south] {\huge $\Rightarrow$};
\end{tikzpicture}
\begin{tikzpicture}[domain=0:4,scale=0.8]

  \draw (3,2) -- (5,2);
  \draw (3,4) -- (3,0);
  \draw (5,0) -- (5,4);
  \draw (3,4) -- (5,4);
  \draw (3,0) -- (5,0);
  \draw (3,1) -- (5,1);
  \draw (3,3) -- (5,3);

  \draw[fill,->, >=latex][red] (3,2.5) -- (4.9,2.5); 
  \draw[fill,->, >=latex][red] (3,0.5) -- (4.9,0.5); 
  
  \filldraw[fill=black!100,draw=black!80] (3,2.5) circle (2.5pt)    node[anchor=north] {\small };
  \filldraw[fill=black!100,draw=black!80] (5,2.5) circle (2.5pt)    node[anchor=north] {\small };
  \filldraw[fill=black!100,draw=black!80] (3,0.5) circle (2.5pt)    node[anchor=north] {\small };
  \filldraw[fill=black!100,draw=black!80] (5,0.5) circle (2.5pt)    node[anchor=north] {\small };
  \filldraw[fill=black!100,draw=black!80] (4,1.5) circle (2.5pt)    node[anchor=north] {\small };
  \filldraw[fill=black!100,draw=black!80] (4,3.5) circle (2.5pt)    node[anchor=north] {\small };  
\end{tikzpicture}
\hspace{2cm}
\begin{tikzpicture}[y=.3cm, x=.3cm,font=\normalsize,scale=0.7]
\foreach \i in {0,1} \draw [gray] (4*\i,0) -- (4*\i,4);
\foreach \i in {0,1} \draw [gray] (4*\i,4) -- (4*\i,8);
\foreach \i in {0,1} \draw [gray] (4*\i,8) -- (4*\i,12);
\foreach \i in {0,...,3} \draw [gray] (0,4*\i) -- (4,4*\i);
\foreach \i in {0,1} \draw [red, ultra thick] (4*\i,0) -- (4*\i,4);
\foreach \i in {2,3} \draw [red, ultra thick] (0,4*\i) -- (4,4*\i);
\foreach \i in {0,...,3} \filldraw[fill=white!40,draw=black!80] (0,4*\i) circle (3pt)    node[anchor=north] {\small };
\foreach \i in {0,...,3} \filldraw[fill=white!40,draw=black!80] (4,4*\i) circle (3pt)    node[anchor=north] {\small };

\filldraw[fill=white!40,draw=white!80] (7,5) circle (3pt)    node[anchor=south] {\huge $\Rightarrow$};
\end{tikzpicture}
\begin{tikzpicture}[domain=0:4,scale=0.8]
  \draw (3,2) -- (5,2);
  \draw (3,4) -- (3,0);
  \draw (5,0) -- (5,4);
  \draw (3,4) -- (5,4);
  \draw (3,0) -- (5,0);
  \draw (4,0) -- (4,2);
  \draw (3,3) -- (5,3);

  \draw[fill,->, >=latex][red] (3,2.5) -- (4.9,2.5); 
  \draw[fill,->, >=latex][red] (4,1.5) -- (4.9,0.5); 
  \draw[fill,->, >=latex][red] (3,0.5) -- (3.9,1.5);

  \filldraw[fill=black!100,draw=black!80] (4,3.5) circle (2.5pt)    node[anchor=north] {\small };
  \filldraw[fill=black!100,draw=black!80] (3,2.5) circle (2.5pt)    node[anchor=north] {\small };
  \filldraw[fill=black!100,draw=black!80] (5,2.5) circle (2.5pt)    node[anchor=north] {\small };
  \filldraw[fill=black!100,draw=black!80] (4,1.5) circle (2.5pt)    node[anchor=north] {\small };
  \filldraw[fill=black!100,draw=black!80] (3,0.5) circle (2.5pt)    node[anchor=north] {\small };
  \filldraw[fill=black!100,draw=black!80] (5,0.5) circle (2.5pt)    node[anchor=north] {\small };
\end{tikzpicture}
\end{center}
\vspace{0.5cm}
\begin{center}
\begin{tikzpicture}[y=.3cm, x=.3cm,font=\normalsize,scale=0.7]
\foreach \i in {0,1} \draw [gray] (4*\i,0) -- (4*\i,4);
\foreach \i in {0,1} \draw [gray] (4*\i,4) -- (4*\i,8);
\foreach \i in {0,1} \draw [gray] (4*\i,8) -- (4*\i,12);
\foreach \i in {0,...,3} \draw [gray] (0,4*\i) -- (4,4*\i);
\foreach \i in {0,1} \draw [red, ultra thick] (4*\i,8) -- (4*\i,12);
\foreach \i in {0,1} \draw [red, ultra thick] (0,4*\i) -- (4,4*\i);
\foreach \i in {0,...,3} \filldraw[fill=white!40,draw=black!80] (0,4*\i) circle (3pt)    node[anchor=north] {\small };
\foreach \i in {0,...,3} \filldraw[fill=white!40,draw=black!80] (4,4*\i) circle (3pt)    node[anchor=north] {\small };

\filldraw[fill=white!40,draw=white!80] (7,5) circle (3pt)    node[anchor=south] {\huge $\Rightarrow$};
\end{tikzpicture}
\begin{tikzpicture}[domain=0:4,scale=0.8]
  \draw (3,0) -- (5,0);
  \draw (3,2) -- (3,-2);
  \draw (5,-2) -- (5,2);
  \draw (3,2) -- (5,2);
  \draw (3,-2) -- (5,-2);
  \draw (4,0) -- (4,2);
  \draw (3,-1) -- (5,-1);
  
  \draw[fill,->, >=latex][red] (4,1.5) -- (4.9,0.5); 
  \draw[fill,->, >=latex][red] (3,0.5) -- (3.9,1.5);
  \draw[fill,->, >=latex][red] (3,-1.5) -- (4.9,-1.5); 

  \filldraw[fill=black!100,draw=black!80] (4,1.5) circle (2.5pt)    node[anchor=north] {\small };
  \filldraw[fill=black!100,draw=black!80] (3,0.5) circle (2.5pt)    node[anchor=north] {\small };
  \filldraw[fill=black!100,draw=black!80] (5,0.5) circle (2.5pt)    node[anchor=north] {\small };
  \filldraw[fill=black!100,draw=black!80] (4,-0.5) circle (2.5pt)    node[anchor=north] {\small };
  \filldraw[fill=black!100,draw=black!80] (3,-1.5) circle (2.5pt)    node[anchor=north] {\small };
  \filldraw[fill=black!100,draw=black!80] (5,-1.5) circle (2.5pt)    node[anchor=north] {\small };
\end{tikzpicture}
\hspace{2cm}
\begin{tikzpicture}[y=.3cm, x=.3cm,font=\normalsize,scale=0.7]
\foreach \i in {0,1} \draw [gray] (4*\i,0) -- (4*\i,4);
\foreach \i in {0,1} \draw [gray] (4*\i,4) -- (4*\i,8);
\foreach \i in {0,1} \draw [gray] (4*\i,8) -- (4*\i,12);
\foreach \i in {0,...,3} \draw [gray] (0,4*\i) -- (4,4*\i);
\foreach \i in {0,1} \draw [red, ultra thick] (4*\i,0) -- (4*\i,4);
\foreach \i in {0,1} \draw [red, ultra thick] (4*\i,8) -- (4*\i,12);
\foreach \i in {0,...,3} \filldraw[fill=white!40,draw=black!80] (0,4*\i) circle (3pt)    node[anchor=north] {\small };
\foreach \i in {0,...,3} \filldraw[fill=white!40,draw=black!80] (4,4*\i) circle (3pt)    node[anchor=north] {\small };

\filldraw[fill=white!40,draw=white!80] (7,5) circle (3pt)    node[anchor=south] {\huge $\Rightarrow$};
\end{tikzpicture}
\begin{tikzpicture}[domain=0:4,scale=0.8]
  \draw (3,0) -- (5,0);
  \draw (3,2) -- (3,-2);
  \draw (5,-2) -- (5,2);
  \draw (3,2) -- (5,2);
  \draw (3,-2) -- (5,-2);
  \draw (4,-2) -- (4,2);

  \draw[fill,->, >=latex][red] (4,1.5) -- (4.9,0.5); 
  \draw[fill,->, >=latex][red] (3,0.5) -- (3.9,1.5);
  \draw[fill,->, >=latex][red] (4,-0.5) -- (4.9,-1.5); 
  \draw[fill,->, >=latex][red] (3,-1.5) -- (3.9,-0.5);

  \filldraw[fill=black!100,draw=black!80] (4,1.5) circle (2.5pt)    node[anchor=north] {\small };
  \filldraw[fill=black!100,draw=black!80] (3,0.5) circle (2.5pt)    node[anchor=north] {\small };
  \filldraw[fill=black!100,draw=black!80] (5,0.5) circle (2.5pt)    node[anchor=north] {\small };
  \filldraw[fill=black!100,draw=black!80] (4,-0.5) circle (2.5pt)    node[anchor=north] {\small };
  \filldraw[fill=black!100,draw=black!80] (3,-1.5) circle (2.5pt)    node[anchor=north] {\small };
  \filldraw[fill=black!100,draw=black!80] (5,-1.5) circle (2.5pt)    node[anchor=north] {\small };
\end{tikzpicture}
\end{center}
\vspace{0.5cm}
\begin{center}
\begin{tikzpicture}[y=.3cm, x=.3cm,font=\normalsize,scale=0.7]
\foreach \i in {0,1} \draw [gray] (4*\i,0) -- (4*\i,4);
\foreach \i in {0,1} \draw [gray] (4*\i,4) -- (4*\i,8);
\foreach \i in {0,1} \draw [gray] (4*\i,8) -- (4*\i,12);
\foreach \i in {0,...,3} \draw [gray] (0,4*\i) -- (4,4*\i);
\foreach \i in {0,1} \draw [red, ultra thick] (4*\i,4) -- (4*\i,8);
\foreach \i in {0,3} \draw [red, ultra thick] (0,4*\i) -- (4,4*\i);
\foreach \i in {0,...,3} \filldraw[fill=white!40,draw=black!80] (0,4*\i) circle (3pt)    node[anchor=north] {\small };
\foreach \i in {0,...,3} \filldraw[fill=white!40,draw=black!80] (4,4*\i) circle (3pt)    node[anchor=north] {\small };

\filldraw[fill=white!40,draw=white!80] (7,5) circle (3pt)    node[anchor=south] {\huge $\Rightarrow$};
\end{tikzpicture}
\begin{tikzpicture}[domain=0:4,scale=0.8]
  \draw (3,1) -- (5,1);
  \draw (5,-1) -- (3,-1);
  \draw (3,2) -- (3,-2);
  \draw (5,-2) -- (5,2);
  \draw (3,2) -- (5,2);
  \draw (3,-2) -- (5,-2);
  \draw (4,-1) -- (4,1);

  \draw[fill,->, >=latex][red] (4,-0.5) -- (4.9,0.5); 
  \draw[fill,->, >=latex][red] (3,0.5) -- (3.9,-0.5);
  \draw[fill,->, >=latex][red] (3,-1.5) -- (4.9,-1.5);  

  \filldraw[fill=black!100,draw=black!80] (4,1.5) circle (2.5pt)    node[anchor=north] {\small };
  \filldraw[fill=black!100,draw=black!80] (3,0.5) circle (2.5pt)    node[anchor=north] {\small };
  \filldraw[fill=black!100,draw=black!80] (5,0.5) circle (2.5pt)    node[anchor=north] {\small };
  \filldraw[fill=black!100,draw=black!80] (4,-0.5) circle (2.5pt)    node[anchor=north] {\small };
  \filldraw[fill=black!100,draw=black!80] (3,-1.5) circle (2.5pt)    node[anchor=north] {\small };
  \filldraw[fill=black!100,draw=black!80] (5,-1.5) circle (2.5pt)    node[anchor=north] {\small };
\end{tikzpicture}
\end{center}

Then comparing Proposition~\ref{prop:Catalan_Fibonacci} with Proposition~\ref{prop:Ladder_Fibonacci} we obtain
\[ \left| \begin{array}{cc}
 C_0+C_1 & C_1+C_2 \\ 
 C_1+C_2 & C_2+C_3 \end{array} \right| = \left| \begin{array}{cc}
 2 & 3 \\ 
 3 & 7 \end{array} \right| = F_5 =
\left| \begin{array}{cc}
 2 & 1 \\ 
 1 & 3 \end{array}\right| = \left| \begin{array}{cc}
 F_3 & F_2 \\ 
 F_2 & F_4 \end{array} \right|. \]
  
\end{example}

\begin{remark}
Based on the earlier finding, we can discern that the Hankel matrix discussed in Proposition~\ref{prop:Catalan_Fibonacci} shares an identical determinant with the matrix presented in Proposition~\ref{prop:Ladder_Fibonacci}. Although both matrices exhibit similarities in terms of determinants, it is noteworthy to highlight the contrast in their components. The former matrix encompasses a total of $k^2$ non-zero entries, while the latter matrix boasts a more compact structure with only $3k-2$ non-zero entries. 

This discrepancy in the number of non-zero entries underscores a notable difference in the structures of these matrices. Exploring such differences might provide valuable insights into the underlying structures and relationships in Hankel matrices and contribute to the broader understanding of their significance in mathematical theory and applications.
\end{remark}

 \begin{proposition}\label{prop:Hankel matrix of Catalan numbers} Let $M_{st}=(m_{ij})_{1\leq i,j \leq k}$ be the path matrix associated to the triangular snake graph of the vertical ladder graph $L_{2k-2}$, where $k\geq 1$. Let $C$ be the sequence of Catalan numbers $C_n$ and let $F_n$ be the $n$-th Fibonacci number. Then the following relationship holds:
\[
\det M_{st}=F_{2k}=\det \left( H_{k}(C)+H_{k}^{\prime}(C)-E_{k,k} \right),
\]
where $E_{i,j}$ is the matrix whose $(i,j)$-th entry is $1$ and all other entries are zero, $H_{k}(C)$ and $H_{k}^{\prime}(C)$ are the Hankel matrices of the Catalan numbers and
   \begin{equation*}
     m_{ij} = \left\{
	       \begin{array}{ll}
		 F_3=2      & \mathrm{if\ } i=j=1  \quad\mathrm{or} \quad i=j=k; \\
		 F_4=3      & \mathrm{if\ } i=j\in\{2,\dots,k-1\} ; \\
		 F_2=1      & \mathrm{if\ } i=j+1 \quad \mathrm{or} \quad i=j-1 ; \\
		 F_0=0 & \mathrm{otherwise}.
	       \end{array}
	     \right.
   \end{equation*}
\end{proposition}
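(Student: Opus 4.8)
The plan is to follow the template of the proof of Proposition~\ref{prop:Ladder_Fibonacci}, adapting it to the even case $L_{2k-2}$, and then to deduce the Hankel identity from Proposition~\ref{prop:Catalan_Fibonacci} by a single multilinearity step.

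First I would exhibit the triangular snake graph $\mathcal{T}_{L_{2k-2}}$ of the vertical ladder with $2k-2$ tiles. As in Remark~\ref{re:Basic_properties_T_G}, the odd tiles $G_{2i-1}$ contribute up-triangles and the even tiles $G_{2i}$ contribute down-triangles; stacking them north-on-south produces $k$ horizontal bands, each carrying a source $s_i$ on the left and a sink $t_i$ on the right, by Corollary~\ref{corollary:k-1 hourglass graphs} and Definition~\ref{def:k-vertices}. The crucial difference with $L_{2k-1}$ is one of parity: the topmost tile of $L_{2k-2}$ is the even down-triangle $G_{2k-2}$, so above the band of $s_k,t_k$ there is no apex vertex. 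I would then read off the path counts by direct inspection: a path from $s_i$ to $t_i$ in a bulk band may go straight across, over the upper apex, or under the lower apex, giving $m_{ii}=3$; at the bottom band there is no lower apex and at the top band (because of the missing apex) there is no upper apex, so $m_{11}=m_{kk}=2$; consecutive bands are joined through a single neck vertex, giving $m_{i,i\pm 1}=1$, and there is no path between non-adjacent bands, so $m_{ij}=0$ for $|i-j|>1$. This establishes the stated tridiagonal form of $M_{st}$.

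Next I would compute $\det M_{st}$. By Lemma~\ref{lem:LGV} together with Theorem~\ref{the:Perfect_matchings_and_Routes}, $\det M_{st}$ equals the number of $k$-routes in $\mathcal{T}_{L_{2k-2}}$, hence the number $m(L_{2k-2})$ of perfect matchings of the straight snake graph with $2k-2$ tiles. Self-containedly, I would verify the value directly from the tridiagonal structure: expanding along the last row gives $\det M_{st}=2\det B_{k-1}-\det B_{k-2}$, where $B_j$ denotes the $j\times j$ matrix of Proposition~\ref{prop:Ladder_Fibonacci} (diagonal $2,3,\dots,3$, with $\det B_j=F_{2j+1}$), so that $\det M_{st}=2F_{2k-1}-F_{2k-3}=F_{2k}$ by the Fibonacci recurrence, after checking the small base cases $k=1,2$.

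Finally, for the Hankel identity, write $A=H_k(C)+H_k^{\prime}(C)$, whose $(k,k)$ entry is the only one altered by subtracting $E_{k,k}$. By linearity of the determinant in the $k$-th column, $\det(A-E_{k,k})=\det A-\det A^{(k)}$, where $A^{(k)}$ is $A$ with its $k$-th column replaced by the standard basis vector $e_k$; expanding $\det A^{(k)}$ along that column leaves the minor obtained by deleting the last row and column of $A$, which is exactly $H_{k-1}(C)+H_{k-1}^{\prime}(C)$. Proposition~\ref{prop:Catalan_Fibonacci} then gives $\det A=F_{2k+1}$ and $\det A^{(k)}=F_{2k-1}$, whence $\det(A-E_{k,k})=F_{2k+1}-F_{2k-1}=F_{2k}$, and combining yields $\det M_{st}=F_{2k}=\det\!\left(H_k(C)+H_k^{\prime}(C)-E_{k,k}\right)$. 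I expect the main obstacle to be the first step: carefully justifying the boundary counts $m_{11}=m_{kk}=2$ from the parity of the last tile, i.e.\ making the structural description of $\mathcal{T}_{L_{2k-2}}$ precise enough that the inspection of the $m_{ij}$ is rigorous rather than merely pictorial.
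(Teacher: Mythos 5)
Your proposal is correct, and it departs from the paper's argument in a genuinely useful way. The paper's proof is a terse reduction: it draws the triangular snake graph of $L_{2k-2}$ (whose only new feature is the missing apex above the top band, exactly the parity point you isolate), asserts that "an easy computation" yields the stated $M_{st}$, and then invokes "the same method" as Propositions~\ref{prop:Catalan_Fibonacci} and~\ref{prop:Ladder_Fibonacci} — that is, it implicitly re-runs the combinatorial machinery (Lemma~\ref{lem:LGV} together with the Aztec-diamond/Schr\"oder-path correspondence) on the even-length ladder to identify both determinants with $m(L_{2k-2})$. You agree with the paper on the first step (inspection of $\mathcal{T}_{L_{2k-2}}$, with the boundary counts $m_{11}=m_{kk}=2$ justified by the parity of the last tile), but you replace the second and third steps by purely algebraic reductions to results already proved: the tridiagonal expansion $\det M_{st}=2\det B_{k-1}-\det B_{k-2}=2F_{2k-1}-F_{2k-3}=F_{2k}$ reuses Proposition~\ref{prop:Ladder_Fibonacci}, and the multilinearity/cofactor identity $\det\left(A-E_{k,k}\right)=\det A-\det\left(H_{k-1}(C)+H_{k-1}^{\prime}(C)\right)=F_{2k+1}-F_{2k-1}=F_{2k}$ reuses Proposition~\ref{prop:Catalan_Fibonacci} at sizes $k$ and $k-1$. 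What this buys is rigor and economy: no new lattice-path bijection has to be constructed for the even case, and the Hankel identity becomes a two-line corollary of the odd case; what it gives up is the direct combinatorial interpretation of the entries of $H_{k}(C)+H_{k}^{\prime}(C)-E_{k,k}$ as path counts, which the paper's (implicit) route provides. One small point: your multilinearity step invokes Proposition~\ref{prop:Catalan_Fibonacci} at size $k-1$, so it requires $k\geq 2$; the degenerate case $k=1$ should be checked directly there as well (indeed $C_0+C_1-1=1=F_2$), just as you do for the tridiagonal recursion.
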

\begin{proof}
This follows by the same method as in the proofs of Propositions~\ref{prop:Catalan_Fibonacci} and \ref{prop:Ladder_Fibonacci}, considering the ladder graph $L_{2k-2}$. The following is the triangular snake graph associated to $L_{2k-2}$.
\begin{center}
\begin{tikzpicture}[domain=0:4, scale=0.6]

  \draw[->, >=latex,red] (4,7) -- (4.9,7.9); 
  \draw[->, >=latex,red] (3,8) -- (4.9,8); 
  \draw[->, >=latex,red] (3,8) -- (3.9,7.1); 
  
  \draw[->, >=latex,red] (4,5) -- (4.9,5.9); 
  \draw[->, >=latex,red] (3,6) -- (4.9,6); 
  \draw[->, >=latex,red] (3,6) -- (3.9,5.1); 
  \draw[->, >=latex,red] (3,6) -- (3.9,6.9);
  \draw[->, >=latex,red] (4,7) -- (4.9,6.1);
  
  \draw[gray,dashed] (4,3.8) -- (4,4.8);
  
  \draw[->, >=latex,red] (4,1.5) -- (4.9,2.4); 
  \draw[->, >=latex,red] (3,2.5) -- (3.9,1.6);
  \draw[->, >=latex,red] (3,0.5) -- (4.9,0.5);  
  \draw[->, >=latex,red] (3,2.5) -- (4.9,2.5); 
  
  \draw[->, >=latex,red] (3,0.5) -- (3.9,1.4); 
  \draw[->, >=latex,red] (4,1.5) -- (4.9,0.6); 
  
  \draw[->, >=latex,red] (3,2.5) -- (3.9,3.4);
  \draw[->, >=latex,red] (4,3.5) -- (4.9,2.6);
  
\node (1) at (2.6,0.5) {$s_1$};
\node (2) at (2.6,2.5) {$s_2$};
\node (3) at (5.4,0.5) {$t_1$};
\node (4) at (5.4,2.5) {$t_2$};
\node (5) at (2.2,6) {$s_{k-1}$};
\node (6) at (2.5,8) {$s_{k}$};
\node (7) at (5.8,6) {$t_{k-1}$};
\node (8) at (5.5,8) {$t_{k}$};

  \filldraw[fill=black!100,draw=black!80] (4,3.5) circle (2pt)    node[anchor=north] {\small };
  \filldraw[fill=black!100,draw=black!80] (3,2.5) circle (2pt)    node[anchor=north] {\small };
  \filldraw[fill=black!100,draw=black!80] (5,2.5) circle (2pt)    node[anchor=north] {\small };
  \filldraw[fill=black!100,draw=black!80] (4,1.5) circle (2pt)    node[anchor=north] {\small };
  \filldraw[fill=black!100,draw=black!80] (3,0.5) circle (2pt)    node[anchor=north] {\small };
  \filldraw[fill=black!100,draw=black!80] (5,0.5) circle (2pt)    node[anchor=north] {\small };

  \filldraw[fill=black!100,draw=black!80] (3,8) circle (2pt)    node[anchor=north] {\small };
  \filldraw[fill=black!100,draw=black!80] (5,8) circle (2pt)    node[anchor=north] {\small };
  \filldraw[fill=black!100,draw=black!80] (4,7) circle (2pt)    node[anchor=north] {\small };
  \filldraw[fill=black!100,draw=black!80] (3,6) circle (2pt)    node[anchor=north] {\small };
  \filldraw[fill=black!100,draw=black!80] (5,6) circle (2pt)    node[anchor=north] {\small };
  \filldraw[fill=black!100,draw=black!80] (4,5) circle (2pt)    node[anchor=north] {\small };
\end{tikzpicture}
\end{center}

An easy computation shows that the path matrix of the previous triangular snake graph is the desired one.
\end{proof}

\subsection{Some identities in not straight snake graphs}\label{sec:Some identities in not straight snake graphs}
Building on our previous results regarding Fibonacci numbers and ladder graphs, this chapter extends our analysis to more general snake graphs. We introduce a new notation to define a snake graph in terms of the length of each maximal straight subgraph within this graph, allowing us to study their properties and relationships with Fibonacci numbers.

\begin{definition}\label{def:chain}
    Given a snake graph $\mathcal{G}$, a \textit{chain} $\mathcal{G}_{ij}$ is a subgraph of $\mathcal{G}$ that forms a maximal ladder graph. That is, there are no other ladder graphs within $\mathcal{G}$ that contain $\mathcal{G}_{ij}$, and $\mathcal{G}_{ij}$ consists of the tiles $G_k$, where $i\leq k \leq j$. The \textit{length} $l(\mathcal{G}_{ij})$ of a chain $\mathcal{G}_{ij}$ is defined as the number of tiles it contains, \ie, $l(\mathcal{G}_{ij})=j-i+1$.
\end{definition}
Based on Definition~\ref{def:chain}, we can describe a snake graph using its horizontal and vertical chains. We denoted by $\mathcal{G}_h(l_1,l_2,\dots,l_k)$ the snake graph in which $l_i$ represents the length of the horizontal chains when $i$ is odd, and the length of the vertical chains when $i$ is even. Conversely, $\mathcal{G}_v(l_1,l_2,\dots,l_k)$ denotes the snake graph where $l_i$ represents the length of the vertical chains if $i$ is odd, and the length of the horizontal chains if  $i$ is even. The concept of a chain can be analogously defined for triangular snake graphs.

\begin{remark}
The only snake graph that contains a chain of length $1$ is the one formed by a single tile, denoted as \(\mathcal{G} = G_1 = \mathcal{G}_h(1) = \mathcal{G}_v(1)\). In all other cases, the chains have a length of at least $2$. However, in some computations (see Lemma~\ref{lemma:sum-prod-Fibonacci}), initial chains of length $0$ or $1$ may appear. By convention, we define \(\mathcal{G}_h(0, l_2, \dots, l_k) = \mathcal{G}_v(l_2 - 1, l_3, \dots, l_k)\) and \(\mathcal{G}_h(1, l_2, \dots, l_k) = \mathcal{G}_v(l_2, l_3, \dots, l_k)\). Similarly, we set \(\mathcal{G}_v(0, l_2, \dots, l_k) = \mathcal{G}_h(l_2 - 1, l_3, \dots, l_k)\) and \(\mathcal{G}_v(1, l_2, \dots, l_k) = \mathcal{G}_h(l_2, l_3, \dots, l_k)\).
\end{remark}
\begin{example} The snake graph $\mathcal{G}_h(2,2,4,3,2,2)$ is illustrated below:
\begin{center}
\begin{tikzpicture}[y=0.3cm, x=0.3cm,font=\normalsize, scale=0.5]

\draw [gray] (25,0) -- (35,0);
\draw [gray] (30,10) -- (35,10);
\draw [gray] (25,5) -- (35,5);
\draw [gray] (35,0) -- (35,10);
\draw [gray] (30,-10) -- (30,10);
\draw [gray] (25,-10) -- (25,5);
\draw [gray] (10,-5) -- (30,-5);
\draw [gray] (5,-10) -- (30,-10);
\draw [gray] (20,-5) -- (20,-10);
\draw [gray] (15,-5) -- (15,-15);
\draw [gray] (10,-5) -- (10,-15);
\draw [gray] (5,-10) -- (5,-15);
\draw [gray] (5,-15) -- (15,-15);

    \filldraw (7.5,-12.5) node[anchor=center] {\small $G_1$};
    \filldraw (12.5,-12.5) node[anchor=center] {\small $G_2$};
    \filldraw (12.5,-7.5) node[anchor=center] {\small $G_3$};   
    \filldraw (17.5,-7.5) node[anchor=center] {\small $G_4$};
    \filldraw (22.5,-7.5) node[anchor=center] {\small $G_5$};
    \filldraw (27.5,-7.5) node[anchor=center] {\small $G_6$};
    \filldraw (27.5,-2.5) node[anchor=center] {\small $G_7$};
    \filldraw (27.5,2.5) node[anchor=center] {\small $G_8$};
    \filldraw (32.5,2.5) node[anchor=center] {\small $G_9$};
    \filldraw (32.5,7.5) node[anchor=center] {\small $G_{10}$};
\end{tikzpicture}
\end{center}
This snake graph consists of $3$ horizontal chains:
\begin{align*}
\mathcal{G}_{1,2}&=\{G_1,G_2\}, \\ \mathcal{G}_{3,6}&=\{G_3,G_4,G_5,G_6\}, \\ \mathcal{G}_{8,9}&=\{G_8,G_9\},
\end{align*}
and $3$ vertical chains:
\begin{align*}
    \mathcal{G}_{2,3}&=\{G_2,G_3\}, \\ \mathcal{G}_{6,8}&=\{G_6,G_7,G_8\}, \\
    \mathcal{G}_{9,10}&=\{G_9,G_{10}\}.
\end{align*}
\end{example}

This chain description allows for recursive combinatorial counting using Fibonacci numbers, as illustrated in the following lemma.

\begin{lemma}\label{lemma:sum-prod-Fibonacci}
Let $\mathcal{G}$ be a snake graph represented by $\mathcal{G}_h(l_1,l_2,\dots,l_k)$, where $l_i\geq 2$ for all $i\in \{1,\dots,k\}$. The number of perfect matchings $m(\mathcal{G})$ of $\mathcal{G}$ is given by the recurrence relation:
\[
m(\mathcal{G})=F_{l_1}\ m(\mathcal{G}_v(l_2-1,l_3,\dots,l_k))+F_{l_1+1}\ m(\mathcal{G}_v(l_2-2,l_3,\dots,l_k)).
\]
\end{lemma}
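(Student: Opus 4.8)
The plan is to prove the recurrence by a direct combinatorial decomposition of $\text{Match}(\mathcal{G})$, conditioning on how a perfect matching behaves at the corner where the first horizontal chain meets the first vertical chain. Throughout I use the single fact that a straight snake graph (ladder) satisfies $m(L_n)=F_{n+2}$; this is immediate from Theorem~\ref{frac} (the numerator of the continued fraction of a ladder is a Fibonacci number) and matches the $2\times n$ domino count recalled in the introduction. Write $\mathcal{G}=\mathcal{G}_h(l_1,\dots,l_k)$ with $k\ge 2$, and let $R$ be the subgraph formed by the tiles $G_1,\dots,G_{l_1}$ of the first (horizontal) chain, so $R\cong L_{l_1}$ is a $2\times(l_1+1)$ ladder. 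The corner tile $G_{l_1}$ turns upward, and the first vertical tile strictly above it is $G_{l_1+1}$. Let $u,v$ be the two vertices shared by $R$ and $G_{l_1+1}$ (the endpoints of the interior edge $\eta$ that is the north edge of $G_{l_1}$ and the south edge of $G_{l_1+1}$), and let $\mathcal{G}''$ be the subgraph on tiles $G_{l_1+1},\dots,G_d$. By inspection $\mathcal{G}''=\mathcal{G}_v(l_2-1,l_3,\dots,l_k)$, and deleting its bottom tile (equivalently deleting $u,v$) yields $\mathcal{G}_v(l_2-2,l_3,\dots,l_k)$; the degenerate values $l_2-1,l_2-2\in\{0,1\}$ are absorbed by the conventions fixed in the remark preceding the lemma.

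First I would classify each $P\in\text{Match}(\mathcal{G})$ by how it matches $u$ and $v$. Every vertex of $R$ other than $u,v$ has all its neighbors inside $R$ and so is matched internally; likewise every vertex of $\mathcal{G}''$ other than $u,v$ is matched inside $\mathcal{G}''$. The key observation is a parity argument: since $R$ has $2(l_1+1)$ vertices, a matching cannot leave exactly one of $u,v$ to be matched into $\mathcal{G}''$, so the ``mixed'' configurations are impossible and only three states occur: (A) $\eta\in P$; (B) $u,v$ are both matched inside $R$ but not to each other; (E) $u,v$ are both matched upward into $\mathcal{G}''$. In states (A) and (B) the vertices $u,v$ are consumed by $R$, so $\mathcal{G}''$ must match $\mathcal{G}''\setminus\{u,v\}$, giving $m(\mathcal{G}_v(l_2-2,\dots))$ completions, while the $R$-side counts in (A) and (B) combine to \emph{all} perfect matchings of $R$, contributing the factor $m(R)=m(L_{l_1})=F_{l_1+2}$. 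In state (E) the $R$-side must match $R\setminus\{u,v\}$, whose count I denote $Q$, and the $\mathcal{G}''$-side ranges over matchings of $\mathcal{G}''$ with $\eta\notin P$; since within $\mathcal{G}''$ the vertices $u,v$ are either joined by $\eta$ or both sent upward, there are exactly $m(\mathcal{G}_v(l_2-1,\dots))-m(\mathcal{G}_v(l_2-2,\dots))$ of these.

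Summing the three states gives $m(\mathcal{G})=F_{l_1+2}\,m(\mathcal{G}_v(l_2-2,\dots))+Q\bigl(m(\mathcal{G}_v(l_2-1,\dots))-m(\mathcal{G}_v(l_2-2,\dots))\bigr)$, which rearranges to $Q\,m(\mathcal{G}_v(l_2-1,\dots))+(F_{l_1+2}-Q)\,m(\mathcal{G}_v(l_2-2,\dots))$. To finish I would evaluate $Q$: deleting $u=a_{l_1-1}$ and $v=a_{l_1}$ from the ladder $R$ leaves the bottom-right corner with a unique neighbour, forcing one rung and collapsing the last column, so $Q=m(L_{l_1-2})=F_{l_1}$ (the case $l_1=1$ checked directly as $Q=1=F_1$). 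The recurrence then follows from the Fibonacci identity $F_{l_1+2}-F_{l_1}=F_{l_1+1}$, which produces exactly the claimed coefficients $F_{l_1}$ and $F_{l_1+1}$.

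The main obstacle is the interface case analysis rather than any hard computation: one must verify rigorously that the mixed states are excluded by parity, that summing states (A) and (B) reconstitutes all of $m(L_{l_1})$ so that the two surviving terms carry precisely the factors $m(\mathcal{G}_v(l_2-1,\dots))$ and $m(\mathcal{G}_v(l_2-2,\dots))$, and that the identifications of $\mathcal{G}''$ and $\mathcal{G}''\setminus\{u,v\}$ with $\mathcal{G}_v(l_2-1,\dots)$ and $\mathcal{G}_v(l_2-2,\dots)$ remain correct under the degenerate-length conventions. Once these are settled, the Fibonacci bookkeeping ($m(R)=F_{l_1+2}$, $Q=F_{l_1}$, and $F_{l_1+2}-F_{l_1}=F_{l_1+1}$) is routine.
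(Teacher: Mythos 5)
Your proof is correct, and it follows the same broad strategy as the paper---a local case analysis at the junction of the first horizontal chain and the first vertical chain, reducing each piece to ladder counts $m(L_n)=F_{n+2}$---but the actual decomposition is genuinely different. The paper conditions on the single edge covering the lower-right vertex of the corner tile $G_{l_1}$: that vertex is covered either by the east (vertical) edge of $G_{l_1}$, which forces a perfect matching of $L_{l_1-1}$ on the left and of $\mathcal{G}_v(l_2-2,l_3,\dots,l_k)$ above, or by the south (horizontal) edge, which forces $L_{l_1-2}$ on the left and $\mathcal{G}_v(l_2-1,l_3,\dots,l_k)$ above; the two cases produce the two terms of the recurrence immediately, with the coefficients $F_{l_1+1}$ and $F_{l_1}$ appearing directly as ladder counts and no further algebra. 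You instead condition on the matching state of the endpoints $u,v$ of the interior edge $\eta$ between $G_{l_1}$ and $G_{l_1+1}$, which yields three states rather than two, and you must then recombine: merge states (A) and (B) into $m(L_{l_1})=F_{l_1+2}$, count the $\eta$-avoiding matchings of $\mathcal{G}''$ by inclusion--exclusion, evaluate $Q=m(R\setminus\{u,v\})=F_{l_1}$ via the forced corner edge, and finish with the identity $F_{l_1+2}-F_{l_1}=F_{l_1+1}$. What your route buys is explicitness: the parity argument excluding mixed configurations, which the paper's proof uses silently (it is exactly what guarantees that the "left part" and "remaining part" factor cleanly in each of its two cases), is stated and justified in your version, and the split on "contains $\eta$ / avoids $\eta$" is a standard, robust template. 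What the paper's route buys is economy: by conditioning on a vertex whose two candidate incident edges correspond bijectively to the two terms of the recurrence, it avoids the auxiliary quantity $Q$, the inclusion--exclusion step, and the final Fibonacci identity.
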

\begin{proof}
We classify all perfect matchings of the snake graph $\mathcal{G} = \mathcal{G}_h(l_1,l_2,\dots,l_k)$ by considering the edge incident to the lower right vertex of the tile $G_{l_1}$. This edge can either be a vertical or an horizontal edge, as illustrated in the following figure:
\begin{center}
    \begin{tikzpicture}[scale=1]

\filldraw(-2.5,0.5) node[anchor=center] {$G_{1}$};

\draw[gray] (-1,0) -- (-1,1);
\draw[gray] (-2,0) -- (-2,1);
\draw[blue, ultra thick] (0,0) -- (0,1);
\draw[blue, ultra thick] (0,1) -- (-1,1);
\draw[blue, ultra thick] (0,0) -- (-1,0);

\draw[blue, ultra thick, dashed] (-1,1) -- (-2,1);
\draw[blue, ultra thick,dashed] (-1,0) -- (-2,0);

\draw[blue, ultra thick] (-2,1) -- (-3,1);
\draw[blue, ultra thick] (-2,0) -- (-3,0);

\draw[blue, ultra thick] (-3,0) -- (-3,1);

\draw[gray] (0,0) -- (0,1);
\draw[red, ultra thick] (1,0) -- (1,1);
\draw[gray] (1,1) -- (0,1);
\draw[gray] (0,0) -- (1,0);


\draw[gray] (0,1) -- (0,2);
\draw[gray] (1,1) -- (1,2);
\draw[gray] (1,2) -- (0,2);
\draw[gray] (0,1) -- (1,1);


\draw[gray] (-1,1) -- (0,1);
\draw[gray] (-1,1) -- (-1,0);
\draw[gray] (-1,0) -- (0,0);

\filldraw(-0.5,0.5) node[anchor=center] {$G_{l_1-1}$};
\filldraw(0.5,0.5) node[anchor=center] {$G_{l_1}$};
\filldraw(0.5,1.5) node[anchor=center] {$G_{l_1+1}$};

\draw[dashed, gray] (2,1) -- (1,1);
\draw[dashed, gray] (0,3) -- (0,2);

\draw[dashed, gray] (-1.75,0.5) -- (-1.25,0.5);
\draw[dashed, gray] (2.25,2.75) -- (1,2);

\filldraw[fill=white!40,draw=black!80] (-3,0) circle (2pt);
\filldraw[fill=white!40,draw=black!80] (-3,1) circle (2pt);

\filldraw[fill=white!40,draw=black!80] (-2,0) circle (2pt);
\filldraw[fill=white!40,draw=black!80] (-2,1) circle (2pt);

\filldraw[fill=white!40,draw=black!80] (-1,0) circle (2pt);
\filldraw[fill=white!40,draw=black!80] (-1,1) circle (2pt);

\filldraw[fill=white!40,draw=black!80] (0,0) circle (2pt);
\filldraw[fill=white!40,draw=black!80] (1,0) circle (2pt);
\filldraw[fill=white!40,draw=black!80] (0,1) circle (2pt);
\filldraw[fill=white!40,draw=black!80] (1,1) circle (2pt);
\filldraw[fill=white!40,draw=black!80] (0,2) circle (2pt);
\filldraw[fill=white!40,draw=black!80] (1,2) circle (2pt);
\end{tikzpicture} \hspace{0.5 cm} \begin{tikzpicture}[scale=1]

\filldraw(-1.5,0.5) node[anchor=center] {$G_{l_1-2}$};
\filldraw(-3.5,0.5) node[anchor=center] {$G_{1}$};

\draw[gray] (-2,0) -- (-2,1);
\draw[gray] (-3,0) -- (-3,1);
\draw[blue, ultra thick] (-1,0) -- (-1,1);
\draw[blue, ultra thick] (-1,1) -- (-2,1);
\draw[blue, ultra thick] (-1,0) -- (-2,0);

\draw[blue, ultra thick, dashed] (-2,1) -- (-3,1);
\draw[blue, ultra thick,dashed] (-2,0) -- (-3,0);

\draw[blue, ultra thick] (-3,1) -- (-4,1);
\draw[blue, ultra thick] (-3,0) -- (-4,0);

\draw[blue, ultra thick] (-4,0) -- (-4,1);

\filldraw(-0.5,0.5) node[anchor=center] {$G_{l_1-1}$};

\draw[gray] (0,0) -- (0,1);
\draw[gray] (1,0) -- (1,1);
\draw[gray] (1,1) -- (0,1);
\draw[red, ultra thick] (0,0) -- (1,0);

\filldraw(0.5,0.5) node[anchor=center] {$G_{l_1}$};

\draw[gray] (0,1) -- (0,2);
\draw[gray] (1,1) -- (1,2);
\draw[gray] (1,2) -- (0,2);
\draw[gray] (0,1) -- (1,1);

\filldraw(0.5,1.5) node[anchor=center] {$G_{l_1+1}$};

\draw[gray] (-1,1) -- (0,1);
\draw[gray] (-1,0) -- (0,0);

\draw[dashed, gray] (-2.75,0.5) -- (-2.25,0.5);
\draw[dashed, gray] (2,1) -- (1,1);
\draw[dashed, gray] (0,3) -- (0,2);

\draw[dashed, gray] (2.25,2.75) -- (1,2);

\filldraw[fill=white!40,draw=black!80] (-4,0) circle (2pt);
\filldraw[fill=white!40,draw=black!80] (-4,1) circle (2pt);

\filldraw[fill=white!40,draw=black!80] (-3,0) circle (2pt);
\filldraw[fill=white!40,draw=black!80] (-3,1) circle (2pt);

\filldraw[fill=white!40,draw=black!80] (-2,0) circle (2pt);
\filldraw[fill=white!40,draw=black!80] (-2,1) circle (2pt);

\filldraw[fill=white!40,draw=black!80] (-1,0) circle (2pt);
\filldraw[fill=white!40,draw=black!80] (-1,1) circle (2pt);

\filldraw[fill=white!40,draw=black!80] (0,0) circle (2pt);
\filldraw[fill=white!40,draw=black!80] (1,0) circle (2pt);
\filldraw[fill=white!40,draw=black!80] (0,1) circle (2pt);
\filldraw[fill=white!40,draw=black!80] (1,1) circle (2pt);
\filldraw[fill=white!40,draw=black!80] (0,2) circle (2pt);
\filldraw[fill=white!40,draw=black!80] (1,2) circle (2pt);
\end{tikzpicture}
\end{center}

If a perfect matching contains the vertical edge, then the edges on the left part of the snake graph are chosen in $m(L_{l_1-1}) = F_{l_1+1}$ ways, and the edges of the remaining part of the snake graph are chosen in $m(\mathcal{G}_v(l_2-2,l_3,\dots,l_k))$ ways. Analogously, for a perfect matching that contains the horizontal edge, the edges on the left part of the snake graph are chosen in $m(L_{l_1-2}) = F_{l_1}$ ways, and the edges of the remaining part of the snake graph are chosen in $m(\mathcal{G}_v(l_2-1,l_3,\dots,l_k))$ ways. Therefore, the total number of perfect matchings $m(\mathcal{G})$ is given by the recurrence:
    \[
    m(\mathcal{G}) = F_{l_1}\ m(\mathcal{G}_v(l_2-1,l_3,\dots,l_k)) + F_{l_1+1}\ m(\mathcal{G}_v(l_2-2,l_3,\dots,l_k)),
    \]
    as desired.
\end{proof}

\begin{remark}
As a preliminary observation, we note that a recurrence for $m(\mathcal{G}_v(l_1,\dots,l_k))$ can be derived in a manner analogous to the one presented in Lemma~\ref{lemma:sum-prod-Fibonacci}. This is because $\mathcal{G}_v(l_1,l_2,\dots,l_k)$ is a reflection of $\mathcal{G}_h(l_1,l_2,\dots,l_k)$, implying that both graphs are structurally identical, just mirrored. Consequently, we have \[m(\mathcal{G}_v(l_1,l_2,\dots,l_k)) = m(\mathcal{G}_h(l_1,l_2,\dots,l_k)).\] Additionally, for any snake graph, the number of perfect matchings can be expressed as a sum of products of Fibonacci numbers. In \S~\ref{sec:ladder_graphs_hankel_and_general_formula}, we observed that the entries of the path matrix for the triangular snake graph associated with a ladder graph are Fibonacci numbers. This leads us to inquire whether it is possible to express the entries of the path matrices for general snake graphs in terms of Fibonacci-like expressions.
\end{remark}

\begin{proposition}\label{prop:General Fibonacci path matrix} Let $M_{st}=(m_{ij})_{1\leq i,j \leq k}$ be the path matrix associated to the following triangular snake graph:
\begin{center}
\begin{tikzpicture}[scale=0.6]

   \node (1) at (-2.5,1) {$s_1$};
   \node (1) at (-0.5,2.8) {$s_2$};
   \node (1) at (3,3.8) {$s_3$};
   \node (1) at (9.5,5.8) {$s_k$};
   \node (1) at (2.5,0.2) {$t_1$};
   \node (1) at (6,1.2) {$t_2$};
   \node (1) at (9.5,2.2) {$t_3$};
   \node (1) at (15,5) {$t_k$};


\draw[gray, fill, ->, >=latex] (0,0.5) -- (0.9,1.4);
\draw[gray, fill, ->, >=latex] (1,1.5) -- (1.9,0.6);
\draw[gray, fill, ->, >=latex] (0,0.5) -- (1.9,0.5);

\draw[gray, fill, ->, >=latex] (1,1.5) -- (1.9,2.4);
\draw[gray, fill, ->, >=latex] (0,2.5) -- (0.9,1.6);
\draw[gray, fill, ->, >=latex] (0,2.5) -- (1.9,2.5);




\draw[dashed, gray] (-2,1.5) -- (1,1.5);
\draw[dashed, gray] (-2,0.5) -- (0,0.5);
\draw[dashed, gray] (-2,0.5) -- (-2,1.5);

\foreach \x in {0, 2} 
\filldraw[fill=black!100,draw=black!80] (\x,0.5) circle (2.5pt)    node[anchor=north] {\small};

\foreach \x in {1}
\filldraw[fill=black!100,draw=black!80] (\x,1.5) circle (2.5pt)    node[anchor=north] {\small };

\foreach \x in {0, 2} 
\filldraw[fill=black!100,draw=black!80] (\x,2.5) circle (2.5pt)    node[anchor=north] {\small};


\draw[gray, fill, ->, >=latex] (3.5,1.5) -- (4.4,2.4);
\draw[gray, fill, ->, >=latex] (4.5,2.5) -- (5.4,1.6);
\draw[gray, fill, ->, >=latex] (3.5,1.5) -- (5.4,1.5);

\draw[gray, fill, ->, >=latex] (4.5,2.5) -- (5.4,3.4);
\draw[gray, fill, ->, >=latex] (3.5,3.5) -- (4.4,2.6);
\draw[gray, fill, ->, >=latex] (3.5,3.5) -- (5.4,3.5);


\draw[dashed, gray] (2,2.5) -- (4.5,2.5);
\draw[dashed, gray] (1,1.5) -- (3.5,1.5);

\draw[dashed, gray] (5.5,3.5) -- (8,3.5);
\draw[dashed, gray] (4.5,2.5) -- (7,2.5);

\foreach \x in {3.5, 5.5} 
\filldraw[fill=black!100,draw=black!80] (\x,1.5) circle (2.5pt)    node[anchor=north] {\small};

\foreach \x in {4.5}
\filldraw[fill=black!100,draw=black!80] (\x,2.5) circle (2.5pt)    node[anchor=north] {\small };

\foreach \x in {3.5, 5.5} 
\filldraw[fill=black!100,draw=black!80] (\x,3.5) circle (2.5pt)    node[anchor=north] {\small};

\draw[gray, fill, ->, >=latex] (7,2.5) -- (7.9,3.4);
\draw[gray, fill, ->, >=latex] (8,3.5) -- (8.9,2.6);
\draw[gray, fill, ->, >=latex] (7,2.5) -- (8.9,2.5);

\foreach \x in {7, 9} 
\filldraw[fill=black!100,draw=black!80] (\x,2.5) circle (2.5pt)    node[anchor=north] {\small};

\foreach \x in {8}
\filldraw[fill=black!100,draw=black!80] (\x,3.5) circle (2.5pt)    node[anchor=north] {\small };

\draw[dashed, gray] (8,3.5) -- (11,4.5);

\draw[gray, fill, ->, >=latex] (11,4.5) -- (11.9,5.4);
\draw[gray, fill, ->, >=latex] (10,5.5) -- (10.9,4.6);
\draw[gray, fill, ->, >=latex] (10,5.5) -- (11.9,5.5);


\draw[dashed, gray] (12,5.5) -- (14.5,5.5);
\draw[dashed, gray] (11,4.5) -- (14.5,4.5);
\draw[dashed, gray] (14.5,4.5) -- (14.5,5.5);


\foreach \x in {11}
\filldraw[fill=black!100,draw=black!80] (\x,4.5) circle (2.5pt)    node[anchor=north] {\small };

\foreach \x in {10, 12} 
\filldraw[fill=black!100,draw=black!80] (\x,5.5) circle (2.5pt)    node[anchor=north] {\small};



\end{tikzpicture}
\end{center}
derived from the snake graph $\mathcal{G}_h(l_1,2,l_2,2,\dots,2,l_k)$ such that edge contraction of the $k-1$ vertical chains results in the hourglass graphs $\hourglass_i$, $i\in\{1,\dots,k-1\}$. Then, the elements of $M_{st}$ are given by:
   \begin{equation*}
     m_{ij} = \left\{\begin{array}{ll}
     \displaystyle{F_{l_i+1}F_{l_j+1}\prod_{r=i+1}^{j-1}F_{l_{r}}} & \mathrm{if\ } i<j; \\
     F_{l_{i}+2} & \mathrm{if\ } i=j; \\
     F_2 & \mathrm{if\ } i=j+1; \\
     F_0 & \mathrm{otherwise}.
	              \end{array} \right.
   \end{equation*}
\end{proposition}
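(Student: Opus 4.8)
The plan is to compute each entry $m_{ij}$ directly as the number of directed paths from $s_i$ to $t_j$ in $\mathcal{T}_\mathcal{G}$, since for the unweighted graph this is exactly the definition of $m_{ij}$; the determinant $\det M_{st}=m(\mathcal{G})$ then follows from Lemma~\ref{lem:LGV} and is consistent with Lemma~\ref{lemma:sum-prod-Fibonacci}. The backbone of the argument is that the necks $n_1,\dots,n_{k-1}$ of the $k-1$ hourglass subgraphs $\hourglass_1,\dots,\hourglass_{k-1}$ (Definition~\ref{def:hourglass graph}) act as bottlenecks between the $k$ horizontal chains. First I would fix the staircase geometry coming from $\mathcal{G}_h(l_1,2,l_2,2,\dots,2,l_k)$: each horizontal chain of length $l_r$ contributes a horizontal strip, namely the triangular snake graph of a ladder $L_{l_r}$; these strips are stacked so that by Remark~\ref{re:Basic_properties_T_G} the $x$-coordinate strictly increases along every arrow; and the neck $n_r$ is the unique vertex shared by the body (in strip $r$) and the head (in strip $r+1$) of $\hourglass_r$. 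Because all arrows point from left to right, the necks occur in strictly increasing $x$-order, and the only way a path can move between strip $r$ and strip $r+1$ is by passing through $n_r$.

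For the case $i<j$ I would show that every path $p\colon s_i\to t_j$ ascends monotonically through the necks $n_i,n_{i+1},\dots,n_{j-1}$, in this order. Indeed $p$ must change strip exactly $j-i$ times, and a downward move is impossible: after descending through some neck $n_r$ the path lands at the lower-right of the body, whose $x$-coordinate already exceeds that of any neck lying further left, so monotonicity of the $x$-coordinate forbids reaching $n_{r-1}$. Since each $n_r$ is a cut vertex, the count factorizes over the segments $s_i\to n_i$, the intermediate segments $n_{r-1}\to n_r$ traversing strip $r$ for $r=i+1,\dots,j-1$, and $n_{j-1}\to t_j$. Thus $m_{ij}$ equals the product of the numbers of paths of these three segment types.

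Next I would calibrate the three local counts as Fibonacci numbers. The number of source-to-sink paths inside the triangular snake graph of a ladder $L_l$ equals the number of perfect matchings $m(L_l)=F_{l+2}$, by Theorem~\ref{frac} (equivalently Proposition~\ref{prop:Ladder_Fibonacci}). Locating the entry and exit vertices within a strip then pins down the index shifts: a segment from the left source to the ascending neck (the apex of the corner tile) traverses $L_{l_r}$ but stops one rung early, contributing $F_{l_r+1}$; a segment that enters from the descending neck at the lower-left and exits at the ascending neck contributes $F_{l_r}$; and a full source-to-sink segment contributes $F_{l_r+2}$. Checking the small cases $l=1,2$ fixes these indices unambiguously. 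Substituting the values $F_{l_i+1}$ for $s_i\to n_i$, $F_{l_r}$ for each intermediate $n_{r-1}\to n_r$, and $F_{l_j+1}$ for $n_{j-1}\to t_j$ yields $m_{ij}=F_{l_i+1}F_{l_j+1}\prod_{r=i+1}^{j-1}F_{l_r}$ for $i<j$.

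Finally I would treat the diagonal and sub-diagonal. For $i=j$ a path cannot ascend and return (downward returns are impossible), so it stays inside strip $i$ and there are $m(L_{l_i})=F_{l_i+2}$ of them. For $i=j+1$ the source $s_{j+1}$ is the upper-left vertex of the head of $\hourglass_j$ and $t_j$ is the lower-right vertex of its body, so the only descending path is $s_{j+1}\to n_j\to t_j$, giving $m_{j+1,j}=1=F_2$. For $i>j+1$ a path would require at least two downward strip changes, which the monotone $x$-coordinate forbids, so $m_{ij}=0=F_0$. The main obstacle is the rigorous justification of the bottleneck/staircase geometry together with the exact calibration of the Fibonacci indices for the three segment types; once those local counts are established, the cut-vertex factorization assembles the stated formula.
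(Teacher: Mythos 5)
Your proposal is correct and follows essentially the same route as the paper's proof: both arguments treat the necks $n_i,\dots,n_{j-1}$ of the intermediate hourglasses as forced bottlenecks, factor the count of $s_i\to t_j$ paths into segments $s_i\to n_i$ (count $F_{l_i+1}$), $n_{r-1}\to n_r$ (count $F_{l_r}$), and $n_{j-1}\to t_j$ (count $F_{l_j+1}$), and handle the diagonal via the ladder-graph count $F_{l_i+2}$, the subdiagonal via the unique descending path ($F_2=1$), and the remaining entries as $F_0=0$. The only difference is one of detail: the monotone $x$-coordinate argument and the explicit Fibonacci-index calibration you spell out are asserted by inspection in the paper, so your write-up is a more rigorous rendering of the same proof rather than a different one.
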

\begin{proof}
    Since the vertical chains of the snake graph $\mathcal{G}_h(l_1,2,l_2,2,\dots,2,l_k)$ are all of length two, and their edge contraction results in an hourglass graph, the number of paths from vertex \( s_i \) to vertex \( t_i \), for \( i \in \{1, \dots, k\} \), is equal to the Fibonacci number \( F_{l_i+2} \), as the graph corresponds to a triangular snake graph associated to a ladder graph $L_{l_i}$. On the other hand, there is only \( F_2 = 1 \) path from vertex \( s_i \) to vertex \( t_j \) if \( i - j = 1 \), and \( F_0 = 0 \) if \( i - j > 1 \). 

    Finally, to address the remaining case, we note that a path from \( s_i \) to \( s_j \) for \( i < j \) must pass through the necks of the intermediate hourglass graphs $\hourglass_i, \dots, \hourglass_{j-1}$, so does not pass through the arrows incident to the source or sink vertices of these graphs. Specifically, there are \( F_{l_i+1} \) paths from \( s_i \) to the neck \( n_i \) of \( \hourglass_i \), \( F_{l_j+1} \) paths from \( n_{j-1} \) to \( t_j \), and \( F_{l_r} \) paths from \( n_{r-1} \) to \( n_r \) for each \( r \in \{i+1, \dots, j-1\} \). Therefore, the total number of paths is \( m_{ij} = \displaystyle{F_{l_i+1}F_{l_j+1}\prod_{r=i+1}^{j-1}F_{l_r}} \), as desired.
\end{proof}

\begin{remark} 
In Proposition~\ref{prop:General Fibonacci path matrix}, the associated triangular snake graph can be obtained by applying either the contraction defined in Definition~\ref{def:contracted_snake_graph} or the opposite contraction considered in Remark~\ref{re:opposite edge contraction}. For general snake graphs where not all edge contractions of vertical chains result in hourglass graphs, it is still possible to express path matrices in terms of Fibonacci numbers. However, these cases involve applying both the original and opposite contractions simultaneously. Consequently, some entries in the path matrices may be determinants of smaller path matrices, which themselves can be described using Fibonacci numbers.
\end{remark}

In \S~\ref{sec:ladder_graphs_hankel_and_general_formula}, we examined path matrices whose entries were expressed in terms of Fibonacci or Catalan numbers, with determinants also yielding Fibonacci numbers. We now explore an example where the determinant corresponds to a different well-known numerical sequence.

\begin{proposition}
Let $P_n$ be the $n$-th Pell number defined recursively by $P_{n}= 2P_{n-1} + P_{n-2}$ for $P_0 = 0$, $P_1 = 1$ and $n\geq 2$. Let $M_k=(m_{ij})$ and $M_k'=(m'_{ij})$ be the $k\times k$ matrices defined as follows:
   \begin{equation*}
     m_{ij} = \left\{\begin{array}{ll}
     F_{5} & \mathrm{if\ } i=j=1; \\
     F_{5}+F_{3} & \mathrm{if\ } i=j\in \{2,\dots,k\}; \\
     F_{4}F_{3}& \mathrm{if\ } i=1 \mathrm{\ and\ } j>1; \\
     (F_{4}+F_2)F_{3}& \mathrm{if\ } i\neq 1 \mathrm{\ and\ } i<j; \\
     F_2 & \mathrm{if\ } i=j+1; \\
     F_0 & \mathrm{otherwise}.
	              \end{array} \right. 
   \end{equation*} and    \begin{equation*}
     m'_{ij} = \left\{\begin{array}{ll}
     F_{5} & \mathrm{if\ } i=j=1; \\
     F_{5}+F_{3} & \mathrm{if\ } i=j\in \{2,\dots,k-1\}; \\
     F_{4}F_{3}& \mathrm{if\ } i=1 \mathrm{\ and\ } 1<j<k; \\
     (F_{4}+F_2)F_{3}& \mathrm{if\ } i\neq 1, \mathrm{\ } j\neq k \mathrm{\ and\ } i<j; \\
     F_{4}+F_2& \mathrm{if\ } 1<i<k \mathrm{\ and\ } j=k; \\
     F_{4} & \mathrm{if\ } i\in\{1,k\} \mathrm{\ and\ } j=k; \\
     F_2 & \mathrm{if\ } i=j+1; \\
     F_0 & \mathrm{otherwise}.
	              \end{array} \right. 
   \end{equation*}
   Then, 
   \[
   \det M_k = P_{2k+1} \quad \text{and} \quad \det M_k' = P_{2k}.
   \]
\end{proposition}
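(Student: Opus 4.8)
The plan is to prove $\det M_k = P_{2k+1}$ first, by a cofactor recurrence, and then deduce $\det M_k' = P_{2k}$ from it by a single column manipulation. Using the values $F_2=1,\,F_3=2,\,F_4=3,\,F_5=5$, the matrix $M_k$ is the $k\times k$ upper Hessenberg matrix whose diagonal is $(5,7,\dots,7)$, whose subdiagonal entries are all $1$, whose entries above the diagonal equal $6$ in the first row and $8$ in all later rows, and whose entries below the subdiagonal vanish. Expanding $\det M_k$ along its last row $(0,\dots,0,1,7)$ produces two minors: deleting row and column $k$ returns $M_{k-1}$, and deleting row $k$ and column $k-1$ returns the matrix obtained from $M_{k-1}$ by changing its corner entry from $7$ to $8$. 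By multilinearity the second minor equals $\det M_{k-1}+\det M_{k-2}$, whence
\[
\det M_k = 7\det M_{k-1}-\bigl(\det M_{k-1}+\det M_{k-2}\bigr)=6\det M_{k-1}-\det M_{k-2}.
\]
With the base values $\det M_1 = 5 = P_3$ and $\det M_2 = 29 = P_5$, this matches the recurrence $P_{2k+1}=6P_{2k-1}-P_{2k-3}$ for the odd-indexed Pell numbers, which I would obtain by eliminating the even-indexed terms from $P_n=2P_{n-1}+P_{n-2}$. An induction then gives $\det M_k = P_{2k+1}$.

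For the primed matrix I would observe that $M_k'$ and $M_k$ coincide in every column except the last, and that the last column of $M_k'$, namely $(3,4,\dots,4,3)^{T}$, satisfies
\[
(3,4,\dots,4,3)^{T}=\tfrac12\,(6,8,\dots,8,7)^{T}-\tfrac12\,e_k,
\]
where $e_k$ is the $k$-th standard basis vector and $(6,8,\dots,8,7)^{T}$ is the last column of $M_k$. Multilinearity of the determinant in the last column then yields
\[
\det M_k' = \tfrac12\det M_k-\tfrac12\det\!\left(M_k\text{ with last column }e_k\right)=\tfrac12\bigl(\det M_k-\det M_{k-1}\bigr),
\]
since expanding the second determinant along its last column leaves the leading $(k-1)\times(k-1)$ minor $M_{k-1}$. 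Using $\det M_k = P_{2k+1}$ together with the Pell relation $P_{2k+1}-P_{2k-1}=2P_{2k}$ gives $\det M_k' = P_{2k}$ at once, valid for $k\ge 2$.

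Conceptually, these identities fit the snake-graph dictionary of the paper. Pell numbers are exactly the numerators of the all-twos continued fractions: the numerator $p_n$ of $[\,\underbrace{2,\dots,2}_{n}\,]$ obeys the convergent recurrence $p_n=2p_{n-1}+p_{n-2}$ with $p_0=1,\ p_1=2$, so $p_n=P_{n+1}$. By Theorem~\ref{frac} this numerator is the number of perfect matchings of the staircase snake graph $\mathcal{G}[\,\underbrace{2,\dots,2}_{n}\,]$, and by the corollary to Theorem~\ref{the:Perfect_matchings_and_Routes} such a count equals the determinant of the path matrix of the associated triangular snake graph. Thus $M_k$ and $M_k'$ are path-matrix realizations of these counts, obtained as in Propositions~\ref{prop:Ladder_Fibonacci} and~\ref{prop:Hankel matrix of Catalan numbers} but with maximal straight pieces of length three, so that the relevant vertical contractions are not hourglasses (the regime noted after Proposition~\ref{prop:General Fibonacci path matrix}). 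This explains both the Fibonacci entries and the appearance of Pell numbers, and it is the route I would take if a purely combinatorial argument were preferred to the determinant recurrence.

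The main obstacle is the bookkeeping in the cofactor expansion of $\det M_k$: one must verify precisely that the minor obtained by deleting row $k$ and column $k-1$ is $M_{k-1}$ with only its corner entry altered from $7$ to $8$, so that it contributes $\det M_{k-1}+\det M_{k-2}$ and not something larger, and then seed the induction with the hand computations $\det M_1=5$, $\det M_2=29$. A secondary technical point is the degenerate case $k=1$ of $M_k'$, where the tabulated entry $m_{11}'=F_5$ conflicts with the boundary rule $i\in\{1,k\},\,j=k$; I would state the primed identity for $k\ge 2$ and take $\det M_1'=P_2=2$ as the initial value of the recurrence, in parallel with the corrected corner term $-E_{k,k}$ in Proposition~\ref{prop:Hankel matrix of Catalan numbers}.
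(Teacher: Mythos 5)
Your proof is correct, but it takes a genuinely different route from the paper. The paper proves the identity combinatorially: it recognizes $M_k$ (by inspection) as the path matrix of the triangular snake graph of $\mathcal{G}_h(3,3,\dots,3)$ with $2k-1$ chains, i.e.\ the snake graph of the continued fraction $[2,2,\dots,2]$ with $2k$ twos, so that by Lemma~\ref{lem:LGV} the determinant counts $k$-routes (equivalently, perfect matchings); it then splits the paths from $s_1$ to $t_1$ into three cases to show that these counts obey the Pell recurrence, and treats $M_k'$ identically with $2k-1$ twos. You instead work directly with the matrices: the Hessenberg cofactor expansion along the last row, together with multilinearity in the last column of the minor $M_k^{(k,k-1)}$, gives $\det M_k = 6\det M_{k-1}-\det M_{k-2}$, which is exactly the odd-index Pell recurrence, and the primed determinant falls out of the single column identity $\det M_k'=\tfrac12(\det M_k-\det M_{k-1})=P_{2k}$. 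What your approach buys is a self-contained and fully rigorous linear-algebra argument (the paper's "direct inspection" steps are left to the reader), plus a cleaner derivation of the primed case; what the paper's approach buys is the combinatorial meaning — the determinants literally count matchings of the all-twos snake graph, which is the conceptual point of the section — and your closing paragraph correctly reconstructs that interpretation. Two small points: in your cofactor step the corner replacement is $7\to 8$ only for $k\geq 3$ (for $k=2$ it is $5\to 6$), but since you seed the induction with the hand computations $\det M_1=5$ and $\det M_2=29$ this costs nothing; and your observation that the definition of $M_k'$ is internally inconsistent at $k=1$ (the rules for $i=j=1$ and for $i\in\{1,k\},\,j=k$ collide, and neither value gives $P_2=2$) is a genuine defect of the statement as written — restricting the primed identity to $k\geq 2$, as you do, is the right fix.
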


\begin{proof}
Consider the snake graph $\mathcal{G}$ associated to the continued fraction $[\underbrace{2,2,\cdots,2,2}_{(2k)-times}]$. This snake graph consists of $2k-1$ chains, each of length $3$; that is, $\mathcal{G}=\mathcal{G}_h(\underbrace{3,3,\cdots,3,3}_{(2k-1)-times})$. From a direct inspection of Figure~\ref{fig:Pell snake graph}, we can observe that the matrix $M_k$ corresponds to the path matrix associated to the triangular snake graph $\mathcal{T}_{\mathcal{G}}$. 
\begin{figure}[ht]
\begin{center}
\begin{tikzpicture}[scale=1.4]   
\draw[gray] (2.5,1) -- (2.5,1.5);
\draw[gray] (3,1) -- (3,1.5);
\draw[gray] (0.5,0) -- (2,0);
\draw[gray] (0.5,0.5) -- (2,0.5);
\draw[gray] (1.5,1.5) -- (3,1.5);
\draw[gray] (1.5,1) -- (3,1);
\draw[gray] (2,0) -- (2,1.5);
\draw[gray] (1.5,0) -- (1.5,1.5);
\draw[gray] (1,0) -- (1,0.5);
\draw[gray] (0.5,0) -- (0.5,0.5);

\draw[dashed, gray] (3,1.5) -- (3.5,2);

\end{tikzpicture} \hspace{1cm} \begin{tikzpicture}[domain=0:4, scale=1.4]

\draw[->, >=latex, gray] (0,0) -- (0.9,0); 
\draw[->, >=latex, gray] (0,0) -- (0.4,0.4); 
\draw[->, >=latex, gray] (0.5,0.5) -- (0.9,0.1);

\draw[->, >=latex, gray] (0.5,0.5) -- (1.4,0.5); 
\draw[->, >=latex, gray] (1,0) -- (1.4,0.4); 

\draw[->, >=latex, gray] (1,0) -- (1.9,0); 
\draw[->, >=latex, gray] (1.5,0.5) -- (1.9,0.1);

\draw[->, >=latex, gray] (1,1) -- (1.4,0.6);
\draw[->, >=latex, gray] (1,1) -- (1.9,1); 
\draw[->, >=latex, gray] (1.5,0.5) -- (1.9,0.9); 

\draw[->, >=latex, gray] (1,1) -- (1.4,1.4);
\draw[->, >=latex, gray] (1.5,1.5) -- (1.9,1.1);

\draw[->, >=latex, gray] (2,1) -- (2.4,1.4); 
\draw[->, >=latex, gray] (1.5,1.5) -- (2.4,1.5); 

\draw[->, >=latex, gray] (2,1) -- (2.9,1);
\draw[->, >=latex, gray] (2.5,1.5) -- (2.9,1.1);

\draw[dashed, gray] (2.5,1.5) -- (3,2);

\node (1) at (0,0) {$\bullet$};
\node (1) at (2,0) {$\bullet$};
\node (1) at (1,1) {$\bullet$};
\node (1) at (1.5,1.5) {$\bullet$};
\node (1) at (2.5,1.5) {$\bullet$};
\node (1) at (1,0) {$\bullet$};
\node (1) at (0.5,0.5) {$\bullet$};
\node (1) at (1.5,0.5) {$\bullet$};
\node (1) at (2,1) {$\bullet$};
\node (1) at (3,1) {$\bullet$};
\node (1) at (-0.25,0) {$s_1$};
\node (1) at (2.25,0) {$t_1$};
\node (1) at (0.75,1) {$s_2$};
\node (1) at (3.25,1) {$t_2$};

\filldraw(0.5,0.24)     node[anchor=center] {$1$};
\filldraw(1,0.26)     node[anchor=center] {$2$};
\filldraw(1.5,0.24)     node[anchor=center] {$3$};
\filldraw(1.5,0.76)     node[anchor=center] {$4$};
\filldraw(1.5,1.26)     node[anchor=center] {$5$};
\filldraw(2,1.26)     node[anchor=center] {$6$};
\filldraw(2.5,1.26)     node[anchor=center] {$7$};

\end{tikzpicture}
\end{center}
\caption{Pell Snake graph and its corresponding triangular snake graph.}
\label{fig:Pell snake graph}
\end{figure}
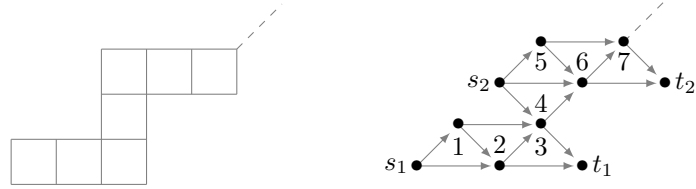
Furthermore, by directly examining the routes in 
$\mathcal{T}_{\mathcal{G}}$, we observe that the paths from 
$s_1$ to 
$t_1$ fall into one of the following three cases:
\begin{center}
\begin{tikzpicture}[domain=0:4, scale=1.4]

\draw[->, >=latex, red, thick] (0,0) -- (0.9,0); 
\draw[->, >=latex, lightgray, dashed] (0,0) -- (0.4,0.4); 
\draw[->, >=latex, lightgray, dashed] (0.5,0.5) -- (0.9,0.1);

\draw[->, >=latex, lightgray, dashed] (0.5,0.5) -- (1.4,0.5); 
\draw[->, >=latex, gray, thick] (1,0) -- (1.4,0.4); 

\draw[->, >=latex, gray, thick] (1,0) -- (1.9,0); 
\draw[->, >=latex, gray, thick] (1.5,0.5) -- (1.9,0.1);

\draw[->, >=latex, gray, thick] (1,1) -- (1.4,0.6);
\draw[->, >=latex, gray, thick] (1,1) -- (1.9,1); 
\draw[->, >=latex, gray, thick] (1.5,0.5) -- (1.9,0.9); 

\draw[->, >=latex, gray, thick] (1,1) -- (1.4,1.4);
\draw[->, >=latex, gray, thick] (1.5,1.5) -- (1.9,1.1);

\draw[->, >=latex, gray, thick] (2,1) -- (2.4,1.4); 
\draw[->, >=latex, gray, thick] (1.5,1.5) -- (2.4,1.5); 

\draw[->, >=latex, gray, thick] (2,1) -- (2.9,1);
\draw[->, >=latex, gray, thick] (2.5,1.5) -- (2.9,1.1);

\draw[dashed, gray, thick] (2.5,1.5) -- (3,2);

\node (1) at (0,0) {$\bullet$};
\node (1) at (2,0) {$\bullet$};
\node (1) at (1,1) {$\bullet$};
\node (1) at (1.5,1.5) {$\bullet$};
\node (1) at (2.5,1.5) {$\bullet$};
\node (1) at (1,0) {$\bullet$};
\node (1) at (0.5,0.5) {$\bullet$};
\node (1) at (1.5,0.5) {$\bullet$};
\node (1) at (2,1) {$\bullet$};
\node (1) at (3,1) {$\bullet$};
\node (1) at (-0.25,0) {$s_1$};
\node (1) at (2.25,0) {$t_1$};
\node (1) at (0.75,1) {$s_2$};
\node (1) at (3.25,1) {$t_2$};

\filldraw(0.5,0.24)     node[anchor=center] {$\textcolor{lightgray}{1}$};
\filldraw(1,0.26)     node[anchor=center] {$\textcolor{lightgray}{2}$};
\filldraw(1.5,0.24)     node[anchor=center] {$3$};
\filldraw(1.5,0.76)     node[anchor=center] {$4$};
\filldraw(1.5,1.26)     node[anchor=center] {$5$};
\filldraw(2,1.26)     node[anchor=center] {$6$};
\filldraw(2.5,1.26)     node[anchor=center] {$7$};

\end{tikzpicture} \hspace{0.6cm} \begin{tikzpicture}[domain=0:4, scale=1.4]

\draw[->, >=latex, lightgray, dashed] (0,0) -- (0.9,0); 
\draw[->, >=latex, red, thick] (0,0) -- (0.4,0.4); 
\draw[->, >=latex, red, thick] (0.5,0.5) -- (0.9,0.1);

\draw[->, >=latex, lightgray, dashed] (0.5,0.5) -- (1.4,0.5); 
\draw[->, >=latex, gray, thick] (1,0) -- (1.4,0.4); 

\draw[->, >=latex, gray, thick] (1,0) -- (1.9,0); 
\draw[->, >=latex, gray, thick] (1.5,0.5) -- (1.9,0.1);

\draw[->, >=latex, gray, thick] (1,1) -- (1.4,0.6);
\draw[->, >=latex, gray, thick] (1,1) -- (1.9,1); 
\draw[->, >=latex, gray, thick] (1.5,0.5) -- (1.9,0.9); 

\draw[->, >=latex, gray, thick] (1,1) -- (1.4,1.4);
\draw[->, >=latex, gray, thick] (1.5,1.5) -- (1.9,1.1);

\draw[->, >=latex, gray, thick] (2,1) -- (2.4,1.4); 
\draw[->, >=latex, gray, thick] (1.5,1.5) -- (2.4,1.5); 

\draw[->, >=latex, gray, thick] (2,1) -- (2.9,1);
\draw[->, >=latex, gray, thick] (2.5,1.5) -- (2.9,1.1);

\draw[dashed, gray, thick] (2.5,1.5) -- (3,2);

\node (1) at (0,0) {$\bullet$};
\node (1) at (2,0) {$\bullet$};
\node (1) at (1,1) {$\bullet$};
\node (1) at (1.5,1.5) {$\bullet$};
\node (1) at (2.5,1.5) {$\bullet$};
\node (1) at (1,0) {$\bullet$};
\node (1) at (0.5,0.5) {$\bullet$};
\node (1) at (1.5,0.5) {$\bullet$};
\node (1) at (2,1) {$\bullet$};
\node (1) at (3,1) {$\bullet$};
\node (1) at (-0.25,0) {$s_1$};
\node (1) at (2.25,0) {$t_1$};
\node (1) at (0.75,1) {$s_2$};
\node (1) at (3.25,1) {$t_2$};

\filldraw(0.5,0.24)     node[anchor=center] {$\textcolor{lightgray}{1}$};
\filldraw(1,0.26)     node[anchor=center] {$\textcolor{lightgray}{2}$};
\filldraw(1.5,0.24)     node[anchor=center] {$3$};
\filldraw(1.5,0.76)     node[anchor=center] {$4$};
\filldraw(1.5,1.26)     node[anchor=center] {$5$};
\filldraw(2,1.26)     node[anchor=center] {$6$};
\filldraw(2.5,1.26)     node[anchor=center] {$7$};

\end{tikzpicture} \hspace{0.6cm} \begin{tikzpicture}[domain=0:4, scale=1.4]

\draw[->, >=latex, lightgray, dashed] (0,0) -- (0.9,0); 
\draw[->, >=latex, red, thick] (0,0) -- (0.4,0.4); 
\draw[->, >=latex, lightgray, dashed] (0.5,0.5) -- (0.9,0.1);

\draw[->, >=latex, red, thick] (0.5,0.5) -- (1.4,0.5); 
\draw[->, >=latex, lightgray, dashed] (1,0) -- (1.4,0.4); 

\draw[->, >=latex, lightgray, dashed] (1,0) -- (1.9,0); 
\draw[->, >=latex, red, thick] (1.5,0.5) -- (1.9,0.1);

\draw[->, >=latex, lightgray, dashed] (1,1) -- (1.4,0.6);
\draw[->, >=latex, gray, thick] (1,1) -- (1.9,1); 
\draw[->, >=latex, lightgray, dashed] (1.5,0.5) -- (1.9,0.9); 

\draw[->, >=latex, gray, thick] (1,1) -- (1.4,1.4);
\draw[->, >=latex, gray, thick] (1.5,1.5) -- (1.9,1.1);

\draw[->, >=latex, gray, thick] (2,1) -- (2.4,1.4); 
\draw[->, >=latex, gray, thick] (1.5,1.5) -- (2.4,1.5); 

\draw[->, >=latex, gray, thick] (2,1) -- (2.9,1);
\draw[->, >=latex, gray, thick] (2.5,1.5) -- (2.9,1.1);

\draw[dashed, gray, thick] (2.5,1.5) -- (3,2);

\node (1) at (0,0) {$\bullet$};
\node (1) at (2,0) {$\bullet$};
\node (1) at (1,1) {$\bullet$};
\node (1) at (1.5,1.5) {$\bullet$};
\node (1) at (2.5,1.5) {$\bullet$};
\node (1) at (1,0) {$\bullet$};
\node (1) at (0.5,0.5) {$\bullet$};
\node (1) at (1.5,0.5) {$\bullet$};
\node (1) at (2,1) {$\bullet$};
\node (1) at (3,1) {$\bullet$};
\node (1) at (-0.25,0) {$s_1$};
\node (1) at (2.25,0) {$t_1$};
\node (1) at (0.75,1) {$s_2$};
\node (1) at (3.25,1) {$t_2$};

\filldraw(0.5,0.24)     node[anchor=center] {$\textcolor{lightgray}{1}$};
\filldraw(1,0.26)     node[anchor=center] {$\textcolor{lightgray}{2}$};
\filldraw(1.5,0.24)     node[anchor=center] {$\textcolor{lightgray}{3}$};
\filldraw(1.5,0.76)     node[anchor=center] {$\textcolor{lightgray}{4}$};
\filldraw(1.5,1.26)     node[anchor=center] {$5$};
\filldraw(2,1.26)     node[anchor=center] {$6$};
\filldraw(2.5,1.26)     node[anchor=center] {$7$};

\end{tikzpicture}    
\end{center}
The first two cases (from left to right) correspond to the number of routes in the triangular snake graph associated to $\mathcal{G}_v[\underbrace{3,3,\cdots,3,3}_{(2k-2)-times}]$, while the remaining case corresponds to the number of routes in the triangular snake graph associated to $\mathcal{G}_h[\underbrace{3,3,\cdots,3,3}_{(2k-3)-times}]$. Therefore, the number of routes in 
$\mathcal{T}_{\mathcal{G}}$ (or perfect matchings in $\mathcal{G}$) satisfies the same recurrence relation as the Pell numbers. Similarly, if we consider the continued fraction $[\underbrace{2,2,\cdots,2,2}_{(2k-1)-times}]$, the associated path matrix is $M'_k$, whose determinant corresponds to the previously mentioned Pell number. Thus, the desired result is proved.
\end{proof}

\begin{example} Consider the snake graphs $\mathcal{G}'$ and $\mathcal{G}$ associated to the continued fractions $[2,2,2,2,2]$ and $[2,2,2,2,2,2]$, respectively. The number of perfect matchings of $\mathcal{G}'$ and $\mathcal{G}$ are given by
\[ 
m(\mathcal{G}')=\det M_3'=\left| \begin{array}{ccc}
 F_5 & F_4F_3 & F_4 \\ 
 F_2 & F_5+F_3 &  F_4+F_2 \\
 F_0 & F_2 &  F_4
\end{array} \right| =P_6=70,
\]
and
\[ 
m(\mathcal{G})=\det M_3=\left| \begin{array}{ccc}
 F_5 & F_4F_3 & F_4F_3 \\ 
 F_2 & F_5+F_3 &  (F_4+F_2)F_3  \\
 F_0 & F_2 &  F_5+F_3
\end{array} \right| =P_7=169.
\]
\end{example}

\section{Future work}
This work has showed that, for all snake graphs, the number of perfect matchings can be expressed as a sum of products of Fibonacci numbers. Additionally, we have explored how certain well-known sequences, such as the Fibonacci and Pell sequences, can be characterized through determinants of matrices with Fibonacci number entries. A natural question arises: can other sequences be described similarly? For instance, the Fibonacci and Pell numbers are particular cases within the broader Markov sequence. It would be interesting to study whether there are families of matrices that can effectively describe this sequence in an analogous manner.\par 
Furthermore, previous works such as \cite{CS13} and \cite{CS18} established a relationship between the continued fraction associated with a snake graph, the number of perfect matchings, and the framework of cluster algebras. Specifically, if $\mathcal{G}$ is the snake graph of a cluster variable within a cluster algebra arising from a surface, then each cluster variable $\textbf{x}$ is represented by a formula involving the perfect matchings of 
$\mathcal{G}$. Through the bijections introduced in this paper, we can reinterpret these expressions as products of arrow labels within paths of the $k$-route associated with a perfect matching, combined with labels of contraction edge vertices that are not part of any path in the route. This suggests a potential avenue for further exploration, where such combinatorial interpretations may offer new insights into the algebraic structures underlying cluster variables and related sequences.
\subsection*{Acknowledgements}

I would like to express my sincere gratitude to Alfredo Nájera Chávez for his invaluable comments, suggestions, and corrections on earlier drafts of this work. Additionally, I thank Timothy Magee for his insightful observations, particularly regarding the use of Hankel notation in Proposition~\ref{prop:Catalan_Fibonacci}. Their contributions have significantly improved the quality of this paper.


\end{document}